\def\bbC{{\mathbb C}}
\def\bbD{{\mathbb D}}
\def\bbN{{\mathbb N}}
\def\bbR{{\mathbb R}}
\def\bbZ{{\mathbb Z}}
\def\cB{{\mathcal B}}
\def\cE{{\mathcal E}}
\def\cF{{\mathcal F}}
\def\cH{{\mathcal H}}
\def\cS{{\mathcal S}}
\def\cX{{\mathcal X}}
\def\cY{{\mathcal Y}}
\def\Re{\operatorname{Re}}
\def\la{\langle}
\def\ra{\rangle}
\def\eps{\varepsilon}
\def\vp{\varphi}
\def\ov{\overline}
\def\ms{\medskip}
\def\Hol{\operatorname{Hol}}
\def\supp{\operatorname{supp}}
\def\sinc{\operatorname{sinc}}
\def\itb{\item[{\tiny $\bullet$}]}
\def\tbi{\itb}
\def\BMO{\operatorname{BMO}}
\def\BMOA{\operatorname{BMOA}}
\def\VMO{\operatorname{VMO}}
\def\VMOA{\operatorname{VMOA}}
\def\vp{\varphi}
\def\Hol{\operatorname{Hol}}
\newtheorem{thm}{Theorem}[section]
\newtheorem{prop}[thm]{Proposition}
\newtheorem{cor}[thm]{Corollary}
\newtheorem{lem}[thm]{Lemma}
\newtheorem{defn}[thm]{Definition}
\newtheorem{remarks}[thm]{Remarks}
\newtheorem{theorem}{Theorem}
\begin{document}

\title[Duality, BMO and Hankel operators on Bernstein spaces]{Duality, BMO and Hankel operators on Bernstein spaces} 
\author[C. Bellavita, M. M. Peloso]{Carlo Bellavita, 
  Marco M. Peloso}

 \address{Dipartimento di Matematica``F. Enriques''\\
Universit\`a degli Studi di Milano\\
Via C. Saldini 50\\
I-20133 Milano}
\email{carlo.bellavita@gmail.com}

\address{Dipartimento di Matematica ``F. Enriques''\\
Universit\`a degli Studi di Milano\\
Via C. Saldini 50\\
I-20133 Milano}
\email{marco.peloso@unimi.it}

\keywords{Bernstein spaces, Paley--Wiener spaces, BMO, duality, Hankel
operators}
\subjclass[2000]{30D15, 30H35, 47B35}
\thanks{M. Peloso is member of the Gruppo Nazionale
per l’Analisi Matematica, la Probabilità e le loro Applicazioni (GNAMPA) of
the Istituto Nazionale di Alta Matematica (INdAM). He was partially
supported by the 2022 INdAM–GNAMPA project {\em Holomorphic Functions in
One and Several Complex Variables} (CUP\_E55F22000270001).}

 \begin{abstract}
In this paper we deal with the problem of describing the dual space $(\cB^1_\kappa)^*$ of
the Bernstein space $\cB^1_\kappa$, that is the space of entire
functions of exponential type (at most) $\kappa>0$ whose restriction to the real
line is Lebesgue integrable.  We provide several characterizations,
showing that such dual space can be described as a quotient of the space
of entire functions of exponential type $\kappa$ whose restriction to
the real line are in a suitable $\BMO$-type space, or as the space of
symbols $b$ for which the Hankel opertor $H_b$ is bounded on the
Paley--Wiener space $\cB^2_{\kappa/2}$.  We also provide a
characterization of $(\cB^1_\kappa)^*$ as the $\BMO$ space w.r.t. the
Clark measures of the inner function $e^{i\kappa z}$ on the upper
half-plane, in analogy with the known description of the dual of
backward-shift invariant $1$-spaces on the torus.
Furthermore, we show that the orthogonal projection $P_\kappa:L^2(\bbR)\to \cB^2_\kappa$
induces a bounded operator form  $L^\infty(\bbR)$ onto
$(\cB^1_\kappa)^*$.

Finally, we show that $\cB^1_\kappa$ is the dual space of the suitable
$\VMO$-type space or as the space of
symbols $b$ for which the Hankel opertor $H_b$ on the
Paley--Wiener space $\cB^2_{\kappa/2}$ is compact.
\end{abstract}
\maketitle

\section{Introduction and statement of the main results}

Let $\cE_\kappa$
be the space of entire functions of exponential type at most $\kappa$, 
\begin{equation}\label{exp-type}
\cE_\kappa = \big\{ f\in\Hol(\bbC): \, \text{for every } \eps>0\
\text{there exists\ } C_\eps>0  \text{ such that \ } |f(z)|\le C_\eps
e^{(\kappa+\eps)|z|} \big\} \,.
\end{equation}
For $p\in(0,\infty]$, the Bernstein space $\cB^p_\kappa$ is defined as
$$
\cB^p_\kappa=\left\{ f\in \cE_\kappa: f_0\in L^p, \, \|f\|_{\cB^p_\kappa} = \|f_0\|_{L^p} \right\}
$$
where $f_0:=f|_{\bbR}$ denotes the restriction of $f$ to the real line
and $L^p$ is the standard Lebesgue space.
When $p=2$, the space $\cB^2_\kappa$ is the
classical Paley--Wiener space $PW_\kappa$.  We remark that the spaces $\cB^p_\kappa$
are often also called Paley--Wiener spaces and denoted as
$PW^p_\kappa$.  We select to refer them as Bernstein spaces, see the
discussion at the end of this introduction. \ms

Let 
$\cS$  and
$\cS'$ denote the space of Schwartz functions and
the space of tempered
distributions on the real line $\bbR$, resp.
For $f\in\cS'$ we
 equivalently denote by $\widehat f$ or $\cF f$ the Fourier
 transform, that for  $f\in\cS$ it is given by
 \begin{equation*}
   \widehat f(\xi)
   =  \frac{1}{\sqrt{2\pi}} \int_{\bbR} f (x)e^{-ix\xi}\, dx. 
 \end{equation*}
The Fourier transform $\cF$ is an isomorphism of $\cS$
onto itself with inverse given by
$$
\cF^{-1}g(x) =
\frac{1}{\sqrt{2\pi}}\int_{\bbR} g(\xi)e^{ix\xi}\, d\xi ,
$$
and by Plancherel Theorem, 
$\cF$ extends to a surjective
isometry $\cF: L^2(\bbR)\to L^2(\bbR)$. 
 
R. Paley and N. Wiener~\cite{PW} showed that $\cB^2_\kappa$ can be
characterized as the subspace of 
$L^2$ consisting of the functions
whose Fourier transform is supported in
$[-\kappa,\kappa]$.  There exists an analogous
characterization of the Bernstein space $\cB^p_\kappa$ as the
subspace of $L^p$ consisting of the functions 
whose Fourier transforms, as tempered distributions, are supported in
$[-\kappa,\kappa]$, see e.g.~\cite{Andersen}.
The
Paley--Wiener space $\cB^2_\kappa$ is a
reproducing kernel Hilbert space with reproducing kernel $K_w$ given by a
dilated of the normalized
sinc-function, namely
$$
K_w(z)={\textstyle \sqrt{\frac\kappa\pi} \sinc\big(\frac\kappa\pi
(z-\ov w)\big)}=: K(z,w) ,
$$
where we write $\sinc(z)=\frac{\sin(\pi z)}{\pi z}$.  The set
$\big\{\frac{1}{\sqrt{2\kappa}}e^{-i\frac\pi\kappa n(\cdot)}:\,
n\in\bbZ\big\}$ is an orthonormal basis of $L^2[-\kappa,\kappa]$, so
that
$$
\big\{ {\textstyle \sqrt{\frac\kappa\pi}}  \sinc\big( {\textstyle
  \frac\kappa\pi (\cdot-\frac\pi\kappa n)} \big) :  \,
n\in\bbZ\big\} = \big\{ K_{\frac\pi\kappa n}:\, n\in\bbZ\big\} 
$$
is an orthonormal basis of $\cB^2_\kappa$.  Notice in particular that
this is an orthonormal basis of reproducing kernels.  In particular,
for  $f\in \cB^2_\kappa$ we have the reproducing
formula\footnote{We denote by $\la\cdot\,|\,\cdot\ra$ a sesquilinear
  pairing, and by $\la\cdot,\cdot\ra$ a bilinear pairing.}
\begin{equation}\label{repr-form:eq}
f(z) = \sum_{n\in\bbZ} \la f\,|\,K_{\frac\pi\kappa n}\ra
K_{\frac\pi\kappa n}(z) = \sum_{n\in\bbZ} f \big(n{\textstyle
  \frac\pi\kappa}\big) \sinc\big( {\textstyle\frac\kappa\pi
  (z-\frac\pi\kappa n)} \big),
\end{equation}
and the renowed Whittaker--Kotelnikov--Shannon formula
\begin{equation}\label{Shannon:eq}
\| f\|_{\cB^2_\kappa}^2 = \sum_{n\in\bbZ} \big| f \big({\textstyle
  \frac\pi\kappa}n\big) \big|^2.
\end{equation}
When $p\neq2$, analouges of formulas~\eqref{repr-form:eq}
and~\eqref{Shannon:eq} hold.  Precisely, if $p\in(1,\infty)$ and
$a=(a_n)\in\ell^p(\bbZ)$, then
\begin{equation}\label{Bp-series-dev:eq}
f(z): = \sum_{n\in\bbZ} a_n {\textstyle \sqrt{\frac\kappa\pi} \sinc\big(\frac\kappa\pi z-n\big) }
\end{equation}
belongs to $\cB^p_\kappa$ and moreover $f(\frac\kappa\pi n)=a_n$, for
$n\in\bbZ$.  The convergence of the above series is in
$\cB^p_\kappa$-norm and therefore also uniformly on compact subsets of $\bbC$.
Conversely, if $f\in\cB^p_\kappa$, then
$a := \big(f\big(\frac\kappa\pi n\big)\big)_{n\in\bbZ}\in\ell^p(\bbZ)$ and
$$
f(z) = \sum_{n\in\bbZ} f(\frac\kappa\pi n) {\textstyle \sqrt{\frac\kappa\pi} \sinc\big(\frac\kappa\pi z-n\big) }.
$$
Moreover, $\|f\|_{\cB^p_\kappa} \asymp \| a\|_{\ell^p(\bbZ)}$,
\cite{P-P}.  Hence, when $p\in (1,\infty)$, the correspondence $\cB^p_\kappa\ni f\mapsto
\big(f(\frac\kappa\pi n)\big)\in\ell^p$ is an isomorphism of
$\cB^p_\kappa$ onto $\ell^p$.
We also 
refer to~\cite{PW} and~\cite[Lectures 20-21]{Levin-Lectures} for more on
the spaces $\cB^p_\kappa$, with $p\in[1,\infty]$. 

Let $P_\kappa:L^2\to\cB^2_\kappa$ be the orthogonal projection.  Then, for
$f\in L^2$, 
$$
P_\kappa f(x) = f* {\textstyle \sqrt{\frac\kappa\pi} \sinc\big(\frac\kappa\pi \cdot\big) }(x).
$$
From the boundedness of the Hilbert trasform on $L^p$,
$p\in(1,\infty)$, it follows at once that the dual of $\cB^p_\kappa$
can be identified with  $\cB^{p'}_\kappa$, under the standard (bilinear)
pairing
of duality
$$
L_g: \cB^p_\kappa \ni f \mapsto \int_\bbR f  g\, dm,
$$
for $g\in\cB^{p'}_\kappa$, $p\in(1,\infty)$,
$\frac1p+\frac{1}{p'}=1$.  It is worth pointing out that the spaces $\cB^p_\kappa$ are endowed
with an involution $\cB^p_\kappa\ni f\mapsto f^\#\in
\cB^p_\kappa$, where $f^\#(z):=\ov{f(\ov z)}$. Thus, the above pairing
of duality can be equivalently replaced by the sesquilinear pairing $\la f\,|\, g\ra =
\int_\bbR f\ov g\, dm$.  Moreover, by~\eqref{Bp-series-dev:eq}, for
$f\in \cB^p_\kappa$, $g\in \cB^{p'}_\kappa$,
$$
\int_\bbR f(x) g(x)\, dx = \sum_{n\in\bbZ} {\textstyle
  f\big(\frac\kappa\pi n\big) g\big(\frac\kappa\pi n\big)}.
$$

In~\cite{Eoff}, C.\
Eoff extended the above isomorphisms
between $\cB^p_\kappa$ and $\ell^p$ 
to the cases
$p\in(0,1]$ --  however in this discussion, we limit ourselves to the case
$p=1$.  
Let $H_{d,\alpha}$ denote the $\alpha$-translated discrete Hilbert transform\footnote{The restriction to summing on $\bbZ\setminus\{n\}$ is required only
when $\alpha=0$.}
\begin{equation}\label{Hd:def}
H_{d,\alpha}a (n) =\sum_{k\neq n} \frac{a_k}{n-k+\alpha},
\end{equation}
where $a=(a_n) \in \ell^1$. Then,
we
define
$H^1=H^1(\bbZ)$ the Hardy space on $\bbZ$ as
$$
H^1 = \big\{a=(a_n)_{n\in\bbZ}:\, a,\, H_{d,\alpha}a \in \ell^1 \big\},
$$
for one (equivalently, all) $\alpha\in[0,1)$, 
with norm $\|a\|_{H^1} = \|a\|_{\ell^1}+ \|H_{d,\alpha} a\|_{\ell^1}$.
Eoff proved that if $f\in \cB^1_\pi$ then $a :=
\big((-1)^nf\big(\frac\kappa\pi n)\big)_{n\in\bbZ}\in H^1$, and
conversely, it $a=(a_n)\in H^1$, there exists a unique $f\in \cB^1_\pi$
such that $(-1)^n f\big(\frac\kappa\pi n\big) =a_n$, for all
$n\in\bbZ$. Moreover, $\|f\|_{\cB^1_\pi}\asymp 
\|\big((-1)^nf\big(\frac\kappa\pi\cdot )\big) \|_{H^1}$.
The dual of $H^1(\bbZ)$ can be identified with $\BMO(\bbZ)$ 
with the pairing of duality
$$
\la h,b\ra := \sum_{n\in\bbZ} h_n b_n,
$$
where $h=(h_n)\in H^1(\bbZ)$, $b=(b_b)\in \BMO(\bbZ)$.  The above
pairing of duality must be interpreted in a weak sense, since the sum may
not converge absolutely, in general, using the atomic characterization 
of $H^1(\bbZ)$ proved in ~\cite{Boza-Carro}.
Moreover, the $\BMO(\bbZ)$-norm  is equivalent to the norm of duality
with $H^1(\bbZ)$.

Given the results cited above, it is fairly straightforward to obtain
a description of the dual of $\cB^1_\kappa$ as a space of sequences.
Precisely, given any $L\in (\cB^1_\kappa)^*$, there exists a unique
$b\in \BMO(\bbZ)$ such that
$$
L(f) = \la \big((-1)^nf(n)\big),b\ra,
$$
with $\|L\|_{(\cB^1_\kappa)^*} \asymp \|b\|_{\BMO(\bbZ)}$, and
conversely any $b\in \BMO(\bbZ)$ defines a continuous linear functional
on $H^1(\bbZ)$.  Such a description of the dual space
$(\cB^1_\kappa)^*$ is analogous with the characterization of the dual
space of coinvariant subspaces of $H^1(\bbD)$~\cite{Bessonov}, where
$\bbD$ denotes the unit disk, and $H^1(\bbD)$ is the classical Hardy
space on $\bbD$; see also~\cite{student-Partington}.
\ms

The main goal of this work is to describe the dual of $\cB^1_\kappa$
as a space of entire functions, extending the pairing of duality
between $\cB^p_\kappa$  and $\cB^{p'}_\kappa$, $p\in(1,\infty)$, $\frac1p+\frac{1}{p'}=1$.
One of  characterizations that we obtain involves, in a rather natural
and classical way, the symbols of bounded Hankel operators, that we
now describe.  Given a locally integrable function $\vp$ on $\bbR$,
let $H_\vp$ denote the Hankel operator on $\cB^2_\kappa$   (that is, the
Paley--Wiener space $PW_\kappa$) with symbol $\vp$, given by, 
\begin{equation}\label{Hankel-op:eq}
  H_\vp(f)= P_\kappa (\vp\ov f) : \cB^2_\kappa\to \cB^2_\kappa,
\end{equation}
initially defined for $f$ in a dense subspace of $\cB^2_\kappa$. Such operators where first studied by R.\
Rochberg~\cite{Rochberg} who obtained a characterization of bounded
and compact operators, in terms of properties of their symbols.  We
refer to  Section~\ref{Pf-Thm2:sec} for a discussion of these results.
\ms

Before stating our main results, we need a few more definitions and remarks.
Since, for $\lambda>0$, the mapping $\cB^p_\kappa \ni f\mapsto
\lambda^{-1/p} f(\lambda\cdot)\in \cB^p_{\lambda\kappa}$ is an
isometric isomorphism, without loss of generality, we may take
$\kappa=\pi$.
Since there is no loss of generality in considering
only the case $\kappa=\pi$, we are going to do so throught the
paper.

Let $\BMO(\bbR)$ denote the space of functions (modulo
constants) of bounded mean oscillation:
$$
\BMO(\bbR) = \Big\{ \vp \in L^1_{\rm loc}(\bbR):\, \|\vp\|_{\BMO(\bbR)}:= \sup_{I\subset\bbR}
\frac{1}{|I|} \int_I |\vp -\vp_I|\, dm <\infty \
\Big\},
$$
where $I\subset\bbR$ is any bounded interval,
$\vp_I=\frac{1}{|I|}\int_I \vp$ is the average of $\vp$ over
$I$, and $|E|$ denotes the Lebesgue measure of a measurable set
$E$.\footnote{Adopting the same notation for $\BMO(\bbR)$ and
  $\BMO(\bbZ)$ should cause no confusion.}
We also define the space of analytic $\BMO$,
$$
\BMOA = \big\{ \vp\in \BMO(\bbR):\, \supp (\widehat
\vp)\subseteq(0,\infty)\big\}.
$$
Notice that $\ov{\BMOA}=\big\{ \vp\in \BMO(\bbR):\, \supp (\widehat
\vp)\subseteq(-\infty,0)\big\}$.

We define a space of equivalence classes of functions in $\cE_\pi$.
\begin{defn}\label{Yspace:def}{\rm
Let
$$
    \BMO(e^{-i\pi z}) = \Big\{ f\in \cE_\pi:
      \, e^{i\pi(\cdot)}f_0
   \in \BMOA,\,  \, e^{-i\pi(\cdot)}f_0
   \in \ov{\BMOA} \Big\}\Big/\operatorname{span}\{e^{\pm
     i\pi(\cdot)}\Big\} ,
   $$
    with norm
\begin{equation}\label{Y-norm:def}
      \|f\|_{\BMO(e^{-i\pi z})}=  \inf_{c\in\bbC} \| e^{i\pi(\cdot)}f_0 + ce^{i2\pi(\cdot)}\|_{\BMOA}
 + \inf_{c\in\bbC} \| e^{-i\pi(\cdot)}f_0 + ce^{-i2\pi(\cdot)}\|_{\ov{\BMOA}}    .
\end{equation}
  }
\end{defn}  

\begin{remarks}{\rm \ 
    
(1) Observe that the definition of $\BMO(e^{-i\pi z})$ makes sense, since the
functions $g_\pm:=e^{\pm i\pi z}\in\cE_\pi$ and that $e^{\pm i\pi
  z}g_\pm \in L^\infty \hookrightarrow\BMO(\bbR)$.

(2) Notice that if $f\in\BMO(e^{-i\pi z})$, then $f_0\in\cS'$, and by the well-known properties
of $\BMO(\bbR)$, $\int_\bbR|f(x)|\, \frac{dx}{1+x^2}<\infty$.

(3) With the given norm, $\BMO(e^{-i\pi z})$ is a Banach space, see Theorem ~\ref{Y-Banach:prop}.
} \ms
\end{remarks}

Our first result is the identification of $(\cB^1_\pi)^*$ with the
spaces $\BMO(e^{-i\pi z})$.  Recall that,
$$
(\cB^1_\pi)_{|_\bbR} =\big\{ g\in L^1(\bbR): \supp(\widehat g)\subseteq[-\pi,\pi]\big\},
$$
see e.g.~\cite{Andersen,Bernstein-CR}. This implies that, since if
$f\in\cB^1_\pi$, $\widehat{f_0}(\pm\pi)=0$, 
the bounded functions $e^{\pm i\pi(\cdot)}$ annihilate
$\cB^1_\pi$. Moreover, we point out the subspace $\{f\in \cB^1_\pi:\, f_0\in\cS\}$
is dense in $\cB^1_\pi$, see Lemma~\ref{density-lem}.

\begin{theorem}\label{main-thm1bis}
{\rm  (1)} The projection $P_\pi$ induces a surjective, bounded linear operator
  $P_\pi :L^\infty\to\BMO(e^{-i\pi z})$ such that
  $\|f\|_{\BMO(e^{-i\pi z})}\approx \inf\{ \|\vp\|_{L^\infty}:\, P_\pi \vp=f\}$.

{\rm  (2)} The space $\BMO(e^{-i\pi z})$ can be identified with $(\cB^1_\pi)^*$. Precisely,
each $f\in \BMO(e^{-i\pi z})$ defines a bounded linear functional on $\cB^1_\pi$ by
setting
$$
L_f(h)= \int_\bbR h(x) f(x)\, dx
$$
for any $h\in \cB^1_\pi$, 
and conversely, 
  for each $L\in (\cB^1_\pi)^*$ there exists a unique
$f\in \BMO(e^{-i\pi z})$ such that $L=L_f$.  Moreover, 
$\|L_f\|_{(\cB^1_\pi)^*}\approx\|f\|_{\BMO(e^{-i\pi z})}$.  
\end{theorem}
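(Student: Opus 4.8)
\emph{Strategy.} The plan is to reduce both assertions to classical facts about the half-plane and to Hahn--Banach. The structural point is that multiplying by $e^{\pm i\pi(\cdot)}$ shifts Fourier supports: $f\in\cB^1_\pi$ iff $f\in L^1$ with $\widehat{f_0}$ supported in $[-\pi,\pi]$, so $e^{i\pi(\cdot)}h\in H^1(\bbR)$ (the analytic Hardy space) for $h\in\cB^1_\pi$ with $\|e^{i\pi(\cdot)}h\|_{H^1}\lesssim\|h\|_{\cB^1_\pi}$, and $P_\pi$ is, after conjugation by $e^{\pm i\pi(\cdot)}$, the Fourier multiplier by the indicator of a half-line or of a bounded interval. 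On this side the relevant classical input is: the analytic (Riesz) projection maps $L^\infty(\bbR)$ boundedly \emph{onto} $\BMOA$, and the $H^1$--$\BMO$ pairing on $\bbR$ is bounded.

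\emph{Part (1).} First I would give a precise meaning to $P_\pi\vp$ for $\vp\in L^\infty$ --- it is well defined as an element of $\BMO(\bbR)$ only modulo $\operatorname{span}\{e^{\pm i\pi(\cdot)}\}$, owing to the jumps of $\chi_{[-\pi,\pi]}$ at $\pm\pi$ --- and check that then $e^{i\pi(\cdot)}P_\pi\vp\in\BMOA$ and $e^{-i\pi(\cdot)}P_\pi\vp\in\ov{\BMOA}$, i.e. that $P_\pi$ does land in $\BMO(e^{-i\pi z})$, with $\|P_\pi\vp\|_{\BMO(e^{-i\pi z})}\lesssim\|\vp\|_\infty$. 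For surjectivity, given $f\in\BMO(e^{-i\pi z})$ I would note that its two defining conditions force $\widehat{e^{i\pi(\cdot)}f_0}$ to be supported in the \emph{bounded} interval $[0,2\pi]$ (because $\widehat{e^{-i\pi(\cdot)}f_0}(\xi)=\widehat{e^{i\pi(\cdot)}f_0}(\xi+2\pi)$ must be supported in $(-\infty,0]$); using surjectivity of the analytic projection onto $\BMOA$, pick $\psi\in L^\infty$ whose analytic projection is $e^{i\pi(\cdot)}f_0$ with $\|\psi\|_\infty\lesssim\|f\|_{\BMO(e^{-i\pi z})}$, and set $\vp:=e^{-i\pi(\cdot)}\psi$. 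Since $\widehat\psi$ then vanishes on $(2\pi,\infty)$, one checks $\widehat{P_\pi\vp}$ and $\widehat{f_0}$ agree on $(-\pi,\pi)$, so $P_\pi\vp$ differs from $f_0$ only by an element of $\operatorname{span}\{e^{\pm i\pi(\cdot)}\}$; thus $P_\pi\vp=f$ in $\BMO(e^{-i\pi z})$ with $\|\vp\|_\infty\lesssim\|f\|_{\BMO(e^{-i\pi z})}$. Together these give the norm equivalence and, in particular, that $P_\pi$ induces a bounded bijection $L^\infty/\ker P_\pi\to\BMO(e^{-i\pi z})$.

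\emph{Part (2).} Granting (1), I would argue as follows. By Hahn--Banach every $L\in(\cB^1_\pi)^*$ equals $h\mapsto\int_\bbR h\vp\,dm$ for some $\vp\in L^\infty$, uniquely modulo the annihilator of $\cB^1_\pi$ in $L^\infty$; using $(\cB^1_\pi)_{|_\bbR}=\{g\in L^1:\supp\widehat g\subseteq[-\pi,\pi]\}$ and the density of $\{h\in\cB^1_\pi:h_0\in\cS\}$ (to test against $h$ with $\widehat h$ smooth, supported in $(-\pi,\pi)$, which disposes of the endpoints $\pm\pi$), this annihilator is exactly $N:=\{\vp\in L^\infty:\widehat\vp\equiv 0\ \text{on}\ (-\pi,\pi)\}=\ker\big(P_\pi:L^\infty\to\BMO(e^{-i\pi z})\big)$. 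Hence, by (1), $P_\pi$ descends to a Banach-space isomorphism $L^\infty/N\to\BMO(e^{-i\pi z})$, which under $(\cB^1_\pi)^*\cong L^\infty/N$ yields the stated identification --- once one checks the abstract pairing is $L_f(h)=\int hf\,dm$. For that I would verify $\int_\bbR h\,P_\pi\vp\,dm=\int_\bbR h\vp\,dm$ for $h\in\cB^1_\pi$, $\vp\in L^\infty$: for $h_0\in\cS$ this is Parseval's formula, the truncation in $P_\pi$ being inert since $\widehat h$ is supported in $[-\pi,\pi]$; the general $h$ follows by density, both sides being continuous on $\cB^1_\pi$ --- the right one trivially, the left one because $e^{i\pi(\cdot)}h\in H^1(\bbR)$, $e^{-i\pi(\cdot)}P_\pi\vp\in\ov{\BMOA}\subseteq\BMO$, and $\int h\,P_\pi\vp=\la e^{i\pi(\cdot)}h,e^{-i\pi(\cdot)}P_\pi\vp\ra$ is controlled by the $H^1$--$\BMO$ pairing by $\lesssim\|h\|_{\cB^1_\pi}\|f\|_{\BMO(e^{-i\pi z})}$. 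This estimate also shows directly that each $L_f$ is bounded on $\cB^1_\pi$; injectivity of $f\mapsto L_f$ follows since $L_{P_\pi\psi}=0$ forces $\psi\in N$, hence $P_\pi\psi=0$ in $\BMO(e^{-i\pi z})$; and the norm equivalence is inherited from (1).

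\emph{Main obstacle.} The real work is in Part (1): showing that $P_\pi$ genuinely maps $L^\infty$ \emph{into} $\BMO(e^{-i\pi z})$, with the correct norm control. The multiplier $\chi_{[-\pi,\pi]}$ has jumps at the nonzero frequencies $\pm\pi$, so it is not a Calderón--Zygmund operator, and conjugating the classical Riesz-projection estimates by $e^{\pm i\pi(\cdot)}$ runs into the fact that modulation does \emph{not} preserve $\BMO$; the behaviour of $P_\pi\vp$ near the edge frequencies $\pm\pi$ (equivalently of $e^{i\pi(\cdot)}P_\pi\vp$ near $\xi=0,2\pi$) is delicate, and is exactly what the quotient by $\operatorname{span}\{e^{\pm i\pi(\cdot)}\}$ and the two infima in~\eqref{Y-norm:def} are designed to absorb. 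Once this point --- together with a correct definition of $P_\pi$ on $L^\infty$ --- is settled, the remaining ingredients (Hahn--Banach, the annihilator computation via the density lemma, and boundedness of the $H^1$--$\BMO$ pairing) are routine.
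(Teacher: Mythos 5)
Your overall architecture is the paper's: in Part (2) you use Hahn--Banach together with the Parseval observation that $\int_\bbR h\,\vp\,dm=\int_\bbR h\,P_\pi\vp\,dm$ for $h\in\cS_\pi$ and then density (Lemma~\ref{density-lem}), exactly as in the paper, and your surjectivity argument in Part (1) (solve $P_+\psi=e^{i\pi(\cdot)}f_0$ with $\psi\in L^\infty$ and let the spectral support of $f_0$ force $P_\pi(e^{-i\pi(\cdot)}\psi)=f$ modulo $\operatorname{span}\{e^{\pm i\pi(\cdot)}\}$) is essentially the paper's argument for \eqref{as-in-eq}. One small point there: a tempered distribution whose Fourier transform is supported in $\{\pm\pi\}$ is a priori a combination of $p(x)e^{\pm i\pi x}$ with $p$ polynomial, so to conclude that the discrepancy lies in $\operatorname{span}\{e^{\pm i\pi(\cdot)}\}$ you must invoke the $\BMO$ (sub-polynomial) growth to kill the higher-order terms; this is fixable but should be said.

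The genuine gap is exactly where you place your ``main obstacle'': you never prove that $P_\pi$ maps $L^\infty$ into $\BMO(e^{-i\pi z})$ with $\|P_\pi\vp\|_{\BMO(e^{-i\pi z})}\lesssim\|\vp\|_{L^\infty}$, nor even that $P_\pi\vp$ is well defined (modulo $\operatorname{span}\{e^{\pm i\pi(\cdot)}\}$) and is the restriction of a function in $\cE_\pi$; you state these as things to ``check'' and then concede they are delicate, correctly noting that conjugating the Riesz projection by $e^{\pm i\pi(\cdot)}$ fails because modulation does not preserve $\BMO$. But this estimate is the core of Theorem~\ref{main-thm1bis}(1), and without it the converse half of Part (2) also collapses: after Hahn--Banach you need $f:=P_\pi\vp\in\BMO(e^{-i\pi z})$ with norm controlled by $\|\vp\|_{L^\infty}$, which is precisely the unproved claim. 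The missing idea in the paper is the identity \eqref{1st-ob}--\eqref{2nd-ob},
$$
e^{i\pi(\cdot)}P_\pi\vp \;=\; P_+\big(e^{i\pi(\cdot)}\vp\big)\;-\;e^{2i\pi(\cdot)}\,P_+\big(e^{-i\pi(\cdot)}\vp\big),
$$
together with the observation that $e^{2i\pi(\cdot)}$, being the boundary value of a function in $H^\infty(\bbC^+)$, is a bounded multiplier of $\BMOA$ (by duality with $H^1(\bbC^+)$), and the analogous formula with $P_-$ for the $\ov{\BMOA}$ half. This circumvents the very obstruction you identify, since the modulation only ever acts on $\BMOA$ through an analytic bounded symbol; it also furnishes, via the principal-value formula \eqref{Ppi-on-Linfty} and \eqref{3rd-ob}, the precise definition of $P_\pi$ on $L^\infty$ modulo $\operatorname{span}\{e^{\pm i\pi(\cdot)}\}$ and the fact that $P_\pi\vp\in\cE_\pi$. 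Until you supply this decomposition (or an equivalent argument), Part (1) and hence the norm equivalences of Part (2) remain unproved.
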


The above pairings of duality are not absolutely convergent in general
and must be interpreted in a weak sense.  Given $h\in
\cB^1_\pi$, let $h_{(\delta)}\in  \cS_\pi $ and $h_{(\delta)}\to h$ in $ \cB^1_\pi$ as
$\delta\to0^+$,  
then
\begin{equation}\label{pairing-duality}
\int_\bbR h(x) f(x)\, dx := \lim_{\delta\to0^+} \int_\bbR
h_{(\delta)}(x)f(x)\, dx;
\end{equation}
the existence of such functions $h_{(\delta)}$ will be guaranteed by Lemma~\ref{density-lem}.
 \ms

We now characterize the dual space $(\cB^1_\pi)^*$ as the image of
$L^\infty(\bbR)$ via the orthogonal projection $P_\pi$ and  in terms
of bounded Hankel operators on $\cB^2_{\pi/2}$.

\begin{theorem}\label{main-thm2}
  Let $\vp\in
  \cS'$ be a locally integrable function.  Then, the following
conditions are equivalent:
\begin{itemize}
\item[(1)] there exists $\widetilde \vp\in L^\infty$ such that $P_\pi(\widetilde \vp)=\vp$;
\item[(2)] 
  $\vp$ defines an element of $(\cB^1_\pi)^*$ with the pairing
  of duality as in~\eqref{pairing-duality};
\item[(3)] the Hankel operator with symbol $\vp$, $H_\vp: \cB^2_{\pi/2}\to
  \cB^2_{\pi/2}$ is bounded.
\end{itemize}
Moreover, the quantities
$$
\inf_{\widetilde \vp\in L^\infty:\, P_\pi(\widetilde \vp)=\vp }\|\widetilde
\vp\|_{L^\infty},\quad \| \vp\|_{(\cB^1_\pi)^*},\quad \|H_\vp\|_{ \cB^2_{\pi/2}\to
  \cB^2_{\pi/2}}
$$
are all comparable.
\end{theorem}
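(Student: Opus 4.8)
\textbf{Plan for the proof of Theorem~\ref{main-thm2}.}
The strategy is to prove the cycle of implications $(1)\Rightarrow(2)\Rightarrow(3)\Rightarrow(1)$, with quantitative control of the three quantities at each step, so that the norm equivalences come for free. The implication $(2)\Rightarrow(1)$, together with the norm comparison between the first two quantities, is essentially the content of Theorem~\ref{main-thm1bis}(1)--(2): an element of $(\cB^1_\pi)^*$ is, after identification, a class $f\in\BMO(e^{-i\pi z})$ which by part~(1) of that theorem is $P_\pi\widetilde\vp$ for some $\widetilde\vp\in L^\infty$ with $\|\widetilde\vp\|_{L^\infty}\approx\|f\|_{\BMO(e^{-i\pi z})}\approx\|L_f\|_{(\cB^1_\pi)^*}$; one only needs to check that the locally integrable representative $\vp$ we started from and $P_\pi\widetilde\vp$ agree as distributions, which holds because the pairing~\eqref{pairing-duality} against the dense set $\cS_\pi$ determines the functional. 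So the real work is $(1)\Rightarrow(2)$ and the two implications $(2)\Leftrightarrow(3)$ relating duality to Hankel boundedness.

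For $(1)\Rightarrow(2)$: given $\widetilde\vp\in L^\infty$ with $P_\pi\widetilde\vp=\vp$ and $h\in\cS_\pi\subset\cB^1_\pi$, one writes, using self-adjointness of $P_\pi$ on $L^2$ and $P_\pi h=h$ (legitimate since $h$ is Schwartz of the right type, hence in $L^2$),
\[
\int_\bbR h\,\vp\,dm=\int_\bbR h\,\ov{\ov\vp}\,dm=\la h\,\big|\,\ov\vp\ra
=\la h\,\big|\,P_\pi\ov{\widetilde\vp}\,\ra=\la P_\pi h\,\big|\,\ov{\widetilde\vp}\,\ra=\int_\bbR h\,\widetilde\vp\,dm,
\]
hence $|\int_\bbR h\,\vp\,dm|\le\|\widetilde\vp\|_{L^\infty}\|h\|_{L^1}=\|\widetilde\vp\|_{L^\infty}\|h\|_{\cB^1_\pi}$; one must be a little careful about conjugations since $\vp$ need not be real, but replacing $\vp$ by its conjugate where necessary is harmless, and the involution $f\mapsto f^\#$ on Bernstein spaces lets one pass between the bilinear and sesquilinear pairings. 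By density of $\cS_\pi$ in $\cB^1_\pi$ (Lemma~\ref{density-lem}) the functional extends to all of $\cB^1_\pi$ with norm $\le\|\widetilde\vp\|_{L^\infty}$, and taking the infimum over such $\widetilde\vp$ gives $\|\vp\|_{(\cB^1_\pi)^*}\le\inf\|\widetilde\vp\|_{L^\infty}$.

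The heart of the matter is the equivalence of~(2) and~(3), i.e. the classical identification of bounded Hankel operators on $\cB^2_{\pi/2}$ with the dual of $\cB^1_\pi$. The key algebraic fact is that a function in $\cB^1_\pi$ factors as a product $f\,g$ with $f,g\in\cB^2_{\pi/2}$ (split the Fourier support $[-\pi,\pi]$, or more concretely use that $\|fg\|_{\cB^1_\pi}\le\|f\|_{\cB^2_{\pi/2}}\|g\|_{\cB^2_{\pi/2}}$ and, conversely, every $h\in\cB^1_\pi$ admits such a factorization with $\|f\|_{\cB^2_{\pi/2}}\|g\|_{\cB^2_{\pi/2}}\approx\|h\|_{\cB^1_\pi}$ — a Paley--Wiener analogue of the inner--outer/weak factorization of $H^1(\bbD)$, which here follows from the $\ell^p$--$\ell^{p'}$ picture or from Rochberg's work~\cite{Rochberg}). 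Granting this, for $f,g\in\cB^2_{\pi/2}$ one has
\[
\la H_{\vp^\#} f\,\big|\,g\ra=\la P_\pi(\vp^\#\ov f)\,\big|\,g\ra=\la \vp^\#\ov f\,\big|\,g\ra=\int_\bbR \vp^\#\,\ov f\,\ov g\,dm,
\]
so the sesquilinear form of the Hankel operator $H_{\vp^\#}$ is, up to the involution and conjugation bookkeeping, exactly the duality pairing of $\vp$ against the product $\ov f\,\ov g\in\cB^1_\pi$. Thus $H_\vp$ is bounded iff this bilinear form is bounded on $\cB^2_{\pi/2}\times\cB^2_{\pi/2}$, which by the factorization is iff $\vp$ induces a bounded functional on $\cB^1_\pi$, with $\|H_\vp\|\approx\|\vp\|_{(\cB^1_\pi)^*}$. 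The main obstacle I expect is precisely making the factorization $\cB^1_\pi=\cB^2_{\pi/2}\cdot\cB^2_{\pi/2}$ quantitatively sharp (the easy direction is Cauchy--Schwarz on Fourier transforms; the reverse — factoring a given $h\in\cB^1_\pi$ efficiently — is the Bernstein-space substitute for weak factorization of $H^1$ and needs either an appeal to~\cite{Rochberg,Eoff} or a direct zero-splitting argument), together with the care needed in tracking the involution $^\#$ and the complex conjugations so that the sesquilinear Hankel pairing and the bilinear duality pairing in~\eqref{pairing-duality} match up; once that dictionary is fixed, the norm comparisons chain together and close the loop with $(2)\Rightarrow(1)$ from Theorem~\ref{main-thm1bis}.
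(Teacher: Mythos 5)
Your overall architecture is the same as the paper's: the equivalence of (1) and (2) is delegated to Theorem~\ref{main-thm1bis}, the implication $(1)\Rightarrow(3)$ is the estimate $|\la H_\vp f\,|\,g\ra|=|\la\widetilde\vp\,|\,fg\ra|\le\|\widetilde\vp\|_{L^\infty}\|f\|_{\cB^2_{\pi/2}}\|g\|_{\cB^2_{\pi/2}}$, and $(3)\Rightarrow(2)$ goes through a factorization of $\cB^1_\pi$ into products of $\cB^2_{\pi/2}$ functions. The genuine gap is the factorization step itself. You assert \emph{strong} factorization -- every $h\in\cB^1_\pi$ is a single product $fg$ with $\|f\|_{\cB^2_{\pi/2}}\|g\|_{\cB^2_{\pi/2}}\approx\|h\|_{\cB^1_\pi}$ -- and propose to get it by ``splitting the Fourier support'', from ``the $\ell^p$--$\ell^{p'}$ picture'', or by a ``direct zero-splitting argument''. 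None of these works: splitting $\supp\widehat h\subseteq[-\pi,\pi]$ decomposes $h$ as a \emph{sum}, not a product (the transform of $fg$ is a convolution); the sampling isomorphism identifies $\cB^1_\pi$ with $H^1(\bbZ)$, not $\ell^1(\bbZ)$, and pointwise factorization of the sample sequence does not produce two entire functions of exponential type $\pi/2$ whose product is $h$ with controlled $L^2$ norms; and naively dividing the zeros of $h$ between two factors gives no control of the norms on the real line. What is actually available, and what the paper uses, is the \emph{weak} factorization of Baranov--Bessonov--Kapustin: every $h\in\cB^1_\pi$ can be written as $\sum_{j=1}^4 f_jg_j$ with $f_j,g_j\in\cB^2_{\pi/2}$ and the norm equivalence \eqref{fact-B1} (and, for $h\in\cS_\pi$, the factors may be taken in $\cS_{\pi/2}$). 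Your argument survives verbatim once the single product is replaced by this finite sum, since the Hankel form is applied to each term separately; but as written the key lemma is unproved and the routes you sketch for it fail.

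Two smaller points. In $(1)\Rightarrow(2)$ you invoke ``self-adjointness of $P_\pi$ on $L^2$'', but $\widetilde\vp\in L^\infty$ need not be in $L^2$; the paper instead passes to Fourier transforms and uses that $\widehat{h}$ is a nice function supported in $[-\pi,\pi]$, so that pairing against $\widehat{P_\pi\widetilde\vp}$ and against $\widehat{\widetilde\vp}$ give the same value -- one should also note that $P_\pi\widetilde\vp$ is only defined modulo $\operatorname{span}\{e^{\pm i\pi(\cdot)}\}$, which is harmless because $\widehat{h}(\pm\pi)=0$ for $h\in\cB^1_\pi$. Finally, in $(3)\Rightarrow(2)$ the conjugation bookkeeping you wave at is handled in the paper by the identity $H_{\ov\vp}f=H_\vp f^\#$ and the fact that $f\mapsto f^\#$ is an isometry, so that $\|H_{\ov\vp}\|=\|H_\vp\|$; this needs to be said explicitly to close the norm chain.
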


\ms

As metioned earlier, it has been {\em folklore} for sometime that the
dual space $(\cB_\pi^1)^*$ can be described as a space of sequences.
We now make this description explicit.

\begin{defn}{\rm
 Let $\alpha\in[0,1)$. We define
$$
    \cX_\alpha  = \Big\{ f\in \cE_\pi:\,    f(iy) =o(|y|e^{\pi|y|})\ 
      \text{for}\ |y|\to\infty, \, \big(e^{i\pi(\cdot+\alpha)}f
    (\cdot+\alpha)\big)_{|_{\bbR}}\in \BMO(\bbZ) \Big\}\Big/\operatorname{span}\{e^{\pm
        i\pi(\cdot)}\} ,
      $$
      with norm
      $$\|f\|_{\cX_\alpha}= \inf_{c_\pm\in\bbC} \big\|
      e^{i\pi(\cdot+\alpha)}\big( f(\cdot+\alpha)
      +c_+ e^{i\pi(\cdot)}+c_- e^{-i\pi(\cdot)}\big) \big\|_{\BMO(\bbZ)} =\big\|
      e^{i\pi(\cdot+\alpha)}\big( f(\cdot+\alpha)\big\|_{\BMO(\bbZ)} .
      $$
  }
\end{defn}

\begin{remarks}{\rm \ 
    
(1) Observe that the definition of $\cX_\alpha$ makes sense, since the
functions $e^{\pm i\pi z}$ are in $\cE_\pi$ and satisfy the two other
conditions in the definition of $\cX_\alpha$, namely that $ f(iy)
=o(|y|e^{\pi|y|})$
for $|y|\to\infty$, and $\big(e^{i\pi(\cdot+\alpha)}f
    (\cdot+\alpha)\big)_{|_{\bbZ}}\in \BMO(\bbZ)$.  Moreover, notice
    that $\cX_\alpha\neq\{0\}$  since it contains $\sinc(\cdot-\alpha)$.

(2) With the given norm, $\cX_\alpha$ is a Banach space, see Proposition~\ref{Xalpha-Banach:prop}.

(3) We point out  that
the spaces $\cX_\alpha$ are related to the $\BMO$ spaces with respect
to the
the Clark measures  of the inner function $e^{i2\pi z}$, see the
remark in Section~\ref{final-sec}.
} \ms
\end{remarks}

\begin{defn}\label{Talpha:defn}{\rm 
    For $\alpha\in[0,1)$, let $T_\alpha$ be  the linear  mapping 
initially defined on sequences of compact support,
given by
\begin{equation}\label{Talpha:eqq}
  T_\alpha a(z) = e^{-i\pi\alpha}\sum_{n\in\bbZ\setminus\{0\}} (-1)^n a_n \Big(
  \frac{1}{z-\alpha-n} +\frac{1}{n}\Big)
  \frac{\sin[\pi(z-\alpha-n)]}{\pi} + e^{-i\pi\alpha} a_0 \sinc(z-\alpha).
\end{equation}
}
\end{defn}

For the spaces $\cX_\alpha$ we prove the following results.

\begin{theorem}\label{main-thm1}
For $\alpha\in[0,1)$,  $T_\alpha$ induces an isomorphism of Banach spaces
  $T_\alpha:\BMO(\bbZ)\to
\cX_\alpha$ (in particular, it is
onto).  Moreover, for each $L\in (\cB^1_\pi)^*$ there exists a unique
$f\in \cX_\alpha$ that represents the functional $L$ in the sense that
for any $h\in \cB^1_\pi$, 
\begin{equation}\label{discrete-pairing-duality}
L(f) = \sum_{n\in\bbZ} h(n+\alpha)f(n+\alpha).
\end{equation}
\end{theorem}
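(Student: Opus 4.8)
The plan is to reduce Theorem~\ref{main-thm1} to the already-announced sequence description of $(\cB^1_\pi)^*$ via $\BMO(\bbZ)$, using the map $T_\alpha$ as a concrete ``synthesis'' operator that turns a $\BMO(\bbZ)$-sequence into an entire function of exponential type $\pi$ whose samples at $\bbZ+\alpha$ recover (a weighted version of) that sequence. First I would check that $T_\alpha a$, as defined in~\eqref{Talpha:eqq}, is a well-defined entire function for $a\in\BMO(\bbZ)$: the series converges locally uniformly because the correction term $\frac{1}{z-\alpha-n}+\frac1n$ is $O(1/n^2)$ on compacta while $|(-1)^n a_n \sin[\pi(z-\alpha-n)]|$ grows at most polynomially (after subtracting the $\operatorname{span}\{e^{\pm i\pi z}\}$ ambiguity, $a$ has at most logarithmic/bounded growth of partial sums), so the sum defines an element of $\cE_\pi$; the growth condition $f(iy)=o(|y|e^{\pi|y|})$ should drop out from the same estimate, and I would verify $e^{i\pi(\cdot+\alpha)}(T_\alpha a)(\cdot+\alpha)\big|_\bbR\in\BMO(\bbR)$, indeed that its restriction to $\bbZ$ is comparable to $a$ modulo the span of $e^{\pm i\pi(\cdot)}$, by evaluating the Gaussian-type sum/residue at integer points (the $\sin$ factor vanishes at $\bbZ\setminus\{n\}$, picking out the $n$th term). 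This gives that $T_\alpha$ maps $\BMO(\bbZ)\to\cX_\alpha$ boundedly.

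Next I would prove $T_\alpha$ is a Banach-space isomorphism onto $\cX_\alpha$. Injectivity: if $T_\alpha a=0$ in $\cX_\alpha$, i.e.\ $T_\alpha a\in\operatorname{span}\{e^{\pm i\pi z}\}$, then evaluating at $\bbZ+\alpha$ forces $a$ to be (a combination of the two sequences $((-1)^n e^{\pm i\pi\alpha}))$, which are exactly the classes killed when passing to $\BMO(\bbZ)$ modulo constants --- wait, more carefully: $e^{i\pi(n+\alpha)}=(-1)^n e^{i\pi\alpha}$ is a constant sequence times $(-1)^n$; I need the quotient in the definition of $\cX_\alpha$ to match the ``modulo constants'' in $\BMO(\bbZ)$, and the factor $e^{i\pi(\cdot+\alpha)}$ in the $\cX_\alpha$-norm converts $e^{\pm i\pi z}$ into the constant and the $e^{\pm 2i\pi z}$-type sequences, so this should line up. Surjectivity: given $f\in\cX_\alpha$, set $a_n:= e^{i\pi(n+\alpha)}f(n+\alpha)=(-1)^n e^{i\pi\alpha}f(n+\alpha)$, which lies in $\BMO(\bbZ)$ by definition of $\cX_\alpha$; then $g:=T_\alpha a$ satisfies $e^{i\pi(n+\alpha)}g(n+\alpha)=a_n$ too, so $f-g$ is an entire function of exponential type $\pi$ with $f(iy)-g(iy)=o(|y|e^{\pi|y|})$ vanishing on $\bbZ+\alpha$; by an interpolation/uniqueness argument for $\cE_\pi$ (the zero set $\bbZ+\alpha$ together with the growth bound forces $f-g\in\operatorname{span}\{\sin[\pi(z-\alpha)]\}=\operatorname{span}\{e^{\pm i\pi z}\}$, modulo constants this is the Carlson-type uniqueness theorem), hence $f=g$ in $\cX_\alpha$. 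The norm equivalence then follows from the open mapping theorem once boundedness both ways is in hand, or directly from the estimates above.

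Finally, for the duality statement~\eqref{discrete-pairing-duality}: I would compose the isomorphism $\cX_\alpha\cong\BMO(\bbZ)$ (just established) with the known identification $(\cB^1_\pi)^*\cong\BMO(\bbZ)$ under the pairing $L(h)=\la((-1)^n h(n)),b\ra$ (from Eoff's theorem that $h\mapsto((-1)^n h(n))$ is an isomorphism $\cB^1_\pi\to H^1(\bbZ)$, together with $H^1(\bbZ)^*=\BMO(\bbZ)$). Concretely, given $L\in(\cB^1_\pi)^*$, let $b\in\BMO(\bbZ)$ be its representing sequence; but I need to shift by $\alpha$, so I should use the $\alpha$-translated version of Eoff's isomorphism, $h\mapsto ((-1)^n e^{i\pi\alpha} h(n+\alpha))_n$, which is again an isomorphism onto $H^1(\bbZ)$ (this uses that $H^1(\bbZ)$ defined via $H_{d,\alpha}$ is independent of $\alpha$, as stated in the excerpt, and the translation-invariance of $\cB^1_\pi$). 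Set $f:=T_\alpha b\in\cX_\alpha$; then for $h\in\cB^1_\pi$ with sample sequence $h_n:=(-1)^n e^{i\pi\alpha}h(n+\alpha)\in H^1(\bbZ)$ we get $L(h)=\la(h_n),b\ra=\sum_n h_n b_n=\sum_n (-1)^n e^{i\pi\alpha}h(n+\alpha)\cdot e^{i\pi(n+\alpha)}f(n+\alpha)=\sum_n h(n+\alpha)f(n+\alpha)$, using $(-1)^n e^{i\pi\alpha}e^{i\pi(n+\alpha)}=(-1)^n e^{i\pi\alpha}(-1)^n e^{i\pi\alpha}$ --- hmm, that gives $e^{2i\pi\alpha}$, so I must instead define $b_n:=(-1)^n e^{-i\pi\alpha}f(n+\alpha)$ or absorb the phase into $T_\alpha$ (note $T_\alpha$ already carries an $e^{-i\pi\alpha}$ prefactor, which is exactly there to make this cancellation work); I would track these phases carefully so that the product telescopes to $h(n+\alpha)f(n+\alpha)$. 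Uniqueness of $f$ follows from injectivity of $T_\alpha$ together with the density of $\{h\in\cB^1_\pi:h_0\in\cS\}$ (Lemma~\ref{density-lem}) which makes the discrete pairing well-defined and determining. The main obstacle I anticipate is the convergence and boundedness analysis of the series defining $T_\alpha$ on all of $\BMO(\bbZ)$ --- $\BMO$ sequences are not summable, so one must exploit the cancellation in $(-1)^n$ combined with the $O(1/n^2)$ decay of the corrected kernel and a John--Nirenberg / dyadic-averaging argument to control the $\BMO(\bbR)$-norm of the output, paralleling the classical proof that the conjugate-function/Riesz projection maps $L^\infty\to\BMO$; getting the constants and the modulo-$\operatorname{span}\{e^{\pm i\pi z}\}$ bookkeeping exactly right is the delicate part.
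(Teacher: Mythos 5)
Your proposal is correct and follows essentially the same route as the paper: local-uniform convergence of the corrected series using the $O(1/n^2)$ kernel together with $\sum_n|a_n|/(1+n^2)<\infty$ for $\BMO(\bbZ)$ sequences, the sampling identity $e^{i\pi(k+\alpha)}T_\alpha a(k+\alpha)=a_k$, a Carlson-type uniqueness lemma (functions in $\cE_\pi$ vanishing on $\bbZ+\alpha$ with $F(iy)=o(|y|e^{\pi|y|})$ are multiples of $\sin\pi(\cdot-\alpha)$) for injectivity/surjectivity, and Eoff's isomorphism plus $H^1(\bbZ)^*=\BMO(\bbZ)$ for the duality, with the residual $e^{2i\pi\alpha}$ phase absorbed into $f$ exactly as you anticipated. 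The only real difference is that the hard step you feared --- controlling a $\BMO(\bbR)$-norm of $T_\alpha a$ via John--Nirenberg --- is not needed, since the $\cX_\alpha$-norm only sees the samples on $\bbZ+\alpha$, which are exactly $a$.
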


It turns out that the spaces $\cX_\alpha$, $\alpha\in[0,1)$, and $\BMO(e^{-i\pi z})$
they all coincide, thus providing a discrete characterization of the
dual of $\cB^1_\pi$.
\begin{theorem}\label{Y=Xalpha:thm}
For each $\alpha\in[0,1)$, $\BMO(e^{-i\pi z})=\cX_\alpha$, with equivalence of norms.
\end{theorem}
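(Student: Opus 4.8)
The strategy is to play against each other the two realizations of $(\cB^1_\pi)^*$ already obtained. By Theorem~\ref{main-thm1bis} the assignment $f\mapsto L_f$, $L_f(h)=\int_\bbR hf\,dx$, is an isomorphism of $\BMO(e^{-i\pi z})$ onto $(\cB^1_\pi)^*$ with equivalent norms; by Theorem~\ref{main-thm1} (combined with the fact that $T_\alpha\colon\BMO(\bbZ)\to\cX_\alpha$ is an isomorphism and with the identification $(\cB^1_\pi)^*\simeq\BMO(\bbZ)$ recalled in the Introduction) the assignment $g\mapsto\widetilde L_g$, $\widetilde L_g(h)=\sum_{n}h(n+\alpha)g(n+\alpha)$, is an isomorphism of $\cX_\alpha$ onto $(\cB^1_\pi)^*$ with equivalent norms. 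Hence $\Psi:=(g\mapsto\widetilde L_g)^{-1}\circ(f\mapsto L_f)$ is, for free, an isomorphism of Banach spaces $\BMO(e^{-i\pi z})\to\cX_\alpha$ with equivalent norms. Since both spaces are quotients of subspaces of $\cE_\pi$ by the \emph{common} subspace $\operatorname{span}\{e^{\pm i\pi z}\}$, to prove the theorem it suffices to show that $\Psi$ is the identity map — i.e.\ that one and the same class in $\cE_\pi/\operatorname{span}\{e^{\pm i\pi z}\}$ represents a given functional both through the integral pairing and through the sampling pairing; the coincidence of the two spaces and the equivalence of norms then follow automatically.

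The crux is the quadrature identity
\[
\int_\bbR h(x)f(x)\,dx=\sum_{n\in\bbZ}h(n+\alpha)f(n+\alpha),\qquad f\in\BMO(e^{-i\pi z}),\ h\in\cB^1_\pi,
\]
both sides understood in the weak sense of~\eqref{pairing-duality}. I would first reduce, using Lemma~\ref{density-lem} and the remark that the dilation $h\mapsto h(\lambda\,\cdot)$ preserves $\{h\in\cB^1_\pi:h_0\in\cS\}$ while pushing the spectrum into $(-\pi,\pi)$, to the case where $h_0\in\cS$ and $\widehat{h_0}$ is supported in a compact subset of $(-\pi,\pi)$. For such $h$ the product $h_0f_0$ lies in $\cS$ — here one uses that $f_0$ is a smooth function with at most logarithmic growth of itself and all its derivatives, which holds for entire functions of exponential type whose restriction is in $\BMO$ — and $\supp\widehat{h_0f_0}\subseteq\supp\widehat{h_0}+[-\pi,\pi]$ is a compact subset of $(-2\pi,2\pi)$. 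Applying the Poisson summation formula along the lattice $\bbZ+\alpha$, whose dual lattice is $2\pi\bbZ$, only the $k=0$ term survives and gives $\sum_n(h_0f_0)(n+\alpha)=\sqrt{2\pi}\,\widehat{h_0f_0}(0)=\int_\bbR h_0f_0\,dx$. One then drops the extra hypothesis on $h$ by density: on $\{h\in\cB^1_\pi:h_0\in\cS\}$ the right-hand side equals the bounded functional $L_f(h)$, hence is continuous, so it extends continuously and compatibly with~\eqref{pairing-duality}. (The same computation applied to $h$ alone yields the auxiliary identity $\sum_n h(n+\alpha)=\int_\bbR h\,dx$, which in particular makes the constant ambiguities match.)

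Granting the quadrature identity, fix $f\in\BMO(e^{-i\pi z})$. Then $h\mapsto\sum_n h(n+\alpha)f(n+\alpha)$ coincides on $\cB^1_\pi$ with the bounded functional $L_f$; transporting this through the $\alpha$-shifted analogue of Eoff's isomorphism between $\cB^1_\pi$ and $H^1(\bbZ)$ and using the duality $\BMO(\bbZ)=(H^1(\bbZ))^*$, one concludes that the twisted sample sequence $\big(e^{i\pi(n+\alpha)}f(n+\alpha)\big)_{n}$ lies in $\BMO(\bbZ)$ modulo constants, with norm controlled by $\|f\|_{\BMO(e^{-i\pi z})}$. Together with the automatic growth bound $f(iy)=o(|y|e^{\pi|y|})$ — which follows from the standard $\BMOA$ growth estimate applied to $e^{i\pi z}f$ on the upper half-plane and to $e^{-i\pi z}f$ on the lower half-plane — this says exactly that the class of $f$ belongs to $\cX_\alpha$ and that $f$ represents $L_f$ through the sampling pairing; by the uniqueness part of Theorem~\ref{main-thm1}, the class of $f$ must then equal $\Psi(f)$, so $\Psi$ is the identity. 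Running the same argument starting from a given $g\in\cX_\alpha$ and the $\BMO(e^{-i\pi z})$-representative of $\widetilde L_g$ supplies the reverse inclusion, and the equivalence of norms is inherited from $\Psi$ being a Banach-space isomorphism. The main obstacle is the quadrature identity and the regularity input it (and, if desired, the endgame) requires: the pointwise logarithmic control of entire functions of exponential type lying in $\BMO$, and the vanishing of the boundary mass at $\pm2\pi$ in the Poisson summation — which is precisely where the constraint $\widehat{h_0}(\pm\pi)=0$ built into $\cB^1_\pi$ enters. As an alternative to invoking uniqueness in Theorem~\ref{main-thm1}, one may combine the quadrature identity with the auxiliary identity $\sum_n h(n+\alpha)=\int h$ to see that $f-g$ agrees on $\bbZ+\alpha$ with some element of $\operatorname{span}\{e^{\pm i\pi z}\}$, and then close with a Phragmén--Lindelöf argument on the resulting type-zero quotient function.
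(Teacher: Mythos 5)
Your strategy is genuinely different from the paper's: the paper deduces $\big((-1)^nf(n)\big)\in\BMO(\bbZ)$ by identifying $f\in\BMO(e^{-i\pi z})$ with the symbol of a bounded Hankel (equivalently, Toeplitz) operator on $\cB^2_{\pi/2}$ via Theorem~\ref{main-thm2} and quoting the characterization of bounded Toeplitz operators on the Paley--Wiener space from~\cite{Carlsson}, and it verifies condition $(Y)$ directly from $f=P_\pi\vp$, $\vp\in L^\infty$, and formula~\eqref{3rd-ob}; you instead try to match the integral and sampling realizations of $(\cB^1_\pi)^*$ through a Poisson-summation quadrature identity and then transfer membership through the $H^1(\bbZ)$--$\BMO(\bbZ)$ duality. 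The quadrature identity itself is fine for your reduced class of $h$ (Schwartz restriction, spectrum in a compact subset of $(-\pi,\pi)$), provided you justify the pointwise logarithmic bounds for $f_0$ and its derivatives; these, as well as condition $(Y)$, should be obtained from the representation $f=P_\pi\vp$ and~\eqref{3rd-ob} (or the Cartwright-class Poisson majorization), not from a bare ``$\BMOA$ growth estimate'', since a priori $e^{i\pi z}f$ is not known to be the Poisson extension of its boundary values.

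The pivotal step, however, has a genuine gap as written. The sentence ``$h\mapsto\sum_n h(n+\alpha)f(n+\alpha)$ coincides on $\cB^1_\pi$ with $L_f$'' is not meaningful for general $h$ (the series need not converge); what you actually have is coincidence on your dense class. From that, the duality $(H^1(\bbZ))^*=\BMO(\bbZ)$ only furnishes \emph{some} sequence $b'\in\BMO(\bbZ)$ representing $L_f\circ\widetilde T_\alpha$ in the weak (atomic) sense; it does not by itself place the specific sequence $b=\big(e^{i\pi(n+\alpha)}f(n+\alpha)\big)_n$ in $\BMO(\bbZ)$. To conclude, you must identify $b$ with $b'$ modulo constants, which in practice means testing against the discrete atoms, i.e.\ against finite combinations $\sum_n a_n(-1)^n\sinc(\cdot-\alpha-n)$; but these do not belong to the class where you proved the quadrature identity (they are not Schwartz and their spectra reach $\pm\pi$), and extending the identity to them by density is exactly the delicate point: $\ell^1$ or $H^1(\bbZ)$ convergence of the samples does not allow passing to the limit against the logarithmically growing samples of $f$ without an extra argument. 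This can be repaired -- for instance, apply the identity to the dilates $h_a(\lambda\,\cdot)$ with $\lambda\uparrow1$, whose spectra stay inside $(-\pi,\pi)$ and whose samples are dominated by $C(1+n^2)^{-1}$ uniformly in $\lambda$, and then use dominated convergence together with $|f(n+\alpha)|\lesssim\log(2+|n|)$, checking Poisson summation for the (non-Schwartz but quadratically decaying) products via the continuity and compact support of their Fourier transforms -- but this identification step is missing, and without it the inference ``hence the twisted sample sequence lies in $\BMO(\bbZ)$'' does not follow. Once it is supplied, your endgame (uniqueness in Theorem~\ref{main-thm1} for one inclusion, symmetry for the other, and norm equivalence from the two isomorphisms with $(\cB^1_\pi)^*$) does go through.
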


\ms

Our next result shows that $\cB^1_\pi$ is the dual space of a
$\VMO$-type space that can also be described in terms of compact Hankel
operators.  We recall that $\VMO$ is the subspace of $\BMO$ consisting
of the closure of $C_0(\bbR)$ in the $\BMO$-norm -- to be precise, the 
closure of $C_0(\bbR)$ {\em modulo constants}.  $\VMO$ can be
charracterized as the subspace of $\BMO$-functions $\vp$ such that
$$
\lim_{\delta\to0} \sup_{|I|<\delta} \frac{1}{|I|} \int_I |\vp(x)-
\vp_I|\, dx =0.
$$
We also set $\VMOA:=\VMO\cap\BMOA$.

\begin{defn}\label{Y0space:def}{\rm
We define $\VMO(e^{-i\pi z})$ to be the subspace of $\BMO(e^{-i\pi z})$ given by
$$
    \VMO(e^{-i\pi z}) = \Big\{ f\in \cE_\pi:
      \, e^{i\pi(\cdot)}f_0
   \in \VMOA,\,  \, e^{-i\pi(\cdot)}f_0
   \in \ov{\VMOA} \Big\}\Big/\operatorname{span}\{e^{\pm
        i\pi(\cdot)}\} ,
      $$
      with the norm of $\BMO(e^{-i\pi z})$. 
} \ms
\end{defn}

\begin{theorem}\label{main-thmVMO}
{\rm  (1)} The projection $P_\pi$ maps $C_0(\bbR)$ onto $\VMO(e^{-i\pi z})$ and for all $f\in\VMO(e^{-i\pi z})$,
  $\|f\|_{\BMO(e^{-i\pi z})}\approx \inf\{ \|\eta\|_{L^\infty}:\, \eta\in C_0(\bbR),\,  P_\pi \eta=f\}$.

 {\rm (2)} The space $\cB^1_\pi$ can be identified with ${\VMO(e^{-i\pi z})}^*$,  the dual of $\VMO(e^{-i\pi z})$. Precisely,
each  $h\in\cB^1_\pi$ defines a bounded linear functional on $\VMO(e^{-i\pi z})$ by
the pairing
$$
L_h(\vp)= \int_\bbR h(x) \vp(x)\, dx =\int_\bbR h(x)\eta(x)\, dx
$$
for any $\vp\in \VMO(e^{-i\pi z})$, where $\vp=P_\pi\eta$, $\eta\in C_0(\bbR)$. 
Conversely, 
  for each $L\in {\VMO(e^{-i\pi z})}^*$ there exists a unique
$h\in\cB^1_\pi$ such that $L=L_h$.  Moreover, 
$\|L_h\|_{{\VMO(e^{-i\pi z})}^*}\approx\|h\|_{\cB^1_\pi}$.  
\end{theorem}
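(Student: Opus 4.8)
The plan is to obtain part (1) as a ``predual'' analogue of Theorem~\ref{main-thm1bis}(1) and then derive part (2) by a standard duality-of-predual argument, using that $\VMO(e^{-i\pi z})$ is, by construction, the closure in $\BMO(e^{-i\pi z})$ of the image of the test functions under $P_\pi$. First I would prove (1). The inclusion $P_\pi(C_0(\bbR))\subseteq \VMO(e^{-i\pi z})$ is immediate: if $\eta\in C_0(\bbR)$ then $\eta\in L^\infty$, so $P_\pi\eta\in\BMO(e^{-i\pi z})$ by Theorem~\ref{main-thm1bis}(1), and moreover $e^{\pm i\pi(\cdot)}(P_\pi\eta)_0$ differs from a function with spectrum in a half-line by a bounded term; the point is that $P_\pi$ sends $C_0$ into $\VMO$ because the Fourier multiplier defining $P_\pi$ (truncation to $[-\pi,\pi]$, equivalently a combination of modulations of the Hilbert transform) maps $C_0(\bbR)$ into $\VMO(\bbR)$, a classical fact. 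For surjectivity and the norm estimate, I would argue by density: if $f\in\VMO(e^{-i\pi z})$, then by definition $e^{i\pi(\cdot)}f_0\in\VMOA$, hence $e^{i\pi(\cdot)}f_0=\lim g_j$ in $\BMOA$ with $g_j\in C_0(\bbR)$ (and similarly for the other half), and running these approximants through the (bounded, surjective) operator $P_\pi$ of Theorem~\ref{main-thm1bis}(1) together with an open-mapping / bounded-right-inverse argument produces $\eta\in C_0(\bbR)$ with $P_\pi\eta=f$ and $\|\eta\|_{L^\infty}\lesssim \|f\|_{\BMO(e^{-i\pi z})}$; the reverse inequality is Theorem~\ref{main-thm1bis}(1) again.

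Next I would set up part (2). Define $L_h(\vp)=\int_\bbR h\,\eta\,dm$ for $\vp=P_\pi\eta$, $\eta\in C_0(\bbR)$, $h\in\cB^1_\pi$. The first thing to check is that this is well defined, i.e. independent of the representative $\eta$: if $P_\pi\eta=0$ then $\int_\bbR h\eta\,dm=\int_\bbR h\,P_\pi\eta\,dm=0$ because $h\in\cB^2_\pi$-spectrum sits in $[-\pi,\pi]$ and $P_\pi$ is self-adjoint, so $\int h\eta=\langle \eta, \bar h\rangle=\langle P_\pi\eta,\bar h\rangle=0$ (making this rigorous uses $h\in L^1$ with spectrum in $[-\pi,\pi]$ and $\eta\in C_0\subset L^2$+$L^\infty$, approximating $h$ by Schwartz functions in $\cB^1_\pi$ via Lemma~\ref{density-lem}). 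Boundedness $|L_h(\vp)|\le \|h\|_{\cB^1_\pi}\|\eta\|_{L^\infty}$, optimized over representatives, gives $\|L_h\|_{\VMO(e^{-i\pi z})^*}\lesssim \|h\|_{\cB^1_\pi}$ by part (1). Conversely, given $L\in\VMO(e^{-i\pi z})^*$, precompose with $P_\pi:C_0(\bbR)\to\VMO(e^{-i\pi z})$ to get a bounded functional on $C_0(\bbR)$, i.e. a finite measure $\mu$ on $\bbR$ with $L(P_\pi\eta)=\int\eta\,d\mu$; this $\mu$ is only determined modulo the annihilator of $\mathrm{ran}(P_\pi|_{C_0})$, which (by the $\eta$-independence computation above) is exactly the set of measures annihilating $\cB^1_\pi$—equivalently $\widehat\mu$ vanishing on $(-\pi,\pi)$. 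One then shows $\mu$ can be chosen so that $\widehat\mu$ is supported in $[-\pi,\pi]$, hence $\mu=h\,dm$ for some $h$; the content is that $h\in\cB^1_\pi$ (not merely a distribution), which I would get from the $\BMO(e^{-i\pi z})$–$\cB^1_\pi$ duality of Theorem~\ref{main-thm1bis}(2): $L$ extends (Hahn–Banach) to $\widetilde L\in\BMO(e^{-i\pi z})^*=(\cB^1_\pi)^{**}$? — better, use instead that $\cB^1_\pi\hookrightarrow \VMO(e^{-i\pi z})^*$ isometrically (up to constants) by the first half, and that $\BMO(e^{-i\pi z})=\VMO(e^{-i\pi z})^{**}$ would follow once we know $\cB^1_\pi=\VMO(e^{-i\pi z})^*$; to avoid circularity I would instead directly identify the functional via the sequence model.

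Concretely, the cleanest route for the converse is to transport everything to $\ell^1/\ell^\infty$ via the isomorphisms already in hand: $\cB^1_\pi\cong H^1(\bbZ)$ (Eoff) and, by Theorem~\ref{main-thm1}/Theorem~\ref{Y=Xalpha:thm}, $\BMO(e^{-i\pi z})\cong\BMO(\bbZ)$; under these, $\VMO(e^{-i\pi z})$ should correspond to $\VMO(\bbZ)$ (the $\BMO(\bbZ)$-closure of finitely supported sequences modulo constants), and the classical fact from real Hardy space theory on $\bbZ$ is that $\VMO(\bbZ)^*=H^1(\bbZ)$ with the natural pairing $\langle b,h\rangle=\sum b_n h_n$ (Coifman–Weiss / Boza–Carro duality). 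Pulling this isomorphism back through $T_\alpha$ and Eoff's map yields the desired $h\in\cB^1_\pi$ with $L=L_h$ and the norm equivalence; the pairing $\sum_n h(n+\alpha)f(n+\alpha)$ of Theorem~\ref{main-thm1} then matches $\int_\bbR h\eta$, and uniqueness of $h$ follows because the only element of $\cB^1_\pi$ annihilating all of $\VMO(e^{-i\pi z})\supseteq P_\pi(C_0)$ is $0$ (again the $\eta$-independence computation, now read as: $h$ orthogonal to $P_\pi(C_0(\bbR))$ forces $\widehat h\equiv 0$ on $(-\pi,\pi)$, hence $h=0$ in $\cB^1_\pi$).

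\textbf{Main obstacle.} The delicate point is the converse direction of part (2): showing that a functional on $\VMO(e^{-i\pi z})$, a priori represented only by a finite \emph{measure} $\mu$ via Riesz representation on $C_0(\bbR)$, is actually represented by an \emph{$L^1$ function lying in $\cB^1_\pi$}, with control of the $\cB^1_\pi$-norm. The subtlety is that the natural ``quotient'' realization only pins $\mu$ down modulo measures whose Fourier transform vanishes on $(-\pi,\pi)$, and one must select from this class a representative with spectrum \emph{contained in} $[-\pi,\pi]$ and with integrable density — this is exactly where the identification $\cB^1_\pi=(\cB^1_\pi|_\bbR)$ as the $L^1$-functions with spectrum in $[-\pi,\pi]$, the density Lemma~\ref{density-lem}, and the $\VMO(\bbZ)$–$H^1(\bbZ)$ duality must be combined carefully to avoid a circular appeal to $\VMO(e^{-i\pi z})^*=\cB^1_\pi$ itself. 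Everything else — well-definedness, the easy norm inequality, and part (1) — is routine given Theorem~\ref{main-thm1bis} and the classical mapping properties of $P_\pi$ on $C_0(\bbR)$.
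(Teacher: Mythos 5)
Your part (1) and the forward half of part (2) follow essentially the paper's route: the decisive input is that $P_\pm$ map $C_0(\bbR)$ onto $\VMOA$ and $\ov{\VMOA}$ respectively, combined with \eqref{1st-ob}--\eqref{2nd-ob} and the support argument used for \eqref{as-in-eq}. Your ``open-mapping / bounded-right-inverse'' sketch for surjectivity is vaguer than necessary (one simply takes $\eta_\pm\in C_0(\bbR)$ with $e^{\pm i\pi(\cdot)}f_0=P_\pm\eta_\pm$ and repeats the argument of Theorem~\ref{main-thm1bis} to get $f_0=P_\pi(e^{-i\pi(\cdot)}\eta_+)$), but it is in the same spirit and salvageable.

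The genuine gap is in the converse half of part (2). You abandon the direct argument because of a perceived ambiguity --- that the representing measure is ``only determined modulo measures whose Fourier transform vanishes on $(-\pi,\pi)$'' --- but this obstacle does not exist: the functional $\Phi=L\circ P_\pi$ on $C_0(\bbR)$ has a \emph{unique} Riesz representing measure $\mu$, and testing against Schwartz functions $\eta$ with $\supp\widehat\eta\cap[-\pi,\pi]=\emptyset$ (for which $P_\pi\eta=0$) shows that this particular $\mu$ already satisfies $\supp\widehat\mu\subseteq[-\pi,\pi]$; the Paley--Wiener--Schwartz theorem then gives $\mu=h_0$ with $h\in\cE_\pi$, and finiteness of the total variation gives $h_0\in L^1$, i.e.\ $h\in\cB^1_\pi$. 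This is exactly the paper's proof (recorded there as Proposition~\ref{Riesz's-thm}), so no selection of a representative and no circular appeal to the duality being proved is needed. The substitute route you propose is not complete as written: it hinges on the claim that the sampling isomorphisms identify $\VMO(e^{-i\pi z})$ with a discrete $\VMO(\bbZ)$ (closure of finitely supported sequences in $\BMO(\bbZ)$), a statement of the same nature and difficulty as Theorem~\ref{Y=Xalpha:thm} which you only assert (``should correspond to''); it further invokes a discrete $\VMO(\bbZ)^*=H^1(\bbZ)$ duality that you would have to establish or cite, and you would still need to check that the resulting discrete pairing coincides with the integral pairing $\int_\bbR h\eta\,dm$ appearing in the statement. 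Until those steps are supplied, the converse direction is unproved; the direct measure-theoretic argument you discarded is both simpler and complete.
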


\begin{theorem}\label{main-thm3}
Let $\vp\in\cE_\pi$ be such that $\vp_0\in\cS'$.  Then, the following
conditions are equivalent:
\begin{itemize}
\item[(1)]  $\vp\in \VMO(e^{-i\pi z})$;
\item[(2)] the Hankel operator with symbol $\vp$, $H_\vp: \cB^2_{\pi/2}\to
  \cB^2_{\pi/2}$ is compact.
\end{itemize}

\end{theorem}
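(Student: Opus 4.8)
The plan is to extract both implications from Theorem~\ref{main-thm2}, which already matches boundedness of $H_\varphi$ with $\varphi\in(\cB^1_\pi)^*=\BMO(e^{-i\pi z})$ (comparable norms), by inserting a Hartman-type approximation; the structural input is that translating the symbol conjugates the linearised Hankel operator by a translation unitary of $\cB^2_{\pi/2}$, and that such conjugation is norm-continuous on compact operators. I would start from two reductions, both already implicit in the proof of Theorem~\ref{main-thm2}. Since for $f,g\in\cB^2_{\pi/2}$ the product $fg$ lies in $\cB^1_\pi$, so $\widehat{fg}$ is supported in $[-\pi,\pi]$ and vanishes at $\pm\pi$, the Hankel form $(f,g)\mapsto\la H_\varphi f\,|\,g\ra=\int_\bbR\varphi\,\ov{fg}\,dm$ depends on $\varphi$ only through $P_\pi\varphi$, so $H_\varphi=H_{P_\pi\varphi}$ and we may assume $\varphi\in\cE_\pi$. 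Using the isometric involution $f\mapsto f^\#$, $f^\#(z)=\ov{f(\bar z)}$, of $\cB^2_{\pi/2}$, the conjugate-linear map $H_\varphi$ is compact if and only if the linear map $\Gamma_\varphi:=H_\varphi\circ(\cdot)^\#$, i.e. $\Gamma_\varphi f=P_{\pi/2}(\varphi f)$, is compact (and $\|\Gamma_\varphi\|=\|H_\varphi\|$ as operator norms, and $\Gamma_\varphi$ likewise sees $\varphi$ only through $P_\pi\varphi$); a change of variables in the reproducing-kernel integral for $P_{\pi/2}$ gives, with $\tau_t\varphi:=\varphi(\cdot-t)$ and $U_tf:=f(\cdot-t)$ (a unitary of $\cB^2_{\pi/2}$), the covariance identity $\Gamma_{\tau_t\varphi}=U_t\,\Gamma_\varphi\,U_{-t}$.

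For $(1)\Rightarrow(2)$ I would prove compactness on a dense set and then close up. By Theorem~\ref{main-thmVMO}(1), $\VMO(e^{-i\pi z})=P_\pi(C_0(\bbR))$; since $C_c^\infty(\bbR)$ is $\|\cdot\|_\infty$-dense in $C_0(\bbR)$ and $P_\pi:L^\infty\to\BMO(e^{-i\pi z})$ is bounded, $P_\pi(C_c^\infty(\bbR))$ is dense in $\VMO(e^{-i\pi z})$. For $\eta\in C_c^\infty(\bbR)$, $\Gamma_{P_\pi\eta}=\Gamma_\eta$ factors through the inclusion $\cB^2_{\pi/2}\hookrightarrow L^2(\bbR)$, multiplication by $\eta$, and $P_{\pi/2}$; evaluating the Hilbert--Schmidt norm of the first two against an orthonormal basis $(e_n)$ of $\cB^2_{\pi/2}$ and using that $\sum_n|e_n(x)|^2$ equals the (constant) diagonal value of the reproducing kernel of $\cB^2_{\pi/2}$, one gets $\sum_n\|\eta e_n\|_{L^2}^2=\int_\bbR|\eta|^2 K_{\pi/2}(x,x)\,dm<\infty$, so $\Gamma_\eta$, and hence $H_{P_\pi\eta}$, is Hilbert--Schmidt, in particular compact. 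As the compact operators form a closed subspace of $\cL(\cB^2_{\pi/2})$ and $\varphi\mapsto H_\varphi$ is bounded from $\BMO(e^{-i\pi z})$ into $\cL(\cB^2_{\pi/2})$ by Theorem~\ref{main-thm2}, the set of $\varphi\in\BMO(e^{-i\pi z})$ with $H_\varphi$ compact is closed; containing the dense subset $P_\pi(C_c^\infty(\bbR))$, it contains all of $\VMO(e^{-i\pi z})$.

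For $(2)\Rightarrow(1)$: compactness of $H_\varphi$ forces $\varphi\in(\cB^1_\pi)^*=\BMO(e^{-i\pi z})$ by Theorem~\ref{main-thm2}. Fix $\rho\in C_c^\infty(\bbR)$ with $\rho\ge0$, $\int_\bbR\rho\,dm=1$, put $\rho_\varepsilon(t)=\varepsilon^{-1}\rho(t/\varepsilon)$, and consider the mollified symbols $\varphi*\rho_\varepsilon$. I would first check that each $\varphi*\rho_\varepsilon$ lies in $\VMO(e^{-i\pi z})$: it belongs to $\cE_\pi$ because its Fourier transform is supported in $[-\pi,\pi]$ (where $\widehat{\varphi_0}$ sits), and from the identity $e^{\pm i\pi x}(\varphi*\rho_\varepsilon)_0=(e^{\pm i\pi(\cdot)}\varphi_0)*(e^{\pm i\pi(\cdot)}\rho_\varepsilon)$ one obtains, by translation-invariance of the $\BMO$ seminorm, $e^{i\pi x}(\varphi*\rho_\varepsilon)_0\in\BMOA$ and $e^{-i\pi x}(\varphi*\rho_\varepsilon)_0\in\ov{\BMOA}$, and then, since $(e^{\pm i\pi(\cdot)}\rho_\varepsilon)'$ has integral $0$ and convolving a $\BMO$ function with a compactly supported mean-zero kernel gives a bounded function, the derivative of $e^{\pm i\pi x}(\varphi*\rho_\varepsilon)_0$ is bounded, so these functions lie in $\VMOA$, resp. $\ov{\VMOA}$. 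Next, averaging the covariance identity gives $\Gamma_{\varphi*\rho_\varepsilon}=\int_\bbR\rho_\varepsilon(t)\,U_t\Gamma_\varphi U_{-t}\,dt$, so $\|\Gamma_{\varphi*\rho_\varepsilon}-\Gamma_\varphi\|\le\int_\bbR\rho_\varepsilon(t)\,\|U_t\Gamma_\varphi U_{-t}-\Gamma_\varphi\|\,dt\to0$ as $\varepsilon\to0$, because $\Gamma_\varphi$ is compact and $t\mapsto U_t$ is strongly continuous, so $t\mapsto U_t\Gamma_\varphi U_{-t}$ is norm-continuous at $t=0$. Hence $\|H_\varphi-H_{\varphi*\rho_\varepsilon}\|\to0$, and the norm comparison of Theorem~\ref{main-thm2} applied to $\varphi-\varphi*\rho_\varepsilon$ yields $\|\varphi-\varphi*\rho_\varepsilon\|_{\BMO(e^{-i\pi z})}\to0$. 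Since $\VMO(e^{-i\pi z})$ is a closed subspace of $\BMO(e^{-i\pi z})$ and contains every $\varphi*\rho_\varepsilon$, we conclude $\varphi\in\VMO(e^{-i\pi z})$.

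The main obstacle is this last verification in $(2)\Rightarrow(1)$: one must keep the mollified symbol inside the twisted space $\VMO(e^{-i\pi z})$, not merely in $\VMO(\bbR)$, and the device that makes this work is the identity $e^{\pm i\pi x}(\varphi*\rho_\varepsilon)=(e^{\pm i\pi(\cdot)}\varphi)*(e^{\pm i\pi(\cdot)}\rho_\varepsilon)$, pushing the unimodular factor through the convolution so that translation-invariance (for $\BMO$) and the mean-zero-derivative bound (for $\VMO$) apply; everything else — Theorem~\ref{main-thm2}, the single Hilbert--Schmidt estimate, and the norm-continuity of $t\mapsto U_tKU_{-t}$ on compact $K$ — is routine.
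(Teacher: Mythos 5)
Your implication $(1)\Rightarrow(2)$ is fine, and it is a genuinely different (and arguably more quantitative) route than the paper's: the paper tests $H_\vp$ directly on weakly null sequences using the $C_0$ representative of the symbol, while you prove a Hilbert--Schmidt bound for $H_{P_\pi\eta}$, $\eta\in C^\infty_c(\bbR)$, and then invoke the closedness of the compact operators together with the norm comparison $\|H_\psi\|\lesssim\|\psi\|_{\BMO(e^{-i\pi z})}$ of Theorem~\ref{main-thm2}. The reductions $H_\vp=H_{P_\pi\vp}$ (since $2\cdot\frac\pi2=\pi$) and the covariance identity for translated symbols are also correct.

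The gap is in $(2)\Rightarrow(1)$, at the step ``$\vp*\rho_\varepsilon\in\VMO(e^{-i\pi z})$''. The paper \emph{defines} $\VMO$ as the closure of $C_0(\bbR)$ in the $\BMO$-norm, and that is the definition your own first half relies on (via Theorem~\ref{main-thmVMO}(1), $\VMO(e^{-i\pi z})=P_\pi(C_0(\bbR))$) and the one the theorem requires. For that definition, ``bounded derivative'' does not give membership: your argument for the membership of $\vp*\rho_\varepsilon$ uses only $\vp\in\BMO(e^{-i\pi z})$ and never the compactness of $H_\vp$, and as a general implication it is false. Take $\vp(z)=\cos(\pi\alpha z)$ with $0<\alpha<1$: then $e^{\pm i\pi(\cdot)}\vp_0$ is bounded, Lipschitz, and lies in $\BMOA$ resp.\ $\ov{\BMOA}$, and $\vp*\rho_\varepsilon$ is again a combination of $e^{\pm i\pi\alpha(\cdot)}$ (multiplied by $\widehat{\rho_\varepsilon}(\pm\pi\alpha)$); yet no choice of representative modulo $\operatorname{span}\{e^{\pm i\pi(\cdot)}\}$ puts $e^{i\pi(\cdot)}(\vp*\rho_\varepsilon)_0$ into the closure of $C_0(\bbR)$ in $\BMO$, because its mean oscillation over intervals of fixed length does not decay at infinity; consistently, $H_{\cos(\pi\alpha\,\cdot)}$ is bounded but not compact (on the Fourier side it is a truncated translation, which maps a suitable weakly null orthonormal family to vectors of norm bounded below). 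What your Lipschitz estimate yields is only the small-scale vanishing-oscillation condition; that condition characterizes the larger Sarason class, not the closure of $C_0(\bbR)$ (the difference is exactly oscillation at large scales/at infinity, which mollification cannot remove), and under the Sarason reading the theorem itself would be false by the same example. So compactness of $H_\vp$ must re-enter precisely to control the symbol at infinity, and your proposal does not do this. The paper's proof of $(2)\Rightarrow(1)$ is entirely different: it shows that the space of symbols of compact Hankel operators and its closed subspace $\{H_\vp:\vp\in\VMO(e^{-i\pi z})\}$ have the same dual, identified with $\cB^1_\pi$ through trace duality with trace-class operators (using the weak factorization \eqref{fact-B1}), and concludes that the two spaces coincide. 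To salvage your Hartman-style approach you would need, besides mollification, an approximation step (e.g.\ a cutoff of the symbol whose error is controlled by the compactness of $H_\vp$) that also kills the oscillation at infinity.
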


\ms

The Paley--Wiener and Bernstein spaces constitute a cornerstone of
function theory, analysis of holomorphic function spaces, signal
analysis and operator theory.  We point out that the spaces
$\cB^p_\kappa$ with $p\neq2$ are often also called ``Paley--Wiener spaces''
in the literature.  The classical Bernstein inequality
\cite{Bernstein}
(see also~\cite{ P-P, Andersen, Bernstein-CR}) was first proved in the case
of $\cB^\infty_\kappa$. The extension
to $\cB^p_\kappa$ for $p<\infty$ (and $p\neq2$) is simple.  For this
reason, the spaces $\cB^p_\kappa$ are often called {\em Bernstein}
spaces. In this work , we adhere to such terminology.

While in this paper we deal only with
their $1$-dimensional version, the higher dimensional generalizations
have drawn a considerable amount of interest in recent times.  We
particularly refer to the papers
\cite{fractional-Bernstein,sampling-Cn+1,Bernstein-CR,Carleson-on-Bernstein},
and references therein. 

The paper is organized as follows.  In Section~\ref{Pf-Thm1bis:sec} we
prove some properties of the space $\BMO(e^{-i\pi z})$ and 
that the dual space $(\cB^1_\pi)^*$ can be identified with $\BMO(e^{-i\pi z})$, and
thus prove Theorem~\ref{main-thm1bis}.  In Section~\ref{Pf-Thm2:sec}
we prove the characterization of $\BMO(e^{-i\pi z})$ in terms of boundedness of
Hankel operators on $\cB^2_{\pi/2}$, in particular we prove
Theorem~\ref{main-thm2}. Section~\ref{Pf-Thm1:sec} is devoted to the
discrete characterization of the pairing of duality, along the lines
of~\cite{Bessonov}. In Section~\ref{VMO:sec} we show that $\cB^1_\pi$
is the dual space of a VMO-type space, whose elements are in turn
precisely the symbols of compact Hankel operators on $\cB^2_{\pi/2}$.
We conclude with some remarks about extensions, open problems, and future works.


\ms

\section{Proof of Theorem~\ref{main-thm1bis}}\label{Pf-Thm1bis:sec}

We begin by collecting a few preliminary results. The first result can be proved exactly as~\cite[Lemma
4.1]{fractional-Bernstein}. That was stated for $p\in(1,\infty)$, but
the proof applies {\em verbatim} also to the case $p=1$. 
\begin{lem}\label{density-lem}
The subspace $\cS_\pi:= \{f\in \cB^1_\pi:\, f_0\in\cS\}$ is dense
in $\cB^1_\pi$.
\end{lem}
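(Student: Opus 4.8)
The plan is to follow the dilation--plus--mollification scheme of \cite[Lemma 4.1]{fractional-Bernstein} (stated there for $p\in(1,\infty)$, but valid verbatim for $p=1$): given $f\in\cB^1_\pi$, one first compresses the exponential type so that the spectrum sits strictly inside $(-\pi,\pi)$, and then multiplies by a slowly varying Schwartz bump to force rapid decay without pushing the spectrum outside $[-\pi,\pi]$. For the first step I would set, for $0<\lambda<1$, $f_\lambda(z):=f(\lambda z)$; then $f_\lambda\in\cE_{\lambda\pi}$, $(f_\lambda)_0\in L^1(\bbR)$ with $\supp\widehat{(f_\lambda)_0}\subseteq[-\lambda\pi,\lambda\pi]$, and since the dilations form a uniformly bounded family of operators on $L^1(\bbR)$ converging strongly to the identity, $\|f_\lambda-f\|_{\cB^1_\pi}=\|f(\lambda\cdot)-f\|_{L^1}\to0$ as $\lambda\to1^-$.

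For the second step, fix once and for all $\rho:=\cF^{-1}\chi$ with $\chi\in C^\infty_c(-1,1)$ normalized so that $\rho(0)=\frac{1}{\sqrt{2\pi}}\int_\bbR\chi=1$; then $\rho\in\cS$, $\supp\widehat\rho\subseteq(-1,1)$, and by Paley--Wiener $\rho\in\cE_1$. Put $\rho_\eps(x):=\rho(\eps x)$, so that $\rho_\eps\in\cS\cap\cE_\eps$, $\|\rho_\eps\|_{L^\infty}=\|\rho\|_{L^\infty}$, $\rho_\eps\to1$ pointwise on $\bbR$, and $\supp\widehat{\rho_\eps}\subseteq(-\eps,\eps)$. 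Set $f_{\lambda,\eps}:=f_\lambda\,\rho_\eps$. As a product of entire functions of exponential type $\lambda\pi$ and $\eps$, $f_{\lambda,\eps}\in\cE_{\lambda\pi+\eps}\subseteq\cE_\pi$ as soon as $0<\eps\le(1-\lambda)\pi$, and $\supp\widehat{(f_{\lambda,\eps})_0}\subseteq\supp\widehat{(f_\lambda)_0}+\supp\widehat{\rho_\eps}\subseteq[-\lambda\pi-\eps,\lambda\pi+\eps]\subseteq[-\pi,\pi]$, so $f_{\lambda,\eps}\in\cB^1_\pi$ by the $L^1$--Paley--Wiener characterization recalled in the introduction. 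Moreover $(f_{\lambda,\eps})_0\in\cS$: the identity $f_0=f_0*\sinc$ (valid because $\widehat{f_0}$ is supported in $[-\pi,\pi]$) gives, upon differentiating under the convolution, $\|f_0^{(k)}\|_{L^\infty}\le\|f_0\|_{L^1}\|\sinc^{(k)}\|_{L^\infty}<\infty$ for every $k$, so $(f_\lambda)_0(x)=f_0(\lambda x)$ has all derivatives bounded on $\bbR$; multiplying such a function by $\rho_\eps\in\cS$ yields, via the Leibniz rule, an element of $\cS$. Hence $f_{\lambda,\eps}\in\cS_\pi$.

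It then remains to let $\eps\to0^+$: by dominated convergence ($|f_{\lambda,\eps}-f_\lambda|\le(1+\|\rho\|_{L^\infty})|(f_\lambda)_0|\in L^1$ and $\rho_\eps\to1$ pointwise) one obtains $\|f_{\lambda,\eps}-f_\lambda\|_{L^1}\to0$, and then the estimate $\|f_{\lambda,\eps}-f\|_{\cB^1_\pi}\le\|f_{\lambda,\eps}-f_\lambda\|_{L^1}+\|f_\lambda-f\|_{L^1}$ together with a diagonal choice (first $\lambda$ close to $1$, then $\eps<(1-\lambda)\pi$ small) shows that $\cS_\pi$ is dense in $\cB^1_\pi$.

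The only point requiring genuine care --- and essentially the only place the exponent $p=1$ enters --- is the claim that $f_{\lambda,\eps}$ has Schwartz restriction while still belonging to $\cB^1_\pi$: one must use the embedding $\cB^1_\pi\hookrightarrow L^\infty$ (here read off from $f_0=f_0*\sinc$, which also bounds all the derivatives of $f_0$) to know that $(f_\lambda)_0$ is a Schwartz multiplier, and the elementary support inclusion for the Fourier transform of a product to keep the spectrum inside $[-\pi,\pi]$. Everything else is the routine approximate-identity argument.
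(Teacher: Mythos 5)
Your proof is correct. The paper itself gives no argument here beyond citing \cite[Lemma 4.1]{fractional-Bernstein} and noting that the proof carries over verbatim to $p=1$; your dilation-plus-band-limited-mollifier scheme (compress the type to $\lambda\pi$, multiply by a Schwartz function with spectrum in $(-\eps,\eps)$, control the spectrum via the convolution support and the Schwartz property via $f_0=f_0*\sinc$ and bounded derivatives) is exactly that standard argument, spelled out completely and with the key points handled correctly.
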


We are going to need the following, most likely known, simple lemma.
\begin{lem}\label{exp-growth:lem}
Let $F\in\cE_\pi$ be such that $F(n)=0$ for all $n\in\bbZ$, and that it
satisfies condition $(Y)$: $F(iy)=o\big(|y|e^{\pi|y|}\big)$. Then, there
exists $a\in\bbC$ such that $F(z)=a\sin(\pi z)$. 
\end{lem}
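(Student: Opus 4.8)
The plan is to divide $F$ by $\sin(\pi z)$ and show that the quotient is constant, after which condition $(Y)$ pins down the constant. Since $\sin(\pi z)$ has only simple zeros, located exactly at $\bbZ$, and $F$ vanishes on $\bbZ$, the function
$$
G(z):=\frac{F(z)}{\sin(\pi z)}
$$
is entire. A routine estimate shows $G\in\cE_\pi$: one has $|\sin(\pi z)|\ge c_0>0$ whenever $|\Im z|\ge\tfrac14$, and the effect of the (at most three) integers lying within distance $\tfrac12$ of a point $z_0$ is controlled by the sub-mean value inequality for the subharmonic function $\log|G|$ on the circle of radius $\tfrac12$ about $z_0$, using $\int_0^{2\pi}\log\tfrac1{|\sin\pi(z_0+\frac12e^{i\theta})|}\,d\theta\le C$; this gives $|G(z)|\le C_\eps e^{(\pi+\eps)|z|}$ for every $\eps>0$.

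The heart of the argument is to upgrade this to: \emph{$G$ has exponential type $0$}. Here one must use that $F$ has type \emph{at most} $\pi$ essentially — the hypothesis is sharp, since $z\sin(\pi z)\in\cE_\pi$ vanishes on $\bbZ$ and is ruled out precisely by condition $(Y)$. Now $\sin(\pi z)$ is of completely regular growth with indicator $h_{\sin(\pi\cdot)}(\theta)=\pi|\sin\theta|$, so by multiplicativity of indicators $h_G(\theta)=h_F(\theta)-\pi|\sin\theta|\le\pi\bigl(1-|\sin\theta|\bigr)$ for all $\theta$, and in particular $h_G(\pm\tfrac\pi2)\le 0$ (see e.g.\ \cite{Levin-Lectures} for these facts on indicators and their trigonometric convexity). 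Applying trigonometric convexity of $h_G$ on the intervals $[\delta-\tfrac\pi2,\tfrac\pi2-\delta]$ and $[\tfrac\pi2+\delta,\tfrac{3\pi}2-\delta]$, with the endpoint bounds $h_G(\pm(\tfrac\pi2-\delta))\le\pi(1-\cos\delta)$, yields $h_G(\theta)\le\pi\tan(\delta/2)$ on those intervals; together with $h_G(\pm\tfrac\pi2)\le0$ and letting $\delta\to0^+$ one obtains $h_G\equiv0$, i.e.\ $|G(z)|\le C_\eps e^{\eps|z|}$ for every $\eps>0$.

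Next I would record the decay of $G$ along the imaginary axis: since $|\sin(\pi iy)|=\sinh(\pi|y|)\sim\tfrac12 e^{\pi|y|}$, condition $(Y)$ gives
$$
|G(iy)|=\frac{|F(iy)|}{\sinh(\pi|y|)}=o(|y|),\qquad |y|\to\infty .
$$
Finally, set $h(z):=\dfrac{G(iz)-G(0)}{z}$. This is entire, still of exponential type $0$ (dividing by $z$ does not increase the type), and by the previous display $|h(x)|=|G(ix)-G(0)|/|x|\to0$ as $|x|\to\infty$, so $h$ is bounded on $\bbR$. By Bernstein's inequality applied with an arbitrarily small bound $\sigma>0$ on the type (legitimate, as $h$ is of exponential type $0$, hence of type $\le\sigma$ for every $\sigma>0$), $\|h'\|_{L^\infty(\bbR)}\le\sigma\|h\|_{L^\infty(\bbR)}$ for all $\sigma>0$; thus $h'\equiv0$ on $\bbR$, so $h$ is constant, and since $h(x)\to0$ in fact $h\equiv0$. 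Hence $G(iz)\equiv G(0)$, i.e.\ $G$ is a constant $a\in\bbC$, and $F(z)=a\sin(\pi z)$.

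I expect the only genuine obstacle to be the second paragraph, namely showing the quotient $G$ has exponential type exactly $0$ rather than merely $\le\pi$; this is where the precise hypothesis $F\in\cE_\pi$ (not $\cE_{2\pi}$, say) enters, and it forces $h_G(\theta)\le\pi(1-|\sin\theta|)$, which combined with trigonometric convexity collapses the indicator to $0$. Everything else is standard Phragmén–Lindelöf/Bernstein bookkeeping.
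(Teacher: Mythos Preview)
Your proof is correct and follows essentially the same path as the paper's: factor $F=G\sin(\pi z)$, show $G$ has exponential type $0$ via indicator theory, then use condition $(Y)$ to pin $G$ down. The paper argues more tersely---invoking indicator \emph{diagrams} in place of your explicit trigonometric-convexity computation, and finishing with Phragm\'en--Lindel\"of applied to $G(z)/(z\pm1)$ on the two half-planes bounded by $i\bbR$ (to get $G$ at most linear) followed by a second appeal to $(Y)$ to kill the linear term---where you instead jump directly to $G$ constant via Bernstein's inequality; the skeleton is the same.
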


\proof
Let $F$ be as in the statement. There exists an entire function $G$ of
order at most $1$
such that $F(z)=\sin(\pi z)G(z)$. Denoting by $I_f$ the indicator
diagram of an entire function with respect to the order $1$ we have,
$I_F = [-i\pi,i\pi]+ I_G$. By the properties of indicator diagrams
(see~\cite[Lectures 8-9]{Levin-Lectures}) we obtain that $G$ is of exponential
type $0$, that is, $G\in\cE_0$.  Let us consider $G(z)/(z+1)$ on
the right half-plane and $G(z)/(z-1)$ on the left half-plane.  By
applying the Phragm\'en--Lindel\"of principle to these functions we
see that $G$ is a polynomial of degree at most $1$, $G(z)=a+bz$.
Therefore,
$F(z)=a\sin(\pi z) +bz\sin(\pi z)$.  But, if $b\neq0$, $F$ does not
satisfy condition $(Y)$, whence the conclusion.
\qed

\begin{thm}\label{Y-Banach:prop}
With the given norm, $\BMO(e^{-i\pi z})$  is a Banach space. 
   \end{thm}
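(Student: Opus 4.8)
The plan is to realise $\BMO(e^{-i\pi z})$ isometrically as a \emph{closed} subspace of a product of two quotient Banach spaces built from $\BMO(\bbR)$, and then to invoke the completeness of $\BMO(\bbR)$. I would begin from standard facts: $\BMO(\bbR)$ is a Banach space; the inclusion $\BMO(\bbR)\hookrightarrow\cS'/\bbC$ is continuous (equivalently, $|\la u,\phi\ra|\lesssim\|u\|_{\BMO(\bbR)}$ for every mean-zero $\phi\in\cS$); and consequently $\BMOA$ and $\ov{\BMOA}$ are closed in $\BMO(\bbR)$, because e.g.\ $[u]\in\BMOA$ iff $\la u,\phi\ra=0$ for all $\phi\in\cS$ with $\widehat\phi\in C_c^\infty\big((-\infty,0)\big)$ — such $\phi$ automatically have mean zero, so this is compatible with the quotient by constants — a condition preserved under $\BMO$-limits. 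Since the one-dimensional subspaces $\bbC e^{i2\pi(\cdot)}\subset\BMOA$ and $\bbC e^{-i2\pi(\cdot)}\subset\ov{\BMOA}$ are closed, the quotients
\[
Q_+:=\BMOA\big/\bbC e^{i2\pi(\cdot)},\qquad Q_-:=\ov{\BMOA}\big/\bbC e^{-i2\pi(\cdot)}
\]
are Banach spaces, and so is $Q_+\times Q_-$ with the sum norm.

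Next I would introduce $\iota:\BMO(e^{-i\pi z})\to Q_+\times Q_-$ sending $f$ to the pair formed by the class of $e^{i\pi(\cdot)}f_0$ in $Q_+$ and the class of $e^{-i\pi(\cdot)}f_0$ in $Q_-$. It is well defined on the quotient: if $f=ae^{i\pi z}+be^{-i\pi z}$, then $e^{i\pi(\cdot)}f_0=ae^{i2\pi(\cdot)}+b\equiv0$ in $Q_+$ and $e^{-i\pi(\cdot)}f_0=a+be^{-i2\pi(\cdot)}\equiv0$ in $Q_-$. By the very shape of \eqref{Y-norm:def}, the two infima appearing there are exactly the quotient norms in $Q_+$ and $Q_-$ of $e^{i\pi(\cdot)}f_0$ and $e^{-i\pi(\cdot)}f_0$, so $\iota$ is an isometry onto its range; in particular it is injective, which simultaneously shows that $\|\cdot\|_{\BMO(e^{-i\pi z})}$ is a genuine norm (its vanishing forces $e^{i\pi(\cdot)}f_0\in\bbC e^{i2\pi(\cdot)}+\bbC$, hence $f_0\in\operatorname{span}\{e^{\pm i\pi(\cdot)}\}$, i.e.\ $f=0$ in $\BMO(e^{-i\pi z})$). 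It therefore remains only to prove that $\ran\iota$ is closed in $Q_+\times Q_-$; this is the crux.

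I would first identify the range as
\[
\ran\iota=\Big\{(\dot u,\dot v):\ u\in\BMOA,\ v\in\ov{\BMOA},\ e^{-i\pi(\cdot)}u-e^{i\pi(\cdot)}v\in\operatorname{span}\{e^{\pm i\pi(\cdot)}\}\Big\},
\]
the displayed condition being independent of the chosen representatives. The inclusion $\subseteq$ is immediate, taking $u=e^{i\pi(\cdot)}f_0$, $v=e^{-i\pi(\cdot)}f_0$ (then the twisted difference is $f_0-f_0=0$). For $\supseteq$, given a compatible pair write $e^{-i\pi(\cdot)}u-e^{i\pi(\cdot)}v=pe^{i\pi(\cdot)}+qe^{-i\pi(\cdot)}$ and set $f_0:=e^{-i\pi(\cdot)}(u-q)=e^{i\pi(\cdot)}(v+p)$, the two expressions agreeing precisely by the compatibility relation; since $\widehat u$ is supported in $[0,\infty)$ and $\widehat v$ in $(-\infty,0]$ (as honest representatives in $\cS'$), one obtains $\supp\widehat{f_0}\subseteq[-\pi,\pi]$, so $f$ is entire with $|f(z)|\lesssim(1+|z|)^Ne^{\pi|\Im z|}$ by Paley--Wiener--Schwartz, i.e.\ $f\in\cE_\pi$, while $e^{i\pi(\cdot)}f_0=u-q$ and $e^{-i\pi(\cdot)}f_0=v+p$ represent $\dot u,\dot v$; thus $\iota(f)=(\dot u,\dot v)$.

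For the closedness, suppose $(\dot u_k,\dot v_k)\to(\dot u,\dot v)$ in $Q_+\times Q_-$ with $(\dot u_k,\dot v_k)=\iota(f_k)$; fix the representatives $u_k:=e^{i\pi(\cdot)}f_{k,0}$, $v_k:=e^{-i\pi(\cdot)}f_{k,0}$ (so $h_k:=e^{-i\pi(\cdot)}u_k-e^{i\pi(\cdot)}v_k=0$) and arbitrary representatives $u,v$, and set $h:=e^{-i\pi(\cdot)}u-e^{i\pi(\cdot)}v$. Convergence in $Q_+$ gives constants $c_k$ with $\|u_k-u-c_ke^{i2\pi(\cdot)}\|_{\BMO(\bbR)}\to0$, whence, using the continuous embedding $\BMO(\bbR)\hookrightarrow\cS'/\bbC$ and multiplication by the unimodular factor $e^{-i\pi(\cdot)}$, $\la e^{-i\pi(\cdot)}u_k-e^{-i\pi(\cdot)}u-c_ke^{i\pi(\cdot)},\psi\ra\to0$ for every $\psi\in\cS$ with $\widehat\psi(\pi)=0$; symmetrically, convergence in $Q_-$ gives $d_k$ with $\la e^{i\pi(\cdot)}v_k-e^{i\pi(\cdot)}v-d_ke^{-i\pi(\cdot)},\psi\ra\to0$ for every $\psi$ with $\widehat\psi(-\pi)=0$. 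For $\psi\in\cS$ with $\widehat\psi(\pm\pi)=0$ the correction terms $c_ke^{i\pi(\cdot)}$ and $d_ke^{-i\pi(\cdot)}$ are annihilated, so $\la h_k,\psi\ra\to\la h,\psi\ra$; but $\la h_k,\psi\ra=0$ since $h_k=0$. Hence $h$ annihilates $V:=\{\psi\in\cS:\widehat\psi(\pm\pi)=0\}$; and since $V$ is the intersection of the kernels of the two linearly independent functionals $\psi\mapsto\widehat\psi(\pm\pi)$, whose representing tempered distributions are scalar multiples of $e^{\mp i\pi(\cdot)}$, we conclude $h\in V^\perp=\operatorname{span}\{e^{\pm i\pi(\cdot)}\}$, i.e.\ $(\dot u,\dot v)\in\ran\iota$. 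Therefore $\ran\iota$ is closed, $\iota$ is an isometry onto a Banach space, and $\BMO(e^{-i\pi z})$ is complete. The step I expect to demand the most care is exactly this last one: bridging the rather weak quotient-$\BMO$ convergence and the rigidity of $\operatorname{span}\{e^{\pm i\pi z}\}$, while correctly accounting for the normalising constants absorbed along the way so that the twisted functions $e^{\mp i\pi(\cdot)}u$, $e^{\pm i\pi(\cdot)}v$ actually glue into an element of $\cE_\pi$.
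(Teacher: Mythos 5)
Your proof is correct and follows essentially the same route as the paper: both reduce completeness to that of the quotients $\BMOA/\bbC e^{i2\pi(\cdot)}$ and $\ov{\BMOA}/\bbC e^{-i2\pi(\cdot)}$, identify the two twisted limits distributionally modulo $\operatorname{span}\{e^{\pm i\pi(\cdot)}\}$, and invoke Paley--Wiener--Schwartz to produce the entire representative in $\cE_\pi$, with your isometric-embedding/closed-range packaging merely replacing the paper's direct Cauchy-sequence argument and its attained-infimum step. (One cosmetic caveat: with the paper's Fourier conventions, the test functions characterizing $\BMOA$ should have $\widehat\phi\in C^\infty_c\big((0,\infty)\big)$ rather than supported in $(-\infty,0)$ --- a harmless convention flip that does not affect the closedness argument.)
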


   \proof 
We have observed that $\BMO(e^{-i\pi z})$ is well defined.  In order
  to prove that the given one is a norm, we only need to prove the
  triangular inequality and that if $\|f\|_{\BMO(e^{-i\pi z})}=0$, then
  $f=0$.
Recall that
\begin{align*}
    \|f\|_{\BMO(e^{-i\pi z})} & =  \inf_{c_+\in\bbC} \| e^{i\pi(\cdot)}f_0 + c_+e^{i2\pi(\cdot)}\|_{\BMOA}
 + \inf_{c_-\in\bbC} \| e^{-i\pi(\cdot)}f_0 + c_-e^{-i2\pi(\cdot)}\|_{\ov{\BMOA}}    .
\end{align*}
We claim that each of the two infima is actually a
minimum.  It suffices to consider the first one.
Let $m$ be such an infimum, and
  $(c_k)$ be such that $ \| e^{i\pi(\cdot)}f_0 + c_k e^{i2\pi(\cdot)}\|_{\BMOA}\to m$ as $k\to\infty$. Then,
  $(c_k)$ must be bounded, since otherwise
$\| e^{i\pi(\cdot)}f_0 + c_k e^{i2\pi(\cdot)}\|_{\BMOA}\ge \| c_k e^{i2\pi(\cdot)}\|_{\BMOA}- \| e^{i\pi(\cdot)}f_0 \|_{\BMOA}$, which tends to $\infty$ as
      $|c_k|\to \infty$.
Then, we can assume $c_k\to c$ as $k\to\infty$. Then,  the sequence of
functions in
$\BMO(\bbR)$, $\big(  e^{i\pi(\cdot)}f_0 + c_ke^{i2\pi(\cdot)}\big)_{k\in\bbN}$ is a Cauchy sequence in $\BMO(\bbR)$, since
$$
  \big\|  e^{i\pi(\cdot)}f_0 + c_ke^{i2\pi(\cdot)} -
  \big( e^{i\pi(\cdot)}f_0 + c_j e^{i2\pi(\cdot)}\big) \|_{\BMO(\bbR)} 
= |c_k-c_j| \|e^{i2\pi(\cdot)}\|_{\BMO(\bbR)} \le 2|c_k-c_j|.
$$
This shows that each of the infima is a minimum.
Therefore, there exist $c_{f,\pm}\in \bbC$ such that 
\begin{align}\label{Y-norm-eq}
\| f\|_{\BMO(e^{-i\pi z})} & = \| e^{i\pi(\cdot)}f_0 + c_{f,+}e^{i2\pi(\cdot)}\|_{\BMOA}
             + \| e^{-i\pi(\cdot)}f_0 + c_{f,-}
e^{-i2\pi(\cdot)}\|_{\ov{\BMOA}} .
  \end{align}

  Suppose that $\| f\|_{\BMO(e^{-i\pi z})} =0$. Then, 
$f_0 =-c_{f,\pm} e^{\pm i\pi(\cdot)}$ in $\BMO(\bbR)$, so
that
$$
f = c_+ e^{i\pi(\cdot)} + c_- e^{-i\pi(\cdot)},
$$
i.e. $f=0$ in $\BMO(e^{-i\pi z})$. We observe in passing that the non-zero constans
give rise to a non-trivial linear funcionals on $\cB^1_\pi$, since
$\widehat f(0)\neq0$ in general.

Now, given $f,g\in\BMO(e^{-i\pi z})$, by~\eqref{Y-norm-eq}  we have that
\begin{align*}
\|f+g\|_{\BMO(e^{-i\pi z})} & \le 
\| e^{i\pi(\cdot)}(f_0 +g_0)+ (c_{f,+}+c_{g,+}) e^{i2\pi(\cdot)}\|_{\BMOA}\\
&\qquad\qquad + \| e^{-i\pi(\cdot)}(f_0 +g_0) + (c_{f,-}+c_{g,-})e^{-i2\pi(\cdot)}\|_{\ov{\BMOA}}  \\
& \le \| f\|_{\BMO(e^{-i\pi z})} +\| g\|_{\BMO(e^{-i\pi z})} .
\end{align*}
Finally, we check that $\BMO(e^{-i\pi z})$ is complete in its norm. Let
$\{f_{(n)}\}$ be a Cauchy sequence in $\BMO(e^{-i\pi z})$. By the definition of the
norm in $\BMO(e^{-i\pi z})$, see \eqref{Y-norm:def}, it then follows that 
$\{ e^{i\pi(\cdot)} f_{(n)}\}$ is a Cauchy sequence in
$\BMOA/\operatorname{span}\{e^{i2\pi(\cdot)}\}$, and 
$\{ e^{-i\pi(\cdot)} f_{(n)}\}$ is a Cauchy sequence in
$\ov{\BMOA}/\operatorname{span}\{e^{-i2\pi(\cdot)}\}$.
Such spaces are complete since, for instance,  $\BMOA/\operatorname{span}\{e^{i2\pi(\cdot)}\}$ is
 is the dual space of $\{f\in H^1(\bbC^+):\,
\widehat f(2\pi)=0\}$.  Therefore, there exist 
$F\in\BMOA/\operatorname{span}\{e^{i2\pi(\cdot)}\}$ and $G\in
\ov{\BMOA}/\operatorname{span}\{e^{-i2\pi(\cdot)}\}$
such that
$$
 e^{i\pi(\cdot)} f_{(n)}\to F \text{ in }
 \BMOA/\operatorname{span}\{e^{i2\pi(\cdot)}\},\qquad
 e^{-i\pi(\cdot)} f_{(n)} \to G \text{ in } \ov{\BMOA}/\operatorname{span}\{e^{-i2\pi(\cdot)}\}
$$
as $n\to\infty$. It follows that,
$$
e^{i\pi(\cdot)} f_{(n)}\to F \text{ in }
\cS'/\operatorname{span}\{e^{i2\pi(\cdot)}, 1\},
\qquad
 e^{-i\pi(\cdot)} f_{(n)} \to G \text{ in } \cS'/\operatorname{span}\{e^{-i2\pi(\cdot)},1\}.
$$
Notice again that $\cS'/\operatorname{span}\{e^{i2\pi(\cdot)}, 1\}$ is
the dual of $\{\psi\in\cS:\, \widehat\psi(2\pi)=\widehat\psi(0)=0\}$,
and similarly for $\cS'/\operatorname{span}\{e^{-i2\pi(\cdot)},1\}$.
The convergences  imply that
$$
f_{(n)}\to e^{-i\pi(\cdot)}F\text{ in }
\cS'/\operatorname{span}\{e^{i\pi(\cdot)}, e^{-i\pi(\cdot)}\},
\qquad
f_{(n)}\to e^{i\pi(\cdot)}G\text{ in }
\cS'/\operatorname{span}\{e^{i\pi(\cdot)}, e^{-i\pi(\cdot)}\}.
$$
We conclude that
\begin{equation}\label{f-tilde:def}
e^{-i\pi(\cdot)}F=
e^{i\pi(\cdot)}G=:\widetilde f\in\cS'/\operatorname{span}\{e^{i\pi(\cdot)},
e^{-i\pi(\cdot)}\}.
\end{equation}
Having constructed the limit $f$ as an element of
$\cS'/\operatorname{span}\{e^{\pm i\pi(\cdot)}\}$, we want to show that 
$\widetilde  f$
(in fact, any representative in its
equivalence class) is
the restriction of (an) $f\in\cE_\pi$, that is, $\widetilde f=f_0$,
where $f\in\BMO(e^{-i\pi z})$.   Since $F\in \BMOA$ and $G\in\ov{\BMOA}$,
$$
\supp (\widehat{e^{-i\pi(\cdot)}F})\subseteq[-\pi,\infty), \qquad
\supp (\widehat{e^{i\pi(\cdot)}G})\subseteq(-\infty,\pi],
$$
we have that
$$
\supp (\widehat{\widetilde f})\subseteq[-\pi,\pi].
$$
The classical Paley--Wiener--Schwartz theorem (see e.g.~\cite[Theorem
2, Chapter I]{Triebel}), $\widetilde f$ is indeed the restriction to
$\bbR$ of a (unique) function $f\in\cE_\pi$. By the definition of
$\widetilde f$ \eqref{f-tilde:def} it is clear that $f\in\BMO(e^{-i\pi z})$.
Finally, 
\begin{align*}
\|f_{(n)}-f\|_{\BMO(e^{-i\pi z})}
& = \inf_{c,d\in\bbC} \| e^{i\pi(\cdot)}(f_{(n)})_0- F + ce^{i2\pi(\cdot)}\|_{\BMOA}
 +  \| e^{-i\pi(\cdot)}(f_{(n)})_0- G + de^{-i2\pi(\cdot)}\|_{\ov{\BMOA}} \\
  & = \| e^{i\pi(\cdot)}(f_{(n)})_0- F \|_{\BMOA/\operatorname{span}\{e^{i2\pi(\cdot)}\}}
    +\| e^{-i\pi(\cdot)}(f_{(n)})_0- G\|_{\ov{\BMOA} /\operatorname{span}\{e^{-i2\pi(\cdot)}\} } \\
  & \to 0,
\end{align*}
as $n\to\infty$. This completes the proof. \qed
\ms

\proof[Proof of Theorem~\ref{main-thm1bis}]
(1) We show that $P_\pi:L^\infty \to \BMO(e^{-i\pi z})$ is bounded.  We adopt the
following notation: Given an interval $I$, we denote by
$P_I$ the orthogonal projection of $L^2(\bbR)$ onto the subspace
of functions with support in $I$.  If $I=[-\kappa,\kappa]$, we
keep writing $P_\kappa$ in place of  $P_{[-\kappa,\kappa]}$.  If
$I=[0,\infty)$  ($I=(-\infty,0]$, resp.), we write $P_+$ ($P_-$,
resp.).  We recall that $P_\pm:L^\infty\to\BMOA^\pm$, where, for later
convenience, we write $ \BMOA^+=\BMOA$, and $\BMOA^-=\ov{\BMOA}$, are
bounded and surjective, see e.g.~\cite{Garnett,Nikolski1}.\footnote{Notice
that, when acting on $\cS'$, $P_{[0,\infty)}=P_{(0,\infty)}$ modulo
polynomials. Hence, when acting on $L^\infty$ the two operators differ
by a constant, hence coincide having values in $\BMOA$.}
We also recall that for $\vp\in L^\infty(\bbR)$, $P_+\vp$ is defined
as follows.  Let $R>0$ and $\ov D_R=\ov{D(0,R)}\subseteq\bbC$ be a fixed
compact set, and let $I=\ov D_R\cap\bbR$, $I^*= \ov D_{3R}\cap\bbR$. Let
$\vp_1=\vp\chi_{I^*}$ and $\vp_2=\vp-\vp_1=\vp\chi_{(I^*)^c}$.
Then, for $z\in \ov D_R$ given, 
\begin{equation}\label{Ppi-on-Linfty}
P_+ \vp(z) = \frac{1}{2\pi i} \operatorname{p.v.}\int_{I^*}
\vp(t)\frac{1}{t-z}\,dt + \frac{1}{2\pi i} \int_{(I^*)^c} \vp(t)\Big[ \frac{1}{t-z}-\frac1t\Big]
\, dt .
\end{equation} 
We recall that $P_+ \vp$ is well defined  on $\ov D_R$ 
up to an additive constant $c$.\footnote{For, if $R'>R$, and 
  $P_+\vp$ is defined for $z\in\ov D_{R'}$ as in
  \eqref{Ppi-on-Linfty}, then on $\ov D_R$ the two definitions differ
  by the constant
$c = \int_{I_{R'}^*\setminus I_R^*} \vp(t)/t \, dt$, where $I_{R'}^*=[-3R',3R']$.
}

\ms

We first observe that on $L^2(\bbR)$
\begin{align}
P_\pi & = e^{-i\pi(\cdot)}P_{[0,2\pi]}(e^{i\pi(\cdot)} \cdot)
= e^{-i\pi(\cdot)}\big( P_+ - P_{(2\pi,\infty)}\big)(e^{i\pi(\cdot)}\cdot)\notag\\
& = e^{-i\pi(\cdot)}\Big( P_+  - e^{2i\pi(\cdot)} 
P_+\big(e^{-2i\pi(\cdot)}\cdot) \Big) 
 (e^{i\pi(\cdot)}\cdot) \notag\\
& = e^{-i\pi(\cdot)} P_+ (e^{i\pi(\cdot)}\cdot) -  e^{i\pi(\cdot)} P_+ (e^{-i\pi(\cdot)}\cdot) .\label{1st-ob}
\end{align}
Analogously,
\begin{align}\label{2nd-ob}
P_\pi  & = e^{i\pi(\cdot)} P_- (e^{-i\pi(\cdot)}\cdot) -  e^{-i\pi(\cdot)} P_- (e^{i\pi(\cdot)}\cdot) .
\end{align}
Since $P_\pm$ extend to bounded linear operators from $L^\infty$ into
$\BMO$, we may use~\eqref{1st-ob}, or equivalently~\eqref{2nd-ob}, to
define $P_\pi$ on $L^\infty$. If $\vp\in L^\infty$, $R>0$ and $z\in\ov
D_R$, then using~\eqref{Ppi-on-Linfty} we see that
\begin{align}\label{3rd-ob}
 P_\pi \vp(z) & = e^{-i\pi z} P_+ (e^{i\pi(\cdot)}\vp)(z) -  e^{i\pi z} P_+ (e^{-i\pi(\cdot)}\vp)(z)
\notag\\
& =  \frac{1}{2\pi i}  \operatorname{p.v.}\int_{I^*}
\vp(t)\frac{e^{i\pi(t- z)} }{t-z}\,dt +  \frac{1}{2\pi i} \int_{(I^*)^c}
           \vp(t) e^{i\pi(t- z)}\Big[ \frac{1}{t-z}-\frac1t\Big]
\, dt \notag \\
& \qquad -  \frac{1}{2\pi i}  \operatorname{p.v.}\int_{I^*}
\vp(t)\frac{e^{i\pi(z-t)} }{t-z}\,dt -  \frac{1}{2\pi i} \int_{(I^*)^c}
           \vp(t) e^{i\pi(z-t)}\Big[ \frac{1}{t-z}-\frac1t\Big]
\, dt \notag \\
& = \int_{I^*} \vp(t)\sinc(t- z) \,dt +\int_{(I^*)^c}
           \vp(t) \frac{\sin(\pi(t-z))}{\pi}\Big[
                  \frac{1}{t-z}-\frac1t\Big] \,dt .
\end{align}
It is simple to check that $P_+\vp$ is defined on each closed strip
$\{z:\, |\Re z|\le R\}$,
 modulo a constant.

Notice that, since for $\vp\in
L^\infty$, $P_\pm\vp$ is defined only up to an additive constant,
$P_\pi\vp$ is defined only modulo
$\operatorname{span}\{e^{i\pi(\cdot)},e^{-i\pi(\cdot)}\}$.  It is then 
clear that $P_\pi$ maps $L^\infty$ into $\cE_\pi/ \operatorname{span}\{e^{i\pi(\cdot)},e^{-i\pi(\cdot)}\}$. \ms

We now observe that the boundary values of
functions in $H^\infty(\bbC^+)$ are multipliers of $\BMOA$, since they
are bounded multipliers of $H^1(\bbC^+)$.  In particular, $e^{\pm
  i2\pi(\cdot)}$ is a multiplier of $\BMOA^\pm$, resp., with norm $\le1$.

Let $\vp\in L^\infty$.  Then, using~\eqref{1st-ob} we see that
\begin{align*}
\| e^{i\pi(\cdot)} P_\pi \vp\|_{\BMO} 
& \le  \|P_+ (e^{i\pi(\cdot)}\vp)\|_{\BMO}  +\|  e^{2i\pi(\cdot)} P_+ (e^{-i\pi(\cdot)}\vp)\|_{\BMO}  \\
& \le  \|e^{i\pi(\cdot)}\vp\|_{L^\infty} +\|e^{-i\pi(\cdot)}\vp\|_{L^\infty}  =2 \|\vp\|_{L^\infty} .
\end{align*}
Using~\eqref{2nd-ob} we obtain that $\| e^{-i\pi(\cdot)} P_\pi
\vp\|_{\BMO} \le 2 \|\vp\|_{L^\infty} $ so that,
\begin{align*}
\| P_\pi \vp\|_{\BMO(e^{-i\pi z})}
& \le \| e^{i\pi(\cdot)} P_\pi \vp\|_{\BMO} + \| e^{-i\pi(\cdot)} P_\pi
\vp\|_{\BMO}\\
& \le 4 \|\vp\|_{L^\infty}.  
\end{align*}
Hence, $P_\pi: L^\infty\to \BMO(e^{-i\pi z})$ is bounded.  In order to show that it
is surjective, let $f\in\BMO(e^{-i\pi z})$.  Then, $e^{\pm i\pi(\cdot)}f_0\in
\BMOA^\pm$, so that there exist $\vp_\pm\in L^\infty$ such that
\begin{equation}\label{as-in-eq}
e^{ i\pi(\cdot)}f_0 = P_+ \vp_+,\quad\text{and}\quad  e^{- i\pi(\cdot)}f_0 = P_- \vp_-,.
\end{equation}
Therefore,
$$
f_0  = e^{- i\pi(\cdot)} P_+ \vp_+ =
  P_{[-\pi,\infty)}(e^{-i\pi(\cdot)}\vp_+)
  \quad\text{and also}\quad  
f_0  = e^{ i\pi(\cdot)} P_- \vp_- =
  P_{(-\infty,\pi]}(e^{i\pi(\cdot)}\vp_-).
$$
However, $\supp(\widehat{f_0})\subseteq[-\pi,\pi]$, so that the above
equalities imply that $\supp(\widehat{ e^{-i\pi(\cdot)}\vp_+})\subseteq(-\infty,\pi]$, 
and $\supp(\widehat{
  e^{i\pi(\cdot)}\vp_-})\subseteq[-\pi,\infty)$.
Therefore,
$$
f_0  = 
  P_\pi (e^{-i\pi(\cdot)}\vp_+) =
  P_\pi (e^{i\pi(\cdot)}\vp_-);
     $$
     hence $P_\pi:L^\infty \to \BMO(e^{-i\pi z})$ is onto.
     
Next, since $P_\pi:L^\infty\to \BMO(e^{-i\pi z})$ is bounded,
$$\|f\|_{\BMO(e^{-i\pi z})} \le 4\inf\{  \|\vp\|_{L^\infty}:\, \vp\in
L^\infty,\, P_\pi \vp =f\}.$$
On the other hand,
if
$g \in \BMOA$, then
$\|g\|_{\BMOA}\approx\inf\{\|\vp\|_{L^\infty}:\, \vp\in
L^\infty,\, P_+ \vp =g\}$, and analogously if $g \in \ov{\BMOA}$.
Therefore, since $e^{i\pi(\cdot)}f_0+ c_{f,+} e^{i2\pi(\cdot)} \in
\BMOA$, and $e^{-i\pi(\cdot)}f_0+ c_{f,-} e^{-i2\pi(\cdot)} \in
\ov{\BMOA}$,
there exist a constant $C_1>0$ and $\vp_\pm\in L^\infty$ 
such that~\eqref{as-in-eq} holds and such that $\|
\vp_\pm\|_{L^\infty}  \le C_1\| e^{\pm
  i\pi(\cdot)}f_0 +c_{f,\pm}
    e^{\pm i2\pi(\cdot)} \|_{\BMO(\bbR)}$. Then,
\begin{align*}
  \| e^{i\pi(\cdot)}\vp_+\|_{L^\infty}
  & = \|\vp_+\|_{L^\infty} \le C_1 \| e^{  i\pi(\cdot)}f_0+c_{f,+}
    e^{ i2\pi(\cdot)} \|_{\BMO(\bbR)}
 \le C_1 \|f\|_{\BMO(e^{-i\pi z})},
\end{align*}
as we wished to show.
\ms

We now turn to the identification of $(\cB^1_\pi)^*$ with $\BMO(e^{-i\pi z})$. Let 
$f\in\BMO(e^{-i\pi z})$, and let $\vp\in L^\infty$ such that $f=P_\pi\vp$ and
$\|\vp\|_{L^\infty}\le C_1\|f\|_{\BMO(e^{-i\pi z})}$.  Then, letting $L_f$ be
initially defined on $\cS_\pi$, we have that 
\begin{align*}
 | L_f(h) |
  & = |\int_\bbR h(x)f(x)\, dx | = |\int_\bbR h(x) P_\pi \vp (x)\, dx
    | = |\la h,P_\pi \vp\ra|\\
& = | \la \widehat{ h(-\cdot)}, \widehat{P_\pi \vp}\ra|  = | \la \widehat { h(-\cdot)}, \widehat{\vp}\ra| \\
& \le \|h\|_{L^1} \|\vp\|_{L^\infty}  \le C_1 \|h\|_{\cB^1_\pi} \|f\|_{\BMO(e^{-i\pi z})},
\end{align*}
so that $\|L_f\|_{(\cB^1_\pi)^*}\le C_1 \|f\|_{\BMO(e^{-i\pi z})}$.

Conversely, if $L\in (\cB^1_\pi)^*$, by Hahn--Banach there exists
$\vp\in L^\infty$ such that $L(h)=\int_\bbR h\vp\, dm$ for all
$h\in\cB^1_\pi$, with $\|\vp\|_{L^\infty} =\|L\|_{(\cB^1_\pi)^*}$. However,
\begin{align*}
  \int_\bbR h(x)\vp(x)\, dx & = \int_\bbR h(x)P_\pi(\vp(-\cdot))(-x)\, dx
   =      \int_\bbR h(x)P_\pi\vp(x)\, dx    =: \int_\bbR h(x)f(x)\, dx
\end{align*}
where $f =P_\pi\vp\in \BMO(e^{-i\pi z})$.  Then,
$L=L_f$.  Moreover, 
$$
\|f\|_{\BMO(e^{-i\pi z})} = \|P_\pi\vp\|_{\BMO(e^{-i\pi z})} \le 4 \|\vp\|_{L^\infty} =
4\|L_f\|_{(\cB^1_\pi)^*}.
$$
This proves the theorem.\qed
\ms

\section{Proof of Theorem~\ref{main-thm2}}\label{Pf-Thm2:sec}

 Recall the definition~\eqref{Hankel-op:eq} of
Hankel operator $H_\vp$ on
$\cB^2_\kappa$, having as symbol the locally integrable function $\vp$
which is also a tempered distribution 
on $\bbR$.  Notice that $H_\vp$ is well defined on the dense subset
$\cS_\kappa$ of $\cB^2_\kappa$.  We wish to characterize the symbols
$\vp$ for which $H_\vp$ extends to a bounded linear operator on 
$\cB^2_\kappa$. 
For general facts about Hankel operators on Hardy spaces we
refer to~\cite{Nikolski1}.

The space $\cB^2_\kappa$ is endowed with the natural involution
$f\mapsto f^\#$, where $f^\#(z)=\ov{f(\ov z)}$. Hence, for $x\in\bbR$, $f_0^\#(x)=\ov{f_0(x)}$. Then,
 for
$f\in\cB^2_\kappa$, arguing formally for the time being, we have that 
\begin{align*}
  H_\vp f(z) & = \cF^{-1} \big( \chi_{[-\kappa,\kappa]}\cF(\vp f_0^\#)\big)(z)\\
  & = \frac{1}{\sqrt{2\pi}} \int_{-\kappa}^\kappa e^{iz\xi}
            (\widehat{\vp}* \widehat{f_0^\#} )(\xi)\, d\xi\\ 
& = \frac{1}{\sqrt{2\pi}} \int_{-\kappa}^\kappa e^{iz\xi} \int_{-\kappa}^\kappa 
\widehat{\vp}(\xi-t)\widehat{f_0^\#}(t)\, dt \, d\xi.
\end{align*}
Therefore, the operator $H_\vp$ remains unchanged if we replace $\vp$
with $\cF^{-1}\big(\chi_{[-2\kappa,2\kappa]}\widehat{\vp}\big)$.
Therefore, we may assume, and we do so in what follows, that $\supp(\widehat\vp)\subseteq
[-2\kappa,2\kappa]$.\footnote{We recall that it is not in general true that
$H_\vp=H_{P_\kappa \vp}$ since $\cE_\kappa$ is not an algebra.}
\ms

Let now $\eta_L\in C^\infty_c(\bbR)$ be such that $\supp\eta\subseteq
[-4\kappa,-\kappa/2]$, $\eta\ge0$,  and $\eta=1$ on
$[-3\kappa, -\kappa]$.  Let $\eta_R=\eta_L(-\cdot)$ and $\eta_C=
\chi_{[-2\kappa,2\kappa]}-\eta_L-\eta_R$.  Define then
$\vp_L=\cF^{-1}(\eta_L\widehat\vp)$, and analogously
$\vp_R=\cF^{-1}(\eta_R \widehat\vp)$ and
$\vp_C=\cF^{-1}(\eta_C\widehat\vp)$.
We say that a symbol $\psi$ is {\em analytic} if
$\widehat\psi$ is supported in $[0,\infty)$, and analogously, that it
is {\em anti-analytic} if $\widehat\psi$ is supported in
$(-\infty,0]$.
Then, $\vp_L$ is anti-analytic and $\vp_R$ analytic.

In~\cite[Theorem 4.1]{Rochberg} it is proved that $H_\vp$ is bounded
if and only if
\begin{itemize}
  \tbi $P_- \big( e^{-2i\kappa(\cdot)}\vp_L\big) \in \BMO(\bbR)$;
  \tbi $P_+ \big( e^{2i\kappa(\cdot)}\vp_R\big) \in \BMO(\bbR)$;
  \tbi $\vp_C\in L^\infty (\bbR)$.
\end{itemize}
Moreover,
$$
\| H_\vp\|_{\cB^2_\kappa\to\cB^2_\kappa} \approx \|P_- \big(
e^{-2i\kappa(\cdot)}\vp_L\big)\|_{\BMO} + \|P_+ \big(
e^{2i\kappa(\cdot)}\vp_R\big)\|_{\BMO} +\|\vp_C\|_{L^\infty}. \ms
$$

Another key fact we are going to use is a factorization decomposition
of $\cB^1_\kappa$, obtained by A.\ Baranov, R.\  Bessonov, and V.\
Kapustin~\cite[Proposition 4.1, and Section 7]{Baranov-Bessonov-Kapustin} (as corollary of a more
general result).  Given any $h\in \cB^1_{2\kappa}$, there exist
$f_j,g_j\in\cB^2_\kappa$, $j=1,\dots,4$, such that
$h =\sum_{j=1}^4 f_jg_j$
and
\begin{equation}\label{fact-B1}
\|h\|_{\cB^1_{2\kappa}}\approx \inf \Big\{ \sum_{j=1}^4
\|f_j\|_{\cB^2_\kappa}\|g_j\|_{\cB^2_\kappa}:\, h =\sum_{j=1}^4 f_jg_j  \Big\}.
\end{equation}
It is easy to see that if $h\in\cS_{2\kappa}$, it is possible to
select $f_j,g_j \in \cS_\kappa$ such that~\eqref{fact-B1}
holds. \ms

\proof[Proof of Theorem~\ref{main-thm2}] 
We are going to use the above mentioned results with $\kappa=\pi/2$.
Observe that if $\vp$ is as in 
(2), since we are assuming, as we may, that
$\supp(\widehat\vp)\subseteq[-\pi,\pi]$, 
 there exists $f\in\cE_\pi$ such that $\vp=f_0$.
Notice that Theorem~\ref{main-thm1bis} shows that (1) and (2) are
equivalent.\ms

$(3)\Rightarrow (2)$.  Given $h\in\cS_\pi$, let $h =\sum_{j=1}^4
f_jg_j$ be as in~\eqref{fact-B1}, with $f_j,g_j\in\cB^2_{\pi/2}$.  By Lemma~\ref{density-lem}, without loss of
generality we may assume $f_j,g_j$ to be in
$\cS_{\pi/2}$. Then,
\begin{align*}
\la h , \vp\ra & = \big\la \sum_{j=1}^4f_jg_j  ,\vp \big\ra\\
& =\sum_{j=1}^4 \int_\bbR f_j(x)  g_j(x) \vp(x) \,dx =
 \sum_{j=1}^4 \int_\bbR f_j(x)  \ov{P_{\pi/2} (\ov{g_j} \ov{\vp})(x)} \,dx  \\
& = \sum_{j=1}^4 \int_\bbR f_j(x)  \ov{H_{\ov\vp} g_j(x)}\, dx.
\end{align*}
Therefore,
$$
|\la h ,\vp\ra | \le \sum_{j=1}^4 \|f_j\|_{\cB^2_{\pi/2}}\| H_{\ov\vp} \|_{\cB^2_{\pi/2}\to\cB^2_{\pi/2}}
\|g_j\|_{\cB^2_{\pi/2}} \le C_2 \|h\|_{\cB^1_\pi} \| H_{\ov\vp} \|_{\cB^2_{\pi/2}\to\cB^2_{\pi/2}},
$$
so that $\vp\in (\cB^1_\pi)^*$, and  moreover $\|\vp\|_{(\cB^1_\pi)^*}\le C_2\| H_{\ov\vp}
\|_{\cB^2_{\pi/2}\to\cB^2_{\pi/2}}$.  We observe that
$$
H_{\ov\vp} f = H_\vp f^\#
$$
and since $f\mapsto f^\#$ is an isometry on the Paley--Wiener (and
Bernstein) spaces, $H_{\ov\vp}$ is bounded if and only if $ H_\vp $
is, with equality of norms.
\ms

$(1)\Rightarrow (3)$.  If $f,g\in\cB^2_{\pi/2}$, then
\begin{align*}
 | \la H_\vp f \,|\, g \ra |
 & = |\la \vp \,|\, fg \ra  | = | \la P_\pi \widetilde{\vp}\,|\, fg\ra  | 
= | \la  \widetilde{\vp}\,|\, fg\ra  | \le \|\widetilde{\vp}\|_{L^\infty}
\|fg\|_{\cB^1_\pi}\\
& \le \|\widetilde{\vp}\|_{L^\infty}\|f\|_{\cB^2_{\pi/2}}\|g\|_{\cB^2_{\pi/2}}.
\end{align*}
Therefore, $H_\vp$ is bounded on $\cB^2_{\pi/2}$ and $\| H_\vp
\|_{\cB^2_{\pi/2}\to\cB^2_{\pi/2}}\le \inf\{
\|\widetilde{\vp}\|_{L^\infty}:\, \vp =P_\pi \widetilde{\vp},\,
\widetilde{\vp}\in L^\infty\}$. \ms

Finally, we have shown that
$$
\|\vp\|_{(\cB^1_\pi)^*}\le C_2\| H_\vp \|_{\cB^2_{\pi/2}\to\cB^2_{\pi/2}}
\le  C_2\inf\{ \|\widetilde{\vp}\|_{L^\infty}:\, \vp =P_\pi \widetilde{\vp},\, \widetilde{\vp}\in L^\infty\} 
.
$$
From Theorem~\ref{main-thm1bis} we also have that $\vp=f_0$ for some
$f\in\BMO(e^{-i\pi z})$ and
$$
\|\vp\|_{(\cB^1_\pi)^*} \approx \|f\|_{\BMO(e^{-i\pi z})} \approx 
\inf\{ \|\widetilde{\vp}\|_{L^\infty}:\, f =P_\pi \widetilde{\vp},\, \widetilde{\vp}\in L^\infty\} .
$$
The stated norm equivalences now follow.
\qed \ms

\section{Discrete nature of the pairing of duality}\label{Pf-Thm1:sec}

We now turn to the discrete characterization of $(\cB^1_\pi)^*$.
We begin with some properties of
$\cX_\alpha$.

\begin{prop}\label{Xalpha-Banach:prop}
   For $\alpha\in[0,1)$, $\cX_\alpha$  is a Banach space. 
   \end{prop}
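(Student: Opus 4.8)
The plan is to mimic the proof of Theorem~\ref{Y-Banach:prop} (i.e.\ that $\BMO(e^{-i\pi z})$ is Banach), since $\cX_\alpha$ is defined in an entirely parallel fashion: one has an affine map $f\mapsto e^{i\pi(\cdot+\alpha)}f(\cdot+\alpha)$ landing in $\BMO(\bbZ)$, a quotient by $\operatorname{span}\{e^{\pm i\pi(\cdot)}\}$, and a norm obtained by optimizing over representatives. First I would verify that the given quantity is genuinely a norm. The only nontrivial points are the triangle inequality and the claim that $\|f\|_{\cX_\alpha}=0$ forces $f=0$ in $\cX_\alpha$. For the triangle inequality, note (as the definition already records) that the optimization over $c_\pm$ is actually vacuous: since $e^{i\pi(\cdot+\alpha)}\cdot e^{\pm i\pi(\cdot)}$ restricted to $\bbZ$ is (up to the unimodular constant $e^{i\pi\alpha}$) just the sequence $((-1)^n)$ or the constant sequence, which lies in $\operatorname{span}\{1\}$ and hence has $\BMO(\bbZ)$-seminorm zero, the normalized expression equals $\|(e^{i\pi(\cdot+\alpha)}f(\cdot+\alpha))|_\bbZ\|_{\BMO(\bbZ)}$. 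Thus subadditivity is immediate from subadditivity of the $\BMO(\bbZ)$-seminorm and linearity of $f\mapsto (e^{i\pi(\cdot+\alpha)}f(\cdot+\alpha))|_\bbZ$.

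Next, suppose $\|f\|_{\cX_\alpha}=0$. Then the sequence $b_n := e^{i\pi(n+\alpha)}f(n+\alpha)$ has vanishing $\BMO(\bbZ)$-seminorm, hence is constant, say $b_n\equiv c$. Writing $c = e^{i\pi\alpha}(\tfrac12(c'e^{i\pi\alpha}+c''e^{-i\pi\alpha}))$ — more cleanly: the function $g(z):=c\,e^{-i\pi\alpha}e^{-i\pi z}$ lies in $\cE_\pi$, satisfies condition $(Y)$, and $g(n+\alpha) = c\,e^{-i\pi\alpha}e^{-i\pi(n+\alpha)} = c(-1)^n e^{-2\pi i\alpha}$; one sees $f$ and an explicit element of $\operatorname{span}\{e^{\pm i\pi(\cdot)}\}$ agree on $\bbZ+\alpha$. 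So it suffices to know that $F\in\cE_\pi$ with $F(iy)=o(|y|e^{\pi|y|})$ and $F\equiv0$ on $\bbZ+\alpha$ forces $F\in\operatorname{span}\{e^{\pm i\pi(\cdot)}\}$, i.e.\ $F(z)=a\sin\pi(z-\alpha)$. This is precisely Lemma~\ref{exp-growth:lem} applied to $F(\cdot+\alpha)$ (which still satisfies condition $(Y)$ since $|F(i y+\alpha)|\le C e^{\pi|y|}$ times a bounded factor and the $o(\cdot)$ persists), so $F(z)=a\sin\pi(z-\alpha)\in\operatorname{span}\{e^{i\pi z},e^{-i\pi z}\}$. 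Hence $f=0$ in $\cX_\alpha$.

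For completeness, let $\{f_{(k)}\}$ be Cauchy in $\cX_\alpha$. Then $b^{(k)} := (e^{i\pi(n+\alpha)}f_{(k)}(n+\alpha))_n$ is Cauchy in $\BMO(\bbZ)$ modulo constants; since $\BMO(\bbZ)/\text{constants}$ is complete (being the dual of $H^1(\bbZ)$, as used earlier in the paper), there is $b\in\BMO(\bbZ)$ with $b^{(k)}\to b$ mod constants. Set $a_n := (-1)^n e^{-i\pi\alpha} b_n$; then $a\in\BMO(\bbZ)$ and I would take $f := T_\alpha$-type reconstruction — but at this stage Theorem~\ref{main-thm1} is not yet available, so instead I argue directly: the sequence $f_{(k)}(\cdot+\alpha)|_\bbZ$ converges in $\BMO(\bbZ)$ mod constants, and by the interpolation/sampling structure of $\cE_\pi$-functions of this growth (Lemma~\ref{exp-growth:lem} gives uniqueness up to $\sin\pi(\cdot-\alpha)$), the $f_{(k)}$ converge locally uniformly, modulo $\operatorname{span}\{e^{\pm i\pi(\cdot)}\}$, to some $f\in\cE_\pi$ satisfying $(Y)$ with $(e^{i\pi(\cdot+\alpha)}f(\cdot+\alpha))|_\bbZ\in\BMO(\bbZ)$, and $\|f_{(k)}-f\|_{\cX_\alpha}\to0$ because this norm is just the $\BMO(\bbZ)$-mod-constants norm of the sampled sequences.

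The main obstacle is completeness: one must produce the limit \emph{as an entire function of exponential type $\pi$} from convergence of the sampled sequences in $\BMO(\bbZ)$, and then check the growth condition $(Y)$ survives the limit. If one is willing to invoke Theorem~\ref{main-thm1} (that $T_\alpha:\BMO(\bbZ)\to\cX_\alpha$ is an isomorphism), completeness is automatic — $\cX_\alpha$ is then linearly homeomorphic to the Banach space $\BMO(\bbZ)/\text{constants}$ — but that theorem is proved later, so the cleanest self-contained route is to prove directly that $T_\alpha$ (Definition~\ref{Talpha:defn}) maps $\BMO(\bbZ)$ boundedly \emph{into} $\cX_\alpha$ and is a bijection onto it with bounded inverse given by sampling, which is exactly what Section~\ref{Pf-Thm1:sec} sets up; I would therefore defer the completeness half of this proposition to (or cite forward to) that construction, and at this point prove only the normed-space axioms, remarking that completeness follows once $T_\alpha$ is shown to be an isomorphism.
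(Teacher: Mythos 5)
Your verification of the norm axioms follows the paper's own argument essentially verbatim: the observation that the infimum over $c_\pm$ is vacuous because $e^{i\pi(\cdot+\alpha)}e^{\pm i\pi(\cdot)}$ samples to a constant sequence on $\bbZ$, the reduction of the zero-norm case to Lemma~\ref{exp-growth:lem} after translating by $\alpha$, and subadditivity from the $\BMO(\bbZ)$ seminorm; moreover, the paper's proof of this proposition in fact stops at the norm axioms and never treats completeness, so your decision to defer completeness to the $T_\alpha$ isomorphism of Theorem~\ref{main-thm1} is consistent with (indeed more candid than) the paper's treatment. The only blemish is the harmless constant slip in your explicit comparison function: to match $f(n+\alpha)=c\,e^{-i\pi(n+\alpha)}$ you should take $g(z)=c\,e^{-i\pi z}$ rather than $c\,e^{-i\pi\alpha}e^{-i\pi z}$, which does not affect the argument.
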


  \proof We have observed that $\cX_\alpha$ is well defined.  In order
  to prove that the given one is a norm, we only need to prove the
  triangular inequality and that if $\|f\|_{\cX_\alpha}=0$, then
  $f=0$.
  
Observe that $\operatorname{span}\{e^{\pm
        i\pi(\cdot)} \}=
      \operatorname{span}\{\cos\pi(\cdot),\sin\pi(\cdot) \}$ and that
\begin{align}
  \|f\|_{\cX_\alpha}
  & = \inf_{c_\pm\in\bbC} \big\|
      e^{i\pi(\cdot+\alpha)} \big( f(\cdot+\alpha)
  +c_+ e^{i\pi(\cdot)}+c_- e^{-i\pi(\cdot)}\big) \big\|_{\BMO(\bbZ)}\notag\\
&  = \big\|
      (-1)^{(\cdot)} \big( f(\cdot+\alpha)
  \big)\big\|_{\BMO(\bbZ)}. \label{norm-id-Xalpha}
\end{align}
  Suppose that $\| f\|_{\cX_\alpha} =0$. Then, 
$\big( (-1)^{(\cdot)} \big( f(\cdot+\alpha)\big)=
 0$ in $\BMO(\bbZ)$.
It is then easy to see that (for every representative of $f$ in its
equivalence class),
there exist constants $a,b\in\bbC$ such
that 
if we set
$$
F(z):= f(z+\alpha) + a\cos(\pi z) + b\sin(\pi z)
$$
then $F$ vanishes at all points $n\in\bbZ$.  Moreover, any such $F$ satisfies condition
 $(Y)$ of Lemma~\ref{exp-growth:lem} and therefore $F(z) =c\sin(\pi
z)$ for some $c\in\bbC$. This implies that
$f(\cdot+\alpha)\in\operatorname{span}\{e^{\pm i(\cdot)}\}$, that is, $f=0$ in
$\cX_\alpha$. 

Finally, given $f,g\in\cX_\alpha$, by~\eqref{norm-id-Xalpha} we have that
\begin{align*}
\| f+g\|_{\cX_\alpha} & =\big\|
 (-1)^{(\cdot)} \big( f(\cdot+\alpha) +g(\cdot+\alpha) \big) \big\|_{\BMO(\bbZ)}\\
& \le \big\|
 (-1)^{(\cdot)} \big( f(\cdot+\alpha) \big) \big\|_{\BMO(\bbZ)} + \big\|
 (-1)^{(\cdot)} g(\cdot+\alpha) \big)
\big\|_{\BMO(\bbZ)}\\
& =  \| f\|_{\cX_\alpha} +\| g\|_{\cX_\alpha} .
\end{align*}
This proves the Proposition. \qed
\ms

Recall that, see e.g.~\cite{Coifman-Weiss}, if $a\in \BMO(\bbZ)$, then
\begin{equation}\label{BMO-Z-estimate}
\sum_{n\in\bbZ} \frac{|a_n|}{1+n^2}<\infty.
\end{equation}  

\proof[Proof of Theorem~\ref{main-thm1}]

{\em Step 1.} The mapping $T_\alpha$ is well defined on $ \BMO(\bbZ)$
and if $a\in \BMO(\bbZ)$, $T_\alpha a$ is an entire function.
 \ms

Let $\mathbf{1}=(c_n)_{n\in\bbZ}$ be the constant sequence with $c_n=1$ for all
$n\in\bbZ$.  Then, we have
\begin{align*}
e^{i\pi\alpha}  T_\alpha \mathbf{1}(z)
  & = \sum_{n\in\bbZ\setminus\{0\}} (-1)^n  \Big(
  \frac{1}{z-\alpha-n} +\frac{1}{n}\Big)
  \frac{\sin[\pi(z-\alpha-n)]}{\pi} +   \sinc(z-\alpha)\\
  & = e^{i\pi\alpha} \cos(\pi(z-\alpha)),
\end{align*}
by a standard identity, see e.g.~\cite[Example 2.4, Ch. XIII]{Lang}. Hence, $ T_\alpha
\mathbf{1}=0$ in $\cX_\alpha$, that is, $T_\alpha$ is well defined on
$\BMO(\bbZ)$. 

Let now $a=(a_n)\in \BMO(\bbZ)$, and choose the representative sequence such
that $a_0=0$. Fix $R>1$ and the compact set $\ov{D(0,R)}$.  Then,
\begin{align*}
 e^{i\pi\alpha} T_\alpha a(z)
  & =   \sum_{n\in\bbZ\setminus\{0\}}  a_n \Big(
  \frac{1}{z-\alpha-n} +\frac{1}{n}\Big)
  \frac{\sin[\pi(z-\alpha)]}{\pi} \\
&    =\Big(\sum_{0<|n|\le 2R}+ \sum_{|n|> 2R}\Big) a_n 
  \frac{z-\alpha}{(z-\alpha-n)n} 
  \frac{\sin[\pi(z-\alpha)]}{\pi} \\
  & =: S_1(z)+ S_2(z).
\end{align*}

For $z\in\ov{D(0,R)}$ and $|n|>2R$ we observe that
\begin{align*}
\Big| a_n 
  \frac{z-\alpha}{(z-\alpha-n)n} 
  \frac{\sin[\pi(z-\alpha)]}{\pi} \Big| & \le \sup_{|z|\le 2R}  \frac1\pi\big|(z-\alpha)\sin[\pi(z-\alpha)]\big|
\frac{|a_n|}{|n|(|n|-R-1)|}\\ 
& \le C_R \frac{|a_n|}{|n|(|n|+1)},
\end{align*}  
so that the sum defining $S_2$ converges uniformly on
$\ov{D(0,R)}$. On the other hand,
if $z\in\ov{D(0,R)}$ and $|n|\le2R$ we have that
\begin{align*}
\Big| a_n 
  \frac{z-\alpha}{(z-\alpha-n)n} 
  \frac{\sin[\pi(z-\alpha)]}{\pi} \Big| & = |z-\alpha||\sinc(z-\alpha-n)|\frac{|a_n|}{|n|}\\
& \le C'_R\frac{|a_n|}{|n|}
\end{align*}  
and since $S_1$ is given by a finite sum, the convergence is uniform.
Thus, $T_\alpha a$ defines an entire function.
\ms

{\em Step 2.} For $a\in \BMO(\bbZ)$, $T_\alpha a\in \cE_\pi$. \ms

We slipt the sum defining $T_\alpha a$ as follows:
\begin{align*}
 e^{i\pi\alpha} T_\alpha a(z)
&    =\Big(\sum_{0<|n|\le 2|z|}+ \sum_{|n|> 2|z|}\Big) a_n 
  \frac{z-\alpha}{(z-\alpha-n)n} 
  \frac{\sin[\pi(z-\alpha)]}{\pi} \\
  & =: \widetilde S_1(z)+ \widetilde S_2(z).
\end{align*}
Using~\eqref{BMO-Z-estimate} we see that
\begin{align*}
|\widetilde S_2(z)|
& \le \sum_{|n|>2|z|} \frac1\pi\big|(z-\alpha)\sin[\pi(z-\alpha)]\big|
\frac{|a_n|}{|n|(|n|-R-1)|}\\
& \le C_R \frac1\pi\big|(z-\alpha)\sin[\pi(z-\alpha)]\big|\sum_{|n|>2|z|} \frac{|a_n|}{|n|(|n|+1)|}\\
  & \le C'_R \frac1\pi\big|(z-\alpha)\sin[\pi(z-\alpha)]\big|.
\end{align*}
Hence, $\widetilde S_2\in \cE_\pi$. Next, writing $z=x+iy$,
\begin{align*}
|\widetilde S_1(z)|
  & \le  \sum_{0<|n|\le 2|z|} |z-\alpha||\sinc(z-\alpha-n)|\frac{|a_n|}{|n|}
   \\
  & \le e^{\pi|y|} |z-\alpha| \sum_{0<|n|\le 2|z|} 2|z|\frac{|a_n|}{|n|^2}. 
\end{align*}
This shows that $T_\alpha a\in\cE_\pi$. \ms

{\em Step 3.} We show that for $a\in \BMO(\bbZ)$, $T_\alpha a(iy) =o(|y|e^{\pi|y|})$.
\ms

We have
\begin{align*}
 \lim_{|y|\to\infty} \frac{|T_\alpha a(iy)|}{|y|e^{\pi|y|}}
& = \lim_{|y|\to\infty} \frac{|iy -\alpha||\sinc(z-\alpha-n)|}{|y|e^{\pi|y|}}\Big| 
\sum_{n\neq0} \frac{a_n}{(iy-\alpha-n)n}\Big| ,
\end{align*}
so it suffices to show that
$$
\lim_{|y|\to\infty} \sum_{n\neq0} \frac{a_n}{(iy-\alpha-n)n} =0.
$$
This follows at once from the dominated convergence theorem, whence
$T_\alpha a(iy) =o(|y|e^{\pi|y|})$.\ms

{\em Step 4.} We now show that $T_\alpha a \in \cX_\alpha$.
\ms

When $k\neq0$ we have that
\begin{align}
& e^{i\pi(k+\alpha)}T_\alpha a(k+\alpha)\notag \\
  & = (-1)^{k}\bigg( \sum_{n\in\bbZ\setminus\{0,k\}} (-1)^n a_n \Big(
  \frac{1}{k-n} +\frac{1}{n}\Big)
  \frac{\sin[\pi(k-n)]}{\pi} +(-1)^k a_k+ (-1)^{k}a_0 \sinc(k) \bigg) \notag\\
  & =   a_k. \label{Talpha-iden}
\end{align}
The case $k=0$ is immediate, and therefore
$$
\big( e^{i\pi(\cdot+\alpha)}T_\alpha a(\cdot+\alpha)\big)_{|_\bbZ} = a \in \BMO(\bbZ).
$$
This fact, together with the conclusions of {\em Steps 1-3} imply that
 $T_\alpha\in\cX_\alpha$,
 as we wished to show. \ms

{\em Step 5.}  The operator $T_\alpha$ is an isomorphism. \ms

Suppose $T_\alpha a=0$, that is, $T_\alpha a(z)=c_1 \cos(\pi(
z-\alpha))+c_2\sin(\pi (z-\alpha))$, for some  $c_1,c_2\in\bbC$. In {\em Step 1} we have observed
that $T(c_1{\mathbf 1})=c_1 \cos(\pi (z-\alpha))$.  Since, in $\BMO(\bbZ)$,
$a=a+c_1{\mathbf 1}$, we have that $T_\alpha a(z)=c_2\sin(\pi
(z-\alpha))$. By~\eqref{Talpha-iden} it then follows that
$$
a_k = e^{i\pi(k+\alpha)}T_\alpha a(k+\alpha) = 0.
$$
Hence, $a_k=0$ for all $k\in\bbZ$ so that the equality holds with
$c_2=0$ and
$T_\alpha$ is injective.

Next, let $f\in\cX_\alpha$.  Then
$a:=\big((-1)^{(\cdot)}f(\cdot+\alpha)\big)\in\BMO(\bbZ)$. Therefore,
$f-T_\alpha a\in\cX_\alpha$ and
$
f(n+\alpha)-T_\alpha a(n)=0$ for every $n\in\bbZ$.  By
Lemma~\ref{exp-growth:lem}, there exists $c\in\bbC$ such that
$f=T_\alpha a+c\sin\pi(\cdot)$, that is, $T_\alpha a=f$ in
$\cX_\alpha$, i.e. $T_\alpha$ is also onto. \ms

{\em Step 6.} We now wish to show that each of the spaces $\cX_\alpha$,
$\alpha\in[0,1)$ can be identified with the dual space
$(\cB^1_\pi)^*$.\ms

From~\cite{Eoff} we know that $\widetilde
T_\alpha:H^1(\bbZ)\to\cB^1_\pi$ is an isomorphism, where
$$
\widetilde T_\alpha a(z)= \sum_{n\in\bbZ} a_n (-1)^n \sinc(z-\alpha-n).
$$
where $\widetilde T_\alpha^{-1}$ is given by
$$
\widetilde T_\alpha^{-1}f =\big( e^{i\pi\alpha} (-1)^{(\cdot)}f(\cdot+\alpha) \big)_{|_\bbZ}.
$$
Then, $(\widetilde T_\alpha^{-1})^*:\BMO(\bbZ)\to(\cB^1_\pi)^*$ is an
isomorphism, where, if $f\in \cB^1_\pi$ and $b\in\BMO(\bbZ)$, the
pairing of duality is given by
\begin{align*}
(\widetilde T_\alpha^{-1})^* b (f) & = \la \widetilde T_\alpha^{-1}f ,b \ra
= \la b, \big( e^{i\pi\alpha} (-1)^{(\cdot)}f(\cdot+\alpha) \big)_{|_\bbZ}\ra\\
 & = \la \big(  e^{i\pi\alpha} (-1)^{(\cdot)}T_\alpha b(\cdot+\alpha)\big)_{|_\bbZ},\,
\big( e^{i\pi\alpha} (-1)^{(\cdot)}f(\cdot+\alpha) \big)_{|_\bbZ}\ra\\
& = \la e^{2i\alpha \pi} \big( T_\alpha b(\cdot+\alpha)\big)_{|_\bbZ},\,
\big( f(\cdot+\alpha) \big)_{|_\bbZ}\ra
\end{align*}
This proves the theorem.
\qed

\begin{cor}\label{Xalpha-iso:cor}
 For each $\alpha\in(0,1)$, the space
   $\cX_\alpha$ coincides with $\cX_0$, with equivalence of norms.  Hence, 
the spaces $\cX_\alpha$ are all equal. 
\end{cor}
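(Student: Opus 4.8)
The plan is to combine the explicit formula~\eqref{Talpha:eqq} for $T_\alpha$ with the isomorphism $T_\alpha\colon\BMO(\bbZ)\to\cX_\alpha$ of Theorem~\ref{main-thm1}, reading off the values of $T_\alpha a$ on $\bbZ$ (resp.\ of $T_0 b$ on $\bbZ+\alpha$) and recognising them as translated discrete Hilbert transforms of the defining sequences. Fix $\alpha\in(0,1)$ and let $f\in\cX_\alpha$. By Theorem~\ref{main-thm1} we may take $f=T_\alpha a$ for some $a\in\BMO(\bbZ)$ with $\|a\|_{\BMO(\bbZ)}\approx\|f\|_{\cX_\alpha}$; by~\eqref{Talpha-iden}, $a_n=(-1)^ne^{i\pi\alpha}f(n+\alpha)$. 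To test membership in $\cX_0$ we evaluate $f$ on $\bbZ$: substituting $\sin[\pi(n-\alpha-k)]=(-1)^{n-k}\sin(-\pi\alpha)$ into~\eqref{Talpha:eqq}, the signs $(-1)^k$ all collapse and one gets
$$
(-1)^n f(n)=-\frac{e^{-i\pi\alpha}\sin(\pi\alpha)}{\pi}\Big(\sum_{k\neq0}a_k\Big(\frac{1}{n-k-\alpha}+\frac1k\Big)+\frac{a_0}{n-\alpha}\Big).
$$

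The sequence in parentheses is, modulo a convergent additive constant and a harmless reindexing of the $k=0$ term, the $(-\alpha)$-translated discrete Hilbert transform $H_{d,-\alpha}a$ from~\eqref{Hd:def}, regularised by subtracting its value at $n=0$. Because $\alpha\in(0,1)$, the kernel $1/(n-k-\alpha)$ has no singularity and is a Calder\'on--Zygmund (indeed convolution) kernel on $\bbZ$, so the corresponding operator is bounded from $\BMO(\bbZ)$ into itself, the additive constant being irrelevant for the $\BMO(\bbZ)$ seminorm (see~\cite{Coifman-Weiss} for the general framework). Hence $\big((-1)^nf(n)\big)_{n\in\bbZ}\in\BMO(\bbZ)$ with seminorm $\le C_\alpha\|a\|_{\BMO(\bbZ)}\approx\|f\|_{\cX_\alpha}$. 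Since $f=T_\alpha a\in\cE_\pi$ also satisfies condition $(Y)$ of Lemma~\ref{exp-growth:lem} (this was established in Steps~2--3 of the proof of Theorem~\ref{main-thm1}), and using~\eqref{norm-id-Xalpha} for $\cX_0$, we conclude $f\in\cX_0$ and $\|f\|_{\cX_0}\le C_\alpha\|f\|_{\cX_\alpha}$.

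For the reverse inclusion I would carry out the symmetric computation. Given $g\in\cX_0$, take $g=T_0 b$ with $b=\big((-1)^ng(n)\big)_{n\in\bbZ}\in\BMO(\bbZ)$ and $\|b\|_{\BMO(\bbZ)}\approx\|g\|_{\cX_0}$ (again by Theorem~\ref{main-thm1} and~\eqref{Talpha-iden}), and evaluate $g$ on $\bbZ+\alpha$. Now $\sin[\pi(n+\alpha-k)]=(-1)^{n-k}\sin(\pi\alpha)$, and the same collapse of signs gives
$$
(-1)^n g(n+\alpha)=\frac{\sin(\pi\alpha)}{\pi}\Big(\sum_{k\neq0}b_k\Big(\frac{1}{n-k+\alpha}+\frac1k\Big)+\frac{b_0}{n+\alpha}\Big),
$$
which is, up to a constant, the $\alpha$-translated regularised discrete Hilbert transform $H_{d,\alpha}b$; thus $\big(e^{i\pi(n+\alpha)}g(n+\alpha)\big)_{n\in\bbZ}\in\BMO(\bbZ)$ with seminorm $\le C_\alpha\|b\|_{\BMO(\bbZ)}\approx\|g\|_{\cX_0}$. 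As $g\in\cE_\pi$ satisfies $(Y)$ as well, $g\in\cX_\alpha$ and $\|g\|_{\cX_\alpha}\le C_\alpha\|g\|_{\cX_0}$. Combining the two inclusions, $\cX_\alpha=\cX_0$ with equivalence of norms; since $\alpha\in(0,1)$ was arbitrary (and the case $\alpha=0$ is trivial), all the spaces $\cX_\alpha$, $\alpha\in[0,1)$, coincide.

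The delicate point is the identification of the off-lattice sequences $\big(\sum_{k\neq0}a_k(\tfrac{1}{n-k-\alpha}+\tfrac1k)+\tfrac{a_0}{n-\alpha}\big)_{n}$ with a regularised translated discrete Hilbert transform of $a$ together with the invocation of its $\BMO(\bbZ)$-boundedness: one has to keep track of the several convergent constant corrections (the differences $\tfrac1k-\tfrac1{k+\alpha}=O(k^{-2})$ and the isolated $k=0$ term, which require~\eqref{BMO-Z-estimate} for summability), and verify that the ``constants'' ambiguity on the sequence side matches, through~\eqref{norm-id-Xalpha}, the quotient by $\operatorname{span}\{e^{\pm i\pi(\cdot)}\}$ defining $\cX_0$ and $\cX_\alpha$ on the function side.
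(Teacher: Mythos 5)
Your argument is correct and rests on the same engine as the paper's proof: evaluate the cardinal-series representation $T_\beta$ of a given element on the \emph{other} lattice ($\bbZ$ versus $\bbZ+\alpha$), use $\sin[\pi(m+\beta-k)]=\pm(-1)^{m-k}\sin(\pi\beta)$ to collapse the signs, and recognize the resulting sequence as a (regularized) translated discrete Hilbert transform, whose boundedness on $\BMO(\bbZ)$ gives the embedding with norm control. The organizational difference is in the reverse inclusion: the paper proves only $\cX_0\hookrightarrow\cX_\alpha$ directly (your second computation, via $H_{d,\alpha}$) and then obtains surjectivity and norm equivalence from the invertibility of $H_{d,\alpha}$ on $\BMO(\bbZ)$ — using the identity $H_{d,1-\alpha}\circ H_{d,\alpha}=-S$ with $S$ the shift — together with the open mapping theorem; you instead run the symmetric kernel computation a second time (your kernel $1/(n-k-\alpha)$ is $H_{d,1-\alpha}$ composed with a shift, so it is covered by the same boundedness assertion). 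Your route buys a self-contained two-sided estimate without invoking the composition identity or the open mapping theorem, at the cost of a second computation; both routes rely on the same unproved-but-standard fact that the translated discrete Hilbert transforms are bounded on $\BMO(\bbZ)$, which the paper also simply asserts. One small caution: your phrase ``modulo a convergent additive constant'' is slightly misleading, since for a general $\BMO(\bbZ)$ sequence the unregularized sum $\sum_{k\neq0}a_k/k$ need not converge; the correct statement is that the expression with the $+1/k$ correction \emph{is} the canonical regularized definition of the translated transform on $\BMO(\bbZ)$ modulo constants (its termwise size $O_n(k^{-2})$ and \eqref{BMO-Z-estimate} give absolute convergence), and constants are invisible both in the $\BMO(\bbZ)$ seminorm and in the quotient by $\operatorname{span}\{e^{\pm i\pi(\cdot)}\}$ — which is exactly the level of precision at which the paper itself operates.
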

Let $f\in\cX_0$. In {\em Step 6} we have shown that $T_0 a=f$,
where $a:=\big((-1)^{(\cdot)}f(\cdot)\big)\in\BMO(\bbZ)$, so that
\begin{align*}
  (-1)^k f(k+\alpha) & = (-1)^k
\bigg( \sum_{n\neq0} (-1)^n a_n \Big(
   \frac{1}{k+\alpha-n}+\frac1n  \Big) \frac{\sin(\pi(k+\alpha-n))}{\pi}
 + a_0 \frac{\sin(\pi(k+\alpha))}{\pi(k+\alpha)} \bigg)\\
  & =   (-1)^k \frac{\sin(\pi(k+\alpha))}{\pi} \sum_{n\neq0}  a_n \Big(
   \frac{1}{k+\alpha-n}+\frac1n  \Big) 
 + (-1)^ka_0 \sinc(k+\alpha)  \\
& = \frac{e^{i\pi\alpha}}{\pi} \big(H_{d,\alpha} a\big)_k
 + (-1)^ka_0 \sinc(k+\alpha) .                       
\end{align*}
Therefore, possibly replacing $f$ by $f-a_0\sinc(\cdot)$, or assuming
that $a_0=0$ as we also may, we obtain that
$$
R_\alpha f:= e^{-i\pi\alpha}\big( (-1)^{(\cdot)}
f(\cdot+\alpha)\big)= \frac1\pi H_{d,\alpha} R_0f \in\BMO(\bbZ).
$$
Since $H_{d,\alpha}:\BMO(\bbZ)\to\BMO(\bbZ)$ is an isomorphism, 
it follows that
$\big((-1)^{(\cdot)}f(\cdot+\alpha)\big)\in\BMO(\bbZ)$, and
\begin{align*}
  \|f\|_{\cX_\alpha}
  & \le  \big\| e^{-i\pi\alpha}\big( (-1)^{(\cdot)}
    f(\cdot+\alpha)\big)\big\|_{\BMO(\bbZ)}\\
  & \le C_\alpha \|R_0f\|_{\BMO(\bbZ)} \\
  & \le C_\alpha \|f\|_{\cX_0}.
\end{align*}
This shows that 
$\cX_0$  continuously embeds into $\cX_\alpha$.

Since $H_{d,1-\alpha}\circ H_{d,\alpha} =-S$, where
$
S(a)(\cdot):=a(\cdot+1),
$
and $S$ is an isomorphism in $\BMO(\bbZ)$,
 from the preceeding formula it follows that 
$R_\alpha$ is one-to-one and $R_\alpha^{-1}=-{R_0}^{-1}S^{-1}H_{d,1-\alpha}$. Then, the
embedding is also onto.

The
equivalence of the norms now follows from the open mapping theorem.
\ms \qed

\proof[Proof of Theorem~\ref{Y=Xalpha:thm}]
Let $f\in\BMO(e^{-i\pi z})$, that we can identify with $(\cB^1_\pi)^*$.  Then, the
Hankel operator with symbol $f$,
$H_f: \cB^2_{\pi/2}\to
  \cB^2_{\pi/2}$ is bounded.  This in turn is equivalent to saying
  that the Toeplitz operator $T_f: \cB^2_{\pi/2}\to
  \cB^2_{\pi/2}$ is bounded.  By~\cite{Carlsson} it follows that
  $\big((-1)^nf(n)\big) \in\BMO(\bbZ)$.  In order to show that $f$
  satisfies condition $(Y)$, we use Theorem~\ref{main-thm2} and
  using~\eqref{3rd-ob} we write $f=P_\pi
  \vp$, with $\vp\in L^\infty$, so that if $R>0$ is given, $z\in\bbC$ such that $|\Re
  z|\le R$, then
\begin{align*}
  |f(iy)| &
           \le \int_{I^*} |\vp(t)| \frac{2e^{\pi|y|}}{|y|} \,dt +C \int_{(I^*)^c}
           |\vp(t)| \frac{e^{\pi|y|}|y|}{|t|(|t|+|y|)}  \,dt 
\end{align*}
Using the dominated convergence theorem one sees at once that $f$
satisfies condition 
$(Y)$.  Therefore, $f\in\BMO(e^{-i\pi z})$ belongs to $\cX_0$, and $\BMO(e^{-i\pi z})$
continuously embeds in $\cX_0$. 
  \qed

\ms

\section{Compactness, $\VMO$ and preduality}\label{VMO:sec}

\proof[Proof of Theorem~\ref{main-thmVMO}]
(1) Let $\eta\in C_0(\bbR)$ then by \eqref{1st-ob} and \eqref{2nd-ob} we have
\begin{align*}
P_\pi\eta
& =  e^{-i\pi(\cdot)} P_+ (e^{i\pi(\cdot)}\eta) -  e^{i\pi(\cdot)} P_+ (e^{-i\pi(\cdot)}\eta) \\
 & = e^{i\pi(\cdot)} P_- (e^{-i\pi(\cdot)}\eta) -  e^{-i\pi(\cdot)} P_- (e^{i\pi(\cdot)}\eta) .
\end{align*}
Since $P_\pm$ map $C_0(\bbR)$ onto $\VMOA^\pm$, resp., we immediately
see that $P_\pi \eta\in\VMO(e^{-i\pi z})$.    In order to show that
$P_\pi(C_0(\bbR))=\VMO(e^{-i\pi z})$, 
 let $f\in\cY_0$.  Then, $e^{\pm i\pi(\cdot)}f_0\in
\VMOA^\pm$, resp., so that there exist $\eta_\pm\in C_0(\bbR)$ such that
\begin{equation}\label{as-in-eq:compactness}
e^{ i\pi(\cdot)}f_0 = P_+ \eta_+,\quad\text{and}\quad  e^{- i\pi(\cdot)}f_0 = P_- \eta_-,.
\end{equation}

Arguing as in the case of identity \eqref{as-in-eq}, we have that
$$
f_0  = 
  P_\pi (e^{-i\pi(\cdot)}\eta_+) =
  P_\pi (e^{i\pi(\cdot)}\eta_-);
  $$
  hence $P_\pi(C_0(\bbR))=\VMO(e^{-i\pi z})$.
  The stated about the equivalence of norms follows now from Theorem~\ref{main-thm1bis}.
  \ms
  
  (2)  A function $h\in\cB^1_\pi$ clearly defines an
  element $L_h$ of ${\VMO(e^{-i\pi z})}^*$ with the same pairing between $\cB^1_\pi$
  and $\BMO(e^{-i\pi z})$, as in formula \eqref{pairing-duality}.
  
  Next, let $L\in\cY_0^*$ and consider the bounded linear functional
  $\Phi$  on $C_0(\bbR)$,
  $\Phi = L\circ P_\pi  $.  Then, by the Riesz representation theorem
  there exists a unique finite regular Borel measure $\mu$ on $\bbR$ that
  represents the functional, that is,
  $$
L(P_\pi\eta)= \int_\bbR \eta\, d\mu.
  $$
  We wish to show that
$\mu$ has a density, which coincides with the restriction of a
function $\cB^1_\pi$, that is, 
  $\mu=h_0dm$, with $h\in\cB^1_\pi$. 
  Observe that if $\supp(\widehat\eta)$ does not intersect $[-\pi,\pi]$
  so that
  $P_\pi\eta=0$, then $\int_\bbR \eta\, d\mu=0$.  Hence, the
  distribution $\widehat\mu$ is
  supported on $[-\pi,\pi]$.
  By the classical Paley--Wiener--Schwartz theorem, there exists
a function $h$ in $\cE_\pi$ such that $\mu=h_0$, as distributions. In
particular, for all Schwartz functions 
$\widetilde\eta$, $\la \widetilde\eta,\mu\ra =\int_\bbR
\widetilde\eta h_0 dm$.
  Approximating $\eta\in C_0(\bbR)$ with Schwartz functions
  $\eta^{(\delta)}$ and
  passing to the limit we have
\begin{align*}  
\int_\bbR \eta\, d\mu
  & = \lim_{\delta\to0} \int_\bbR \eta^{(\delta)} \, d\mu
    = \lim_{\delta\to0}  \int_\bbR
\eta^{(\delta)} (x) h_0(x) \,dx =\int_\bbR
\eta (x) h_0(x)\, dx.
\end{align*}
Hence, $d\mu=h_0dm$ and since $\mu$ has bounded total variation,
$h_0\in L^1(\bbR)$, that is, $h\in \cB^1_\pi$.  Since $P_\pi h=h$,
using part (1) and a
simple approximation argument, we see that
$$
L(\vp)=L(P_\pi\eta)=  \int_\bbR
\eta (x) h_0(x)\, dx = \int_\bbR P_\pi\eta (x) h_0(x)\, dx, 
$$
that is, $L(\vp) = L_h(\vp)$, for all $\vp\in\cY_0$, as we wished to
show.

By part (1), there exists $C_1>0$ such that for every $\vp\in\cY_0$,
there exists $\eta_\vp\in C_0(\bbR)$ such that $P_\pi \eta_\vp=\vp$
and $\|\eta_\vp\|_{L^\infty}\le C_1 \|\vp\|_{\VMO(e^{-i\pi z})}$.  Then, for
$h\in\cB^1_\pi$,
\begin{align*}
  \|L_h\|_{{\VMO(e^{-i\pi z})}^*}
  & =\sup_{\|\vp\|_{\VMO(e^{-i\pi z})}\le1} |L_h(\vp)| \\
  &=\sup_{\|\vp\|_{\VMO(e^{-i\pi z})}\le1} |\int_\bbR h_0\eta_\vp \, dm| \le  \sup_{\|\vp\|_{\VMO(e^{-i\pi z})}\le1} 
    C_1\|h\|_{\cB^1_\pi} \|\vp\|_{\VMO(e^{-i\pi z})},
\end{align*}
whence $\|L_h\|_{{\VMO(e^{-i\pi z})}^*}\le  C_1\|h\|_{\cB^1_\pi} $.  On the other
hand, if $h\in\cB^1_\pi$, 
\begin{align*}
  \|h\|_{\cB^1_\pi}
  & =\sup_{\eta\in C_0(\bbR),\, \|\eta\|_{L^\infty} \le1} |\int_\bbR
    \eta(x) h_0(x)\, dx| =
   \sup_{\eta\in C_0(\bbR),\, \|\eta\|_{L^\infty} \le1}
    |L_h(P_\pi\eta)|\\
  & \le
\sup_{\eta\in C_0(\bbR),\, \|\eta\|_{L^\infty} \le1}  \|L_h\|_{{\VMO(e^{-i\pi z})}^*} \|P_\pi\eta\|_{\VMO(e^{-i\pi z})}
\le C_2  \|L_h\|_{{\VMO(e^{-i\pi z})}^*} .
\end{align*}
This completes the proof.
\qed\ms 

We point out that in the course of the proof, we used  the following analouge of
the theorem of F. and M. Riesz.

\begin{prop}\label{Riesz's-thm}
Let $\mu$ a finite Borel measure on $\bbR$ such that
$\supp(\widehat\mu)\subseteq[-\pi,\pi]$. Then, there exists
$h\in\cB^1_\pi$ such that $d\mu= h_0dm$. In particular, $\mu$ is absolutely
continuous w.r.t. the Lebesgue measure.
  \ms
\end{prop}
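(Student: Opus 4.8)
The plan is to use the compact-spectrum hypothesis, which makes the statement a straightforward consequence of the Paley--Wiener and Paley--Wiener--Schwartz theorems rather than of the (much deeper) F.\ and M.\ Riesz theorem, where the spectrum is only required to omit a half-line.

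First I would note that, $\mu$ being a finite Borel measure, its Fourier transform
$$
\widehat\mu(\xi) = \frac{1}{\sqrt{2\pi}}\int_\bbR e^{-ix\xi}\, d\mu(x)
$$
is a bounded continuous function on $\bbR$, with $\|\widehat\mu\|_{L^\infty}\le\frac{1}{\sqrt{2\pi}}|\mu|(\bbR)$, and that, by a routine application of Fubini, this function coincides with the tempered distribution $\widehat\mu$. The hypothesis then says that $\widehat\mu\in L^2(\bbR)$ (being bounded and compactly supported) with $\supp(\widehat\mu)\subseteq[-\pi,\pi]$. By the Paley--Wiener characterization of $\cB^2_\pi$ recalled in the Introduction, $g:=\cF^{-1}(\widehat\mu)\in\cB^2_\pi$; in particular $g$ is the restriction to $\bbR$ of a function $h\in\cE_\pi$. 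By Fourier inversion in $\cS'$, the distribution $g$ equals $\mu$, i.e.\ $\langle\mu,\psi\rangle=\int_\bbR g\,\psi\, dm$ for every $\psi\in\cS$.

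Next I would promote this to an identity of measures. Since $g\in L^2(\bbR)\subseteq L^1_{\mathrm{loc}}(\bbR)$, the set function $E\mapsto\int_E g\, dm$ is a complex Radon measure agreeing with $\mu$ on $C_c^\infty(\bbR)$, hence on $C_c(\bbR)$ by density, hence $d\mu=g\, dm$ by the Riesz representation theorem. Therefore the total variation of $\mu$ equals $\int_\bbR|g|\, dm$, so $\int_\bbR|g|\, dm=|\mu|(\bbR)<\infty$ and $g\in L^1(\bbR)$. Taking $h$ to be the entire extension of $g$, we get $h\in\cE_\pi$ and $h_0=g\in L^1(\bbR)$, that is, $h\in\cB^1_\pi$, with $d\mu=h_0\, dm$; absolute continuity follows at once. (One could equally invoke Paley--Wiener--Schwartz directly: $\widehat\mu$ is compactly supported, so $\mu$ extends to some $h\in\cE_\pi$, and then the total-variation argument gives $h_0\in L^1(\bbR)$.)

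Since the hypothesis provides a compact spectrum, there is really no hard step here: the only point to treat with a little care is the passage from ``$\mu=g$ as distributions'' to ``$d\mu=g\, dm$ as measures'', which rests on the facts that a Radon measure is determined by its pairing with $C_c(\bbR)$ and that an $L^1_{\mathrm{loc}}$ function induces a Radon measure. All other ingredients are already available in the excerpt (the Paley--Wiener description of $\cB^2_\pi$ and the Paley--Wiener--Schwartz theorem), so I expect no genuine obstacle.
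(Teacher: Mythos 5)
Your proof is correct and follows essentially the same route as the paper's own argument (given inside the proof of Theorem~\ref{main-thmVMO}): there, the Paley--Wiener--Schwartz theorem is applied to the compactly supported distribution $\widehat\mu$ to identify $\mu$ with $h_0$ for some $h\in\cE_\pi$, and finiteness of the total variation then gives $h_0\in L^1(\bbR)$, which is exactly your parenthetical alternative. Your main variant --- noting that $\widehat\mu$ is a bounded continuous function supported in $[-\pi,\pi]$, hence in $L^2$, so the classical Paley--Wiener theorem already provides the entire extension via $\cB^2_\pi$ --- is a mild simplification, and your explicit justification of the passage from equality of tempered distributions to equality of measures makes precise a step the paper leaves implicit.
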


\proof[Proof of Theorem~\ref{main-thm3}]
Let $\vp\in\cY_0$, $\vp=P_\pi \eta$ with 
$\eta\in C_0(\bbR)$, be given.
 Let $\{f_n\}$ be a sequence in $\cB^2_{\pi/2}$ weakly
converging to $0$.  It is well known, and easy to see, that this is
equivalent to the conditions
$\|f_n\|_{\cB^2_{\pi/2}}\le C$, with $C$ independent of $n$ and $f_n\to
0$ uniformly on compact subsets\footnote{ We provide the details in the subsequent Lemma \ref{da-togliere}. }.
Given $\eps>0$, let $E\subseteq\bbR$ compact  and $n_\eps$
positive integer
be such that
$$
\sup_{x\in E^c} |\eta(x)|<\eps,\quad\text{and}\quad \sup_{x\in E, n\ge
n_\eps} |f_n(x)|<\eps/|E|^{1/2}.
$$
Notice that, since $f_ng\in \cB^1_\pi$,
\begin{align*}
|\la H_\vp f_n \,|\, g\ra |
& = |\la \vp  \,|\, f_ng\ra| = |\la \eta \,|\,  f_ng\ra| \le \bigg(\int_E +\int_{E^c}\bigg)
|\eta(x)f_n(x)g(x)|\, dx \\
& \le \|\eta\|_{L^\infty} \eps  \|g\|_{\cB^2_{\pi/2}}  + C \eps \|g\|_{\cB^2_{\pi/2}} \\
& \le C' \eps\|g\|_{\cB^2_{\pi/2}}.
\end{align*}  
Then, for $n\ge n_\eps$ we have
\begin{align*}
\| H_\vp f_n\|_{\cB^2_{\pi/2}}
& = \sup_{g\in\cB^2_{\pi/2},\|g\|_{\cB^2_{\pi/2}}\le1} \big|\la H_\vp f_n \,|\,  g\ra \big|
\le C' \eps  .
\end{align*}
Hence, $(1)$ implies $(2)$. \ms

Next, let
$$
\cH:= \big\{ \psi\in\cS':\, H_\psi \ \text{is a compact Hankel
  operators on} \ \cB^2_{\pi/2} \big\}.
$$
The first part of the proof and the norm equivalence statement
in Theorem~\ref{main-thm2} show that the mapping
$J:\VMO(e^{-i\pi z})\ni\vp\mapsto H_\vp\in\cH$ is an isomorphism onto its
image. Thus, $J(\VMO(e^{-i\pi z}))$ is a closed subspace of $\cH$.  If we show
that $\cH$ and $J(\VMO(e^{-i\pi z}))$ have the same dual, it would follow that
they are actually equal.  We use the identification of
${\VMO(e^{-i\pi z})}^*$ with $\cB^1_\pi$.

If $f\in\cB^1_\pi$, then define a linear
functional on $\cH$ by setting
$$
\Lambda_f(H_\vp) :=  \la \vp \,|\,f\ra  .
$$
Note that $\Lambda_f$ is well defined since, if $f=\sum_{j=1}^4
g_jh_j$, with $g_j,h_j\in \cB^2_{\pi/2}$ and
$\sum_{j=1}^4\|g_j\|_{\cB^2_{\pi/2}}\|h_j\|_{\cB^2_{\pi/2}}\le
C_1\|f\|_{\cB^1_\pi}$, then 
\begin{align*}
|\Lambda_f(H_\vp) |
& = | \sum_{j=1}^4 \la \vp \ov{g_j}\,|\, h_j \ra| = |\sum_{j=1}^4\la H_\vp g_j\,|\, h_j \ra| \\
& \le \|H_\vp\|_{\cB^2_{\pi/2}\to \cB^2_{\pi/2}}
\sum_{j=1}^4\|g_j\|_{\cB^2_{\pi/2}}\|h_j\|_{\cB^2_{\pi/2}}\\
  & \le
C_1 \|H_\vp\|_{\cB^2_{\pi/2}\to \cB^2_{\pi/2}} \|f\|_{\cB^1_\pi}.
\end{align*}
Therefore,
$
\|\Lambda_f\|_{\cH^*} \le C_1 \|f\|_{\cB^1_\pi}$.  Conversely, let
$\Lambda\in\cH^*$. By Hahn--Banach, we can extend $\Lambda$ to a bounded linear
functional $\widetilde \Lambda$ on the space of
compact operators on $\cB^2_{\pi/2}$, having the same norm.  Hence,
there exists a trace class operator $T:= \sum_{k=1}^\infty \lambda_k\la \cdot
\,|\, u_k\ra v_k$ on $\cB^2_{\pi/2}$ that represents $\widetilde \Lambda$,
where $\{u_k\},\{v_k\}$ are orthonornal bases of $\cB^2_{\pi/2}$, and
$(\lambda_k)\in\ell^1$. 
Notice that $f:=\sum_{k=1}^\infty \lambda_k u_kv_k\in\cB^1_\pi$, and that
\begin{align*}
\Lambda(H_\vp) & = \widetilde \Lambda(H_\vp)
 : = \operatorname{tr} \big(T^*H_\vp\big) =    \sum_{j=1}^\infty \la
 H_\vp u_j
\,|\, Tu_j\ra \\
& = \sum_{k=1}^\infty \lambda_j \la H_\vp u_j \,|\, v_j\ra = \la
                   \vp\,|\, f\ra.
\end{align*}
Hence, $\Lambda=\Lambda_f$ and the norm equivalence follows from the
open mapping theorem. \qed \ms 

 In the previous theorem, we have used an equivalent condition which guarantees that the sequence $\{f_n\}$ weakly
 converges to $0$. For the reader's convenience, we include the proof of this statement, taking $\kappa=\pi$ (instead of $\kappa=\pi/2$).
\begin{lem}\label{da-togliere}
Let $\{f_n\}$ be a sequence in $\cB^2_\pi$ Then, the following two
conditions are equivalent:
\begin{itemize}
\item[(i)]
 $\{f_n\}$ is weakly
 converging to $0$;
\item[(ii)] there exists $C>0$ independent of $n$ such that 
$\|f_n\|_{\cB^2_\pi}\le C$,  and $f_n\to
0$ uniformly on compact subsets.
\end{itemize}
\end{lem}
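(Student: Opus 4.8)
The plan is to prove the two implications separately, using throughout the reproducing kernel structure of $\cB^2_\pi$. Recall that $f(w)=\la f\,|\,K_w\ra$ for $f\in\cB^2_\pi$ and $w\in\bbC$, and that $\{K_n:\,n\in\bbZ\}$ is an orthonormal basis of $\cB^2_\pi$; in particular, by Parseval,
$$
\la f\,|\,g\ra=\sum_{k\in\bbZ}\la f\,|\,K_k\ra\,\ov{\la g\,|\,K_k\ra}=\sum_{k\in\bbZ}f(k)\,\ov{g(k)}
$$
for all $f,g\in\cB^2_\pi$, which is the polarized form of the Whittaker--Kotelnikov--Shannon identity~\eqref{Shannon:eq}.

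For $(i)\Rightarrow(ii)$, I would first invoke the uniform boundedness principle: since $\la f_n\,|\,g\ra$ converges (hence is bounded) for every $g\in\cB^2_\pi$, it follows that $C:=\sup_n\|f_n\|_{\cB^2_\pi}<\infty$. Testing against the fixed vector $K_w$ then gives $f_n(w)=\la f_n\,|\,K_w\ra\to0$ for every $w\in\bbC$, i.e.\ $f_n\to0$ pointwise. To upgrade this to uniform convergence on compact sets, I would observe that $\{f_n\}$ is equicontinuous on any compact $Q\subseteq\bbC$: for $z,z'\in Q$,
$$
|f_n(z)-f_n(z')|=|\la f_n\,|\,K_z-K_{z'}\ra|\le C\,\|K_z-K_{z'}\|_{\cB^2_\pi},
$$
and $w\mapsto K_w$ is continuous from $\bbC$ into $\cB^2_\pi$ because $\|K_w-K_{w'}\|_{\cB^2_\pi}^2=K(w,w)+K(w',w')-2\Re K(w,w')$ with $(z,w)\mapsto K(z,w)$ continuous. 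By Arzel\`a--Ascoli, every subsequence of $\{f_n\}$ admits a further subsequence converging uniformly on compact sets, and the already established pointwise convergence forces each such limit to be $0$; hence $f_n\to0$ uniformly on compact subsets of $\bbC$, which is $(ii)$.

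For $(ii)\Rightarrow(i)$, I would fix $g\in\cB^2_\pi$ and $\eps>0$ and truncate the Parseval series. Since $\sum_{k}|g(k)|^2=\|g\|_{\cB^2_\pi}^2<\infty$, pick $N$ with $\big(\sum_{|k|>N}|g(k)|^2\big)^{1/2}<\eps$. Then, by Cauchy--Schwarz and the uniform bound from $(ii)$,
$$
|\la f_n\,|\,g\ra|\le\sum_{|k|\le N}|f_n(k)|\,|g(k)|+\|f_n\|_{\cB^2_\pi}\Big(\sum_{|k|>N}|g(k)|^2\Big)^{1/2}\le\sum_{|k|\le N}|f_n(k)|\,|g(k)|+C\eps.
$$
The remaining finite sum tends to $0$ as $n\to\infty$, since $f_n(k)\to0$ at each of the finitely many integers $k$ with $|k|\le N$; thus $\limsup_n|\la f_n\,|\,g\ra|\le C\eps$, and letting $\eps\to0$ gives $\la f_n\,|\,g\ra\to0$ for every $g\in\cB^2_\pi$, i.e.\ $f_n\rightharpoonup0$.

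The only step requiring more than routine bookkeeping is the passage from pointwise to locally uniform convergence in $(i)\Rightarrow(ii)$; there the uniform norm bound from Banach--Steinhaus supplies exactly the equicontinuity needed for the normal-families argument. Alternatively, one can bypass Arzel\`a--Ascoli by estimating $f_n$ directly via the sampling series~\eqref{repr-form:eq}, $f_n(z)=\sum_k f_n(k)\sinc(z-k)$, splitting the sum into $|k|\le N$ and $|k|>N$ and using that $\sum_k|\sinc(z-k)|^2=\|K_z\|_{\cB^2_\pi}^2$ is bounded on compact sets, exactly as in the proof of $(ii)\Rightarrow(i)$.
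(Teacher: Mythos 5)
Your proof is correct. It differs from the paper's argument in the execution of both implications, though both rest on the same reproducing-kernel/sampling structure of $\cB^2_\pi$. For $(i)\Rightarrow(ii)$, after Banach--Steinhaus the paper does not pass through pointwise convergence plus Arzel\`a--Ascoli: it estimates $f_n$ directly on a fixed compact set via the sampling expansion $f_n(z)=\sum_j f_n(j)\sinc(z-j)$, splitting into $|j|\le j_E$ (finitely many terms, each $|f_n(j)|=|\la f_n\,|\,K_j\ra|\to0$ by weak convergence) and $|j|>j_E$ (controlled by Cauchy--Schwarz, the norm bound and the decay of $\sinc(z-j)$ on the compact set); this is exactly the ``alternative'' you sketch at the end, so your normal-families argument is a legitimate but slightly heavier substitute, whose only extra ingredients are the norm-continuity of $w\mapsto K_w$ and a subsequence extraction. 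For $(ii)\Rightarrow(i)$, the paper argues on the real line: it picks a compact $E\subseteq\bbR$ with $\|\chi_{E^c}g_0\|_{L^2}<\eps$ and splits $\int_\bbR f_n g$ into $\int_E$ (small by uniform convergence on $E$) and $\int_{E^c}$ (small by Cauchy--Schwarz and the norm bound), whereas you discretize via the Shannon--Parseval identity $\la f_n\,|\,g\ra=\sum_k f_n(k)\ov{g(k)}$ and truncate in $k$; the two truncations are exact analogues of one another (continuous versus discrete), and your version has the minor advantage of only using convergence of $f_n$ at the integers, while the paper's avoids invoking the orthonormal-basis identity a second time. No gaps; both directions are sound.
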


\proof
$(i) \Rightarrow (ii)$. The uniform boundedness principle gives that
there exists $C>0$ such that 
$\|f_n\|_{\cB^2_\pi}\le C$ for every $n\in\bbN$. For $j\in\bbZ$, let $K_j
: = \sinc(\cdot-j)$.
Then $\{K_j\}$ is an orthonormal basis for $\cB^2_\pi$ (see \eqref{repr-form:eq}).
Fix a compact set $E\subseteq\bbC$ and $\eps>0$.  Then,
there exists $j_E$ such that if $|j|>j_E$ and $z\in E $ we have
$$
|K_j(z)|\le \frac{C}{|j|} \quad\text{and}\quad C^2\sum_{|j|>j_E} \frac{1}{j^2}<\eps.
$$
Then, for $z\in E$
\begin{align*}
|f_n(z)| 
& \le \sum_{|j|\le j_E} |f_n(j) \sinc(z-j)| + \sum_{|j|> j_E} |f_n(j) \sinc(z-j) |\\
& \le \sum_{|j|\le j_E} |f_n(j)| + \|f_n\|_{\cB^2_\pi} \sum_{|j|> j_E}
 \frac{C^2}{j^2} \\
 & \le \sum_{|j|\le j_E} |\la f_n \,|\, \sinc(\cdot-j)\ra| +\eps.
\end{align*}
Now, since $\{f_n\}$  weakly
 converges to $0$, we can select a positive integer $n_\eps$ large enough so that if
$n\ge n_\eps$
$$
\sum_{|j|\le j_E} |\la f_n \,|\, \sinc(\cdot-j)\ra| < \eps.
$$
This shows that $(i) \Rightarrow (ii)$. \ms

Conversely, let $g\in\cB^2_\pi$, $\eps>0$ be given.   Let
$E\subseteq\bbR$ be a compact set such that $\|
\chi_{E^c}g_0\|_{L^2(\bbR)}<\eps$. Then, 
\begin{align*}
  |\la f_n\,|\, g\ra | &
  \le \bigg( \int_E +\int_{E^c} \bigg) |f_n(x)g(x)|\, dx\\
& \le \sup_{x\in E}|f_n(x)| |E|^{1/2} \|g\|_{\cB^2_\pi} + C \|\chi_{E^c}g_0\|_{L^2(\bbR)}
\end{align*}
By selecting $n$ large enough so that $\sup_{x\in E}|f_n(x)|
|E|^{1/2}<\eps$, the conclusion follows.
\qed

\ms

\section{Final remarks}\label{final-sec}

The Bernstein spaces are particular cases of de Branges spaces, see,
for example,~\cite{dB} and~\cite{Romanov}. Indeed the entire function
$e^{-i\pi z}$ is a Hermite Biehler function and, if $1\leq p\leq \infty, $
$$
\cB^p_\pi=\left\lbrace f \text{ entire } :  f(z)/e^{-i\pi z},\  f^{\#}(z)/e^{-i\pi z}  \in H^p(\bbC^+)\right\rbrace=\cH^p(e^{-i\pi\cdot}) .
$$  
 If $E$ is any Hermite Biehler and 
 $\alpha \in [0,1)$,  we observe that, when $z \in \bbC^+$,
$$
\Re\left( \frac{e^{i\pi \alpha} E(z)+e^{-i\pi \alpha}E^\#(z)}{e^{i\pi \alpha} E(z)-e^{-i\pi \alpha}E^\#(z)}\right)> 0 .
$$
Therefore, thanks to the Herglotz theorem~\cite[Theorem 4]{dB}, there exists a positive, Poisson integrable, Borel measure $\sigma_\alpha$ such that
$$
\frac{e^{i\pi \alpha} E(z)+e^{-i\pi \alpha}E^\#(z)}{e^{i\pi \alpha} E(z)-e^{-i\pi \alpha}E^\#(z)}=ip_\alpha z+c_{\alpha}+\int_{\bbR}\left( \frac{1}{t-z}-\frac{t}{1+t^2}\right)d\sigma_\alpha(t).
$$
The measures $\sigma_\alpha$ are the Clark measures associated to
$E$. For a complete description of these measures, we refer
to~\cite[Chapter 9]{Cauchytransform} and to
~\cite{clarkintroduction}. 

If $E(z)=e^{-i\pi z}$, we are able to compute explicitely the  associated Clark measures. Indeed
$$
\supp{\sigma_\alpha}=\left\lbrace  t \in \mathbb{R} : e^{-i\pi \alpha}e^{i\pi t}=e^{i\pi \alpha}e^{-i\pi t}	\right\rbrace= \left\lbrace  t \in \mathbb{R} : e^{2i\pi t}=e^{2i\pi \alpha}	\right\rbrace =\left\lbrace  t=n+\alpha  : n \in \mathbb{Z} \right\rbrace=:\mathbb{Z}_\alpha
$$
and 
$$
\sigma_\alpha(n+\alpha)=\frac{1}{|(e^{-i \pi (\cdot )})'|}=\frac{1}{\pi}.
$$
It follows that the space $\cX_\alpha$ can be equivalently described through the Clark measures. 
\begin{cor}
Let $\alpha\in[0,1)$. Then
$$
    \cX_\alpha  = \Big\{ f\in \cE_\pi:\,    f(iy) =o(|y|e^{\pi|y|})\ 
      \text{for}\ |y|\to\infty, \,  f(\cdot)/e^{-i\pi (\cdot)} \in \BMO(\sigma_\alpha) \Big\}\Big/\operatorname{span}\{e^{\pm
        i\pi(\cdot)}\} ,
      $$
      with norm
      $$
      \|f\|_{\cX_\alpha}=  \pi \| f(\cdot)/e^{-i\pi (\cdot)}\|_{\BMO(\sigma_\alpha)} .      
      $$
\end{cor}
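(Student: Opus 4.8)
The plan is to reduce the claimed description of $\cX_\alpha$ to the one already in force, using the explicit form of the Clark measure obtained just above. Since the two descriptions of $\cX_\alpha$ agree on the ambient condition $f\in\cE_\pi$, on the growth condition $(Y)$ of Lemma~\ref{exp-growth:lem}, and on the quotient by $\operatorname{span}\{e^{\pm i\pi(\cdot)}\}$, it suffices to prove that for $f\in\cE_\pi$ one has
\[
\big(e^{i\pi(\cdot+\alpha)}f(\cdot+\alpha)\big)\big|_{\bbZ}\in\BMO(\bbZ)
\iff
f(\cdot)/e^{-i\pi(\cdot)}\in\BMO(\sigma_\alpha),
\]
together with the norm identity $\|f\|_{\cX_\alpha}=\pi\,\|f(\cdot)/e^{-i\pi(\cdot)}\|_{\BMO(\sigma_\alpha)}$. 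First I would invoke the computation preceding the statement: $\sigma_\alpha$ is the purely atomic measure $\frac1\pi\sum_{n\in\bbZ}\delta_{n+\alpha}$ carried by $\bbZ_\alpha=\{n+\alpha:n\in\bbZ\}$. Hence $\BMO(\sigma_\alpha)$ depends only on the restriction of a function to $\bbZ_\alpha$, so it is intrinsically a space of sequences indexed by $\bbZ_\alpha\cong\bbZ$, taken modulo additive constants.

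Next I would identify $\BMO(\sigma_\alpha)$ with $\BMO(\bbZ)$ under $n+\alpha\leftrightarrow n$. Because $\bbZ_\alpha$ is an arithmetic progression of step $1$, every bounded interval $I\subset\bbR$ meets $\bbZ_\alpha$ in a set $\{n+\alpha:n\in A\}$ with $A\subset\bbZ$ a block of consecutive integers, and every such block arises this way. For a function $g$ on $\bbZ_\alpha$ the $\sigma_\alpha$-mean of $g$ over $I$ equals the ordinary mean of the sequence $\big(g(n+\alpha)\big)_{n}$ over $A$ (the common mass $\tfrac1\pi$ cancels), while the oscillation integral against $\sigma_\alpha$, once normalised by $\sigma_\alpha(I)=\#A/\pi$, yields $\tfrac1\pi$ times $\tfrac1{\#A}\sum_{n\in A}\big|g(n+\alpha)-\text{mean over }A\big|$; passing to the supremum over $I$, equivalently over $A$, gives $\pi\,\|g\|_{\BMO(\sigma_\alpha)}=\big\|\big(g(n+\alpha)\big)_{n}\big\|_{\BMO(\bbZ)}$. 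Applying this to $g=f(\cdot)/e^{-i\pi(\cdot)}=e^{i\pi(\cdot)}f$, which is entire, gives $g(n+\alpha)=e^{i\pi(n+\alpha)}f(n+\alpha)=e^{i\pi\alpha}(-1)^nf(n+\alpha)$; since $|e^{i\pi\alpha}|=1$, multiplication by $e^{i\pi\alpha}$ does not change the $\BMO(\bbZ)$-norm, so $\pi\,\|f(\cdot)/e^{-i\pi(\cdot)}\|_{\BMO(\sigma_\alpha)}=\big\|\big((-1)^nf(n+\alpha)\big)_{n}\big\|_{\BMO(\bbZ)}$, which by \eqref{norm-id-Xalpha} is $\|f\|_{\cX_\alpha}$. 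The equivalence of the two conditions and the norm identity follow; the quotients match because for $f=e^{\pm i\pi(\cdot)}$ the function $f(\cdot)/e^{-i\pi(\cdot)}$ is constant on $\bbZ_\alpha$, so $\operatorname{span}\{e^{\pm i\pi(\cdot)}\}$ maps onto the constants, which is precisely what $\BMO(\sigma_\alpha)$ factors out.

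The only genuinely delicate point is the bookkeeping of constants in the middle step: one must verify that the supremum defining $\BMO(\sigma_\alpha)$ over subintervals of $\bbR$ reproduces exactly the supremum over blocks of $\bbZ$ appearing in the definition of $\BMO(\bbZ)$, and that the normalisation by $\sigma_\alpha(I)$ rather than by $\#A$ introduces precisely the factor $\pi$ asserted in the statement (and not merely a comparable constant). Everything else is a direct translation of definitions, and no new analytic input beyond the explicit formula for $\sigma_\alpha$ is required.
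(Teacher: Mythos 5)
Your overall route is the same as the paper's: the paper, too, simply computes the Clark measure of $e^{-i\pi z}$ explicitly ($\supp\sigma_\alpha=\bbZ_\alpha$, each atom of mass $1/\pi$) and then reads the corollary off as a translation of the definition of $\cX_\alpha$; your reduction to the identity \eqref{norm-id-Xalpha}, the observation that an interval of $\bbR$ meets $\bbZ_\alpha$ exactly in a block of consecutive integers, the unimodular factor $e^{i\pi\alpha}$, and the remark that $\operatorname{span}\{e^{\pm i\pi(\cdot)}\}$ restricts to constant sequences on $\bbZ_\alpha$ (hence is absorbed by the quotient modulo constants in $\BMO$) are exactly the intended argument, and the growth condition $(Y)$ is common to both descriptions, so nothing more is needed there.

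There is, however, a slip precisely at the point you yourself flag as delicate, and as written your computation does not produce the constant $\pi$ in the statement. With the normalization you adopt, namely $\|g\|_{\BMO(\sigma_\alpha)}=\sup_I\frac{1}{\sigma_\alpha(I)}\int_I|g-g_{I,\sigma_\alpha}|\,d\sigma_\alpha$, the atom masses cancel: for an interval $I$ meeting $\bbZ_\alpha$ in $\{n+\alpha:n\in A\}$ one has $\frac{1}{\sigma_\alpha(I)}\int_I|g-g_{I,\sigma_\alpha}|\,d\sigma_\alpha=\frac{\pi}{\#A}\cdot\frac{1}{\pi}\sum_{n\in A}|g(n+\alpha)-g_A|=\frac{1}{\#A}\sum_{n\in A}|g(n+\alpha)-g_A|$, and not $\frac1\pi$ times that quantity as you assert. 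Under this convention one therefore gets $\|g\|_{\BMO(\sigma_\alpha)}=\big\|\big(g(n+\alpha)\big)_n\big\|_{\BMO(\bbZ)}$ on the nose, i.e.\ $\|f\|_{\cX_\alpha}=\|f(\cdot)/e^{-i\pi(\cdot)}\|_{\BMO(\sigma_\alpha)}$ with no factor $\pi$. The factor $\pi$ claimed in the corollary can only be exact under a different convention for $\BMO(\sigma_\alpha)$ (for instance, integrating against $\sigma_\alpha$ but normalizing by the number of atoms of $I$ rather than by $\sigma_\alpha(I)$, so that the mass $1/\pi$ survives), and neither your proof nor the corollary records which definition is in force. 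To close the gap you must either state the definition of $\BMO(\sigma_\alpha)$ under which the identity holds with the constant $\pi$ and redo the two-line computation accordingly, or prove the identity with the constant that your chosen normalization actually yields; manufacturing the $\pi$ by an incorrect cancellation, as in the current draft, is not an option. Everything else in your argument is correct and is all that the paper itself uses.
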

This last description of the space $\cX_\alpha$ is similar to the one presented in ~\cite[Theorem 2]{Bessonov}.

\ms

Some of the results presented in this work, hold also while describing the dual of $\cH^1(E)$ when $E(z)$ is a general Hermite-Biehler function. We will present these theorems in a different article. 
Nevertheless, the Bernstein space $\cB^1_\pi$ satisfies some exceptional properties: because $H^1(\bbZ)$ is isomorphic to $\cB^1_\pi$, we are able to provide an atomic description for $\cB^1_\pi$.
We first recall the definition of the atoms of $H^1(\bbZ)$.
\begin{defn}{\rm 
A sequence  $\alpha=(\alpha_n)$ is an atom of $H^1(\bbZ)$ if it
satisfies these three conditions:
\begin{itemize}
  \tbi
  $\alpha$ has compact (i.e., finite) support, contained in an
  interval $A$;
  \tbi $\sup_n |\alpha_n|\le 1/\#A$, where $\# A$ denotes the cardinality
  of $A$;
  \tbi $\alpha$ has mean value 0, that is, $\sum_{n\in I} \alpha_n=0$. 
\end{itemize}  
}
\end{defn}

\begin{defn}{\rm
The atoms $a$ of the space $\cB^1_\pi$ are the functions
\begin{equation}
\label{atom B1pi}
a(z):= \sum_n (-1)^n \alpha_n \ \sinc(\pi(z-n))\ ,
\end{equation}
where $(\alpha_n)$ is an atom of $H^1(\bbZ)$. }
\end{defn}

If $f \in \cB^1_\pi$, the sequence $(f_n)$, where
$f_n:=(-1)^nf(n)$ belongs to $H^1(\bbZ)$. Consequently, for every
$\eps >0$, there exists a family of atoms $\{\alpha^j\}$, $\alpha^j =
(\alpha^j_n)$ such that
\begin{align*}
\|(f_n) -\sum_j \lambda_j (\alpha^j) \|_{H^1(\bbZ)}< \eps  .
\end{align*}
 From~\cite{Eoff}, we know that $\widetilde
T_0: H^1(\bbZ)\to\cB^1_\pi$ is an isomorphism. It follows that  
\begin{align*}
\Big\| f-\sum_{j} \lambda_j\Big(\sum_{n}(-1)^n\alpha^j_n\ \sinc(\pi(\cdot -n))\Big) \Big\|_{\cB^1_\pi}&\le C \|(f_n) -\sum_j \lambda_j (\alpha^j) \|_{H^1(\bbZ)}< C\eps .
\end{align*}

\ms Another interstenting properties that we will study in the future,
is the atomic decomposition of the de Branges space $\cH^1(E)$. Indeed
it will be interesteng to characterize for which Hermite-Biehler
function, the space $\cH^1(E)$ admits an atomic decomposition. We
think that the one-component Hermite-Biehler functions, used for example in
~\cite{Barbernstein, Bessonov}, will play a fundamental role.

\bibliography{dualB1}
\bibliographystyle{amsalpha}
\vspace{-.1cm}

\end{document}